\documentclass[11pt]{amsart}
\usepackage{amssymb,booktabs,caption,enumitem,mathdots,mathtools,nicematrix,tikz}
\usepackage[colorlinks,allcolors=blue]{hyperref}
\usepackage[margin=1.5in]{geometry}

\makeatletter
\g@addto@macro\bfseries{\boldmath}
\makeatother

\makeatletter
\newlength{\@thlabel@width}%
\newcommand{\thmenumhspace}{\settowidth{\@thlabel@width}{\itshape1.}\sbox{\@labels}{\unhbox\@labels\hspace{\dimexpr-\leftmargin+\labelindent/5+\labelsep+\@thlabel@width-\itemindent}}}
\makeatother

\newtheorem{theorem}{Theorem}
\newtheorem{corollary}[theorem]{Corollary}
\newtheorem{lemma}[theorem]{Lemma}
\newtheorem{proposition}[theorem]{Proposition}

\newcommand{\thistheoremname}{}
\newtheorem*{genericthm*}{\thistheoremname}
\newenvironment{namedthm*}[1]
 {\renewcommand{\thistheoremname}{#1}
 \begin{genericthm*}}
 {\end{genericthm*}}

\theoremstyle{definition}
\newtheorem{definition}[theorem]{Definition}

\theoremstyle{remark}
\newtheorem{example}[theorem]{Example}
\newtheorem{remark}[theorem]{Remark}

\numberwithin{theorem}{section}

\DeclareMathOperator{\Area}{Area}
\DeclareMathOperator{\charac}{char}
\DeclareMathOperator{\GW}{GW}
\DeclareMathOperator{\order}{ord}
\DeclareMathOperator{\rank}{rank}
\DeclareMathOperator{\Res}{Res}
\DeclareMathOperator{\supp}{supp}

\newcommand{\vareps}{\varepsilon}
\newcommand\A[1]{\mathbb A^{#1}}
\newcommand{\N}{\nu}
\newcommand{\Q}{\mathbb{Q}}
\newcommand{\C}{\mathbb{C}}
\newcommand{\R}{\mathbb{R}}
\newcommand{\B}{\mathcal{B}}
\newcommand{\Z}{\mathbb{Z}}
\newcommand{\m}{{\mathfrak{m}}}
\newcommand{\Ac}{{\mathcal{A}}}
\newcommand{\eps}{\varepsilon}
\newcommand{\Npoly}{\Gamma}
\newcommand{\nuf}{\nu(f)}
\newcommand{\nug}{\nu(g)}
\newcommand{\hyp}{\mathbb{H}}
\newcommand{\E}[1]{{E}_0(\nabla{#1})}

\begin{document}

\title[Characterization of $\mathbb{A}^1$-Milnor numbers of small rank]{Characterization of $\mathbb{A}^1$-Milnor numbers\\ of small rank}

\author{C-Y. Jean Chan}
\address[C-Y.~J.~Chan]{Department of Mathematics\\ Central Michigan University, Mt. Pleasant, MI 48859\\ USA 
}
\email{chan1cj@cmich.edu}

\author{Thomas Hagedorn}
\address[T.~Hagedorn]{
Department of Mathematics and Statistics\\ The College of New Jersey\\ 2000 Pennington Road\\ Ewing, NJ 08618\\ USA
}
\email{hagedorn@tcnj.edu}

\author{Joel Louwsma}
\address[J.~Louwsma]{Department of Mathematics\\ Niagara University\\ Niagara University, NY 14109\\ USA}
\email{jlouwsma@niagara.edu}

\author{Michael Wijaya}
\address[M.~Wijaya]{Trinity School\\ New York, NY 10024\\ USA}
\email{michael.wijaya@trinityschoolnyc.org}

\begin{abstract}
The $\mathbb{A}^1$-Milnor number of a polynomial map is a quadratic enrichment of the classical Milnor number, defined as the local $\mathbb{A}^1$-Brouwer degree of the gradient. We study which elements of the Grothendieck--Witt ring of nondegenerate symmetric bilinear forms arise as $\mathbb{A}^1$-Milnor numbers. Over fields of characteristic not $2$, $3$, or~$5$, we show that every element of the Grothendieck--Witt ring of rank at most~$7$ that is a local $\mathbb{A}^1$-Brouwer degree is also an $\mathbb{A}^1$-Milnor number. Unlike for local $\mathbb{A}^1$-Brouwer degrees, however, in general not all $\mathbb{A}^1$-Milnor numbers can be obtained from two-variable polynomials. Over an arbitrary field of characteristic not $2$, $3$, $5$, or~$7$, we characterize the elements of the Grothendieck--Witt ring of rank at most~$8$ that are $\mathbb{A}^1$-Milnor numbers of two-variable polynomials. Over subfields of~$\mathbb{R}$, this implies that there are local $\mathbb{A}^1$-Brouwer degrees of ranks $4$, $6$, and~$8$ that are not the $\mathbb{A}^1$-Milnor number of a two-variable polynomial. We also prove that under a mild hypothesis one can reduce the number of variables of a polynomial while preserving the $\mathbb{A}^1$-Milnor number up to multiplication by a unit.
\end{abstract}

\maketitle

\section{Introduction}

The classical Milnor number, originally defined in~\cite{Milnor}, is an important invariant that measures the complexity of an isolated singularity of a complex algebraic hypersurface. Over~$\C$, the Milnor number of an isolated singularity on a hypersurface defined by $f=0$ is the local Brouwer degree of the gradient $\nabla f$ at that point. The celebrated $\mathbb{A}^1$-homotopy theory of Morel and Voevodsky \cite{Morel, MorelVoevodsky} has allowed the generalization of many topological invariants, including Milnor numbers and Brouwer degrees. These enriched invariants are defined over an arbitrary field~$k$ and take values in the Grothendieck--Witt ring $\GW(k)$ of nondegenerate symmetric bilinear forms over~$k$. 

Let $f$ be a polynomial in $n$~variables with $f(0)=0$, defining a map $f\colon \A{n}_k\to\A1_k$. The $\mathbb{A}^1$-Milnor number of~$f$ is defined in~\cite{KassWickelgren2019} as the local $\mathbb{A}^1$-Brouwer degree of $\nabla f$, i.e.
\[\mu_0^{\A1}(f)= \deg^{\A1}_0 (\nabla f).\]
$\mathbb{A}^1$-Brouwer degrees have recently been studied in several papers, including \cite{KassWickelgren2019,QuickStrandWilson,BrazeltonMcKeanPauli}, and $\mathbb{A}^1$-Milnor numbers have been studied in \cite[Section~6]{KassWickelgren2019}, \cite[Section~6]{PauliWickelgren}, and \cite[Section~3]{Pauli}. They have applications in $\mathbb{A}^1$-enumerative geometry through work described in \cite{Levine,KassWickelgren2021}. A \emph{Macaulay2} package for computing $\mathbb{A}^1$-Brouwer degrees has also been developed (see \cite{BorisovEtAl,Atherton}). 

This paper studies which elements of the Grothendieck--Witt ring $\GW(k)$ arise as $\mathbb{A}^1$-Milnor numbers. The analogous question for local $\A1$-Brouwer degrees was previously studied by Quick, Strand, and Wilson~\cite{QuickStrandWilson}. Their main results characterize the elements of the Grothendieck--Witt ring of rank at most~$7$ that are realized as local $\A1$-Brouwer degrees. In particular, not every element of $\GW(k)$ that is represented by a symmetric bilinear form is a local $\A1$-Brouwer degree. Since every $\A1$-Milnor number is a local $\A1$-Brouwer degree, it is natural to ask whether every local $\A1$-Brouwer degree is an $\A1$-Milnor number. In other words, does every local $\A1$-Brouwer degree arise as the local $\A1$-Brouwer degree \emph{of a gradient}? We answer this question in the affirmative for rank at most~$7$ over fields of characteristic 0 and most fields of positive characteristic by explicitly constructing three-variable polynomials whose $\A1$-Milnor numbers give all of the local $\A1$-Brouwer degrees found in~\cite{QuickStrandWilson}. 

\begin{namedthm*}{Theorem~\ref{thm:allarise2}}
Let $k$ be a field of characteristic not $2$, $3$, or~$5$. Every element of $\GW(k)$ of rank at most~$7$ that is a local $\A1$-Brouwer degree is also the $\A1$-Milnor number of a polynomial map $\A{3}_k\to \A1_k$. 
\end{namedthm*}

The main focus of this paper is on characterizing the elements of the Gro\-then\-dieck--Witt ring of rank at most~$8$ that arise as $\A1$-Milnor numbers of two-variable polynomials. The examples given in Section~\ref{sec:allarise} show that, in the cases of ranks $1$, $2$, $3$, $5$, and~$7$ and fields of characteristic not $2$, $3$, or~$5$, all elements of the Grothendieck--Witt ring that are local $\A1$-Brouwer degrees are also $\A1$-Milnor numbers of two-variable polynomials (Theorem~\ref{thm:allarise}(\ref{part:allariseeasyranks})). Subsequent sections thus restrict attention to ranks $4$, $6$, and~$8$, and the $\A1$-Milnor numbers of two-variable polynomials of these ranks are eventually characterized in Section~\ref{sec:classifyA1Milnor}. This is recorded in our main theorem.

\begin{namedthm*}{Theorem~\ref{thm:characterization}}
Let $k$ be a field of characteristic not $2$, $3$, $5$, or~$7$. The elements of $\GW(k)$ of rank at most~$8$ that are $\A1$-Milnor numbers of polynomial maps $\A2_k\to\A1_k$ are as follows. 
\begin{center}
\begin{tabular}{cc}
\toprule
Rank & $\A1$-Milnor numbers\\
\midrule
1 & $\langle b\rangle$\\
2 & $\hyp$\\
3 & $\hyp+\langle b\rangle$\\
4 & $\hyp+\langle-A, AR \rangle$\\
5 & $2\hyp+\langle b\rangle$\\
6 & $2\hyp+\langle-2,b\rangle$\\
7 & $3\hyp+\langle b\rangle$\\
8 & $3\hyp+\langle -2,b\rangle$\\
\bottomrule
\end{tabular}
\end{center}
Here $\hyp$ is the hyperbolic form, $b\in k^{\times}$, and in the rank~$4$ case the constants $A$ and~$R$ are defined by $A=2a_{2,1}^2 - 6a_{3,0}a_{1,2}$ and
\[R=3(27a_{3,0}^2a_{0,3}^2 - 18a_{3,0}a_{2,1}a_{1,2}a_{0,3} + 4a_{2,1}^3a_{0,3} + 4a_{3,0}a_{1,2}^3 - a_{2,1}^2a_{1,2}^2),\]
where $a_{0,3}\neq0$ and either $a_{2,1}$ or~$a_{1,2}$ is nonzero, with the requirement that $A$ and~$R$ are both nonzero.
\end{namedthm*}

In~\cite{QuickStrandWilson}, it is shown that all elements of $\GW(k)$ of rank at most~$7$ that arise as local $\A1$-Brouwer degrees do so from polynomial maps $\mathbb{A}_k^2\to\mathbb{A}_k^2$. It is natural to ask whether the analogous statement holds for $\A1$-Milnor numbers, i.e.\ whether Theorem~\ref{thm:allarise2} can be strengthened to require a polynomial map $f\colon\A{2}_k\to \A1_k$. However, it is a consequence of Theorem~\ref{thm:characterization} that this is not the case: Over many fields, there exist local $\A1$-Brouwer degrees and $\A1$-Milnor numbers that are not $\A1$-Milnor numbers of two-variable polynomials. 

\begin{namedthm*}{Theorem~\ref{thm:notallarise}}
Let $k$ be a subfield of~$\R$. 
\begin{enumerate}[label=\textup{(\alph*)},ref=\textup{\alph*}]
\item There exist elements of $\GW(k)$ of ranks $4$, $6$, and~$8$ that are local $\A1$-Brouwer degrees of polynomial maps $\A2_k\to\A2_k$ but are not $\A1$-Milnor numbers of polynomial maps $\A2_k\to\mathbb{A}_k^1$.
\item There exist elements of $\GW(k)$ of ranks $4$, $6$, and~$8$ that are $\A1$-Milnor numbers of polynomial maps $\A{3}_k\to\mathbb{A}_k^1$ but are not $\A1$-Milnor numbers of polynomial maps $\A2_k\to\mathbb{A}_k^1$.
\end{enumerate}
\end{namedthm*}

Work of Kass and Wickelgren~\cite{KassWickelgren2019} makes computing local $\A1$-Brouwer degrees tractable by showing that they are equal to \emph{EKL forms}, which are elements of the Grothendieck--Witt ring previously defined by Eisenbud and Levine~\cite{EisenbudLevine} and Khimshiashvili~\cite{Khimshiashvili}. Our approach to finding $\A1$-Milnor numbers follows that of \cite{KassWickelgren2019, Pauli, QuickStrandWilson} by computing $\A1$-Brouwer degrees as EKL forms on a local algebra. This differs from the approach to finding $\A1$-Brouwer degrees taken in~\cite{BrazeltonMcKeanPauli,BorisovEtAl}, which, instead of localization, uses saturation and computes B\'{e}zoutian bilinear forms. 

Our approach to characterizing $\A1$-Milnor numbers of two-variable polynomials uses the Newton diagram of a polynomial, which has previously been used to study classical Milnor numbers by a number of authors; see~\cite{GreuelLossenShustin} and the references therein. A result of Quick, Strand, and Wilson \cite[Lemma~2.9]{QuickStrandWilson} allows us to restrict attention to polynomials that are convenient in the sense that they include terms $x^i$ and~$y^j$ for sufficiently large $i$ and~$j$. A fundamental result of Kushnirenko\footnote{We use the preferred transliteration of Kushnirenko's name as indicated in~\cite{Kushnirenko-correspondence}.} \cite[Th\'eor\`eme~I]{Kushnirenko} shows that the rank of the $\A1$-Milnor number of a polynomial is bounded below by its Newton number, thus allowing us to restrict attention to polynomials with Newton number at most~$8$. Section~\ref{sec:Newtonbackground} gives background about Newton diagrams and Newton numbers. Section~\ref{sec:classifynewton} then catalogs the Newton diagrams that have Newton number at most~$8$. 

In Section~\ref{sec:change}, we study the behavior of $\A1$-Milnor numbers under certain changes of variables. Theorem~\ref{thm:rightequivalence} proves, with mild hypotheses, that $\A1$-Milnor numbers are invariant under right equivalence. Proposition~\ref{prop:increaseNewton} proves that, under certain hypotheses, a two-variable Newton degenerate polynomial has the same $\A1$-Milnor number as one with larger Newton number. As a consequence, Theorem~\ref{thm:increaseNewton} shows that, if a two-variable polynomial is Newton degenerate and has $\A1$-Milnor number of rank at most~$8$, then it has the same $\A1$-Milnor number as a polynomial that is Newton nondegenerate. It follows that all $\A1$-Milnor numbers of rank $r$ at most~$8$ arise from polynomials with Newton number~$r$.

Section~\ref{sec:classifyA1Milnor} directly computes the $\A1$-Milnor number of each Newton nondegenerate polynomial with Newton number $4$, $6$, or~$8$, thus finding the elements of the Grothendieck--Witt ring of rank $4$, $6$, and~$8$ that are $\A1$-Milnor numbers of two-variable polynomials and completing the characterization given in Theorem~\ref{thm:characterization}. 

Section~\ref{sec:specificfields} considers the special case when $k$ is a subfield of~$\R$. Theorem~\ref{thm:possiblesignatures} shows that there is no $\A1$-Milnor number of a two-variable polynomial with rank $4$, $6$, or~$8$ and signature~$2$; as there are local $\A1$-Brouwer degrees with these invariants, this implies Theorem~\ref{thm:notallarise}. The rest of Section~\ref{sec:specificfields} characterizes the elements of $\GW(\Q)$ of rank $6$ and~$8$ that arise as $\A1$-Milnor numbers in terms of their invariants, as given in Corollaries \ref{cor:rank6invariants} and~\ref{cor:rank8invariants}.

In Section~\ref{sec:variablereduction}, we prove a reduction theorem that may be of independent interest to some readers. This follows from strategically applying Theorem~\ref{thm:rightequivalence} to show, under mild hypotheses, that one can reduce the number of variables of a polynomial while only changing the $\A1$-Milnor number by multiplication by a unit. 

\begin{namedthm*}{Theorem~\ref{thm:variablereduction}}
Let $k$ be a field of characteristic not~$2$, let $f\colon\mathbb A_k^n \to \mathbb A_k^1$ be a polynomial map with $f(0)=0$ and $1\leq\mu_0(f)<\infty$, and let $\mathfrak{m}=(x_1, \dots, x_{n})$. If $(f_{x_1}, \dots, f_{x_n})\nsubseteq \mathfrak{m}^2$, then there exists a map $g\colon \mathbb A_k^{n-1} \to \mathbb A_k^1$ with $g(0)=0$ and $1\leq\mu_0(g)<\infty$ such that 
\begin{enumerate}[label=\textup{(\alph*)},ref=\textup{\alph*}]\item $Q_0(\nabla f)\cong Q_0(\nabla g)$,
\item $\mu_0(f)=\mu_0(g)$, and 
\item $\mu^{\mathbb{A}^1}_0(f)$ and $\mu^{\mathbb{A}^1}_0(g)$ differ only by multiplication by a unit.
\end{enumerate}
\end{namedthm*}

This is the analogue for $\A1$-Milnor numbers of a reduction theorem for local $\A1$-Brouwer degrees shown by Quick, Strand, and Wilson \cite[Theorem~2.12]{QuickStrandWilson}. Similar reduction techniques were also used in McKean's calculation of the local $\A1$-Brouwer degree of the intersection of two plane curves \cite[Lemma~5.7]{McKean}.

\section{Preliminaries}\label{sec:preliminaries}

In this section, we recall some facts about Milnor numbers and symmetric bilinear forms and prove some basic properties of $\A1$-Milnor numbers. (See also Ormsby's notes~\cite{Ormsby} for a good introduction.)

\subsection{Isolated singularities and Milnor numbers}
Isolated singularities and Milnor numbers can be studied from both algebraic and topological perspectives. Milnor's original perspective was topological, but our approach is primarily algebraic. Let $k$ be a field, and let $\mathbb{A}_k^n$ denote affine $n$-space over~$k$. We freely move between polynomials $f\in k[x_1,\dots,x_n]$ and polynomial maps $f\colon\A{n}_k\to\A1_k$ with no distinction. We use $f_{x_i}$ to denote the partial derivative $\frac{\partial f}{\partial x_i}$ when the subscripts do not cause confusion. Let $\mathfrak m=(x_1,\dots,x_n)$ be the maximal ideal of $k[x_1,\dots,x_n]$. 

We recall several standard definitions; see, for example, \cite[Chapter~4]{CoxLittleOShea} and \cite[Section~3.1.1]{GreuelLossenShustin} for further details. Note that these concepts can be equivalently defined using localization of polynomial rings and formal power series rings (see Remark~\ref{rem:localizationpowerseries}). We use localization throughout. A polynomial map $f\colon\A{n}_k\to\A1_k$ has a \emph{singularity} at the origin if $f(0)=0$ and $f_{x_i}(0)=0$ for all $i\in[n]$. The \emph{Tjurina number} of~$f$ at the origin is defined as
\[\tau_0(f)\coloneqq\dim_k k[x_1, \dots, x_n]_{\mathfrak m}/(f, f_{x_1},\dots,f_{x_n});\]
when $k[x_1, \dots, x_n]_{\mathfrak m}/(f, f_{x_1},\dots,f_{x_n})$ is an infinite-dimensional vector space over~$k$, we say $\tau_0(f)=\infty$. Since the units of $k[x_1, \dots, x_n]_{\mathfrak m}$ are represented by the polynomials with nonzero constant term, $\tau_0(f)=0$ if and only if one of $f,f_{x_1},\dots,f_{x_n}$ has a nonzero constant term. Therefore $f$ has a singularity at the origin exactly when $\tau_0(f)\geq 1$. We say $f$ has an \emph{isolated singularity} at the origin if $1\leq\tau_0(f)<\infty$. 

The \emph{Milnor number} of~$f$ at the origin is defined as
\[\mu_0(f)\coloneqq\dim_k k[x_1, \dots, x_n]_{\mathfrak m}/(f_{x_1},\dots,f_{x_n}).\]
When $k[x_1, \dots, x_n]_{\mathfrak m}/( f_{x_1},\dots,f_{x_n})$ is an infinite-dimensional vector space over~$k$, we say $\mu_0(f)=\infty$. Since $\mu_0(f)\geq1$ exactly when $f_{x_i}(0)=0$ for all $i\in[n]$, we have that $f$ has a singularity at the origin if and only if $f(0)=0$ and $\mu_0(f)\geq1$. Since $\tau_0(f)\leq\mu_0(f)$, if $f$ has a singularity at the origin and $\mu_0(f)<\infty$, then $f$ has an isolated singularity at the origin. Therefore $f$ having an isolated singularity at the origin with finite Milnor number is equivalent to the conditions $f(0)=0$ and $1\leq\mu_0(f)<\infty$. 

We say $F\colon\mathbb{A}_k^n\to\mathbb{A}_k^n$ has a \emph{zero} at the origin if $\dim_k k[x_1,\dots,x_n]_{\mathfrak{m}}/(F_{1},\dots,F_{n})\geq1$
and that it has an \emph{isolated zero} at the origin if 
\[1\leq\dim_k k[x_1,\dots,x_n]_{\mathfrak{m}}/(F_{1},\dots,F_{n})<\infty.\]
From this perspective, it is clear that an~$f$ with $f(0)=0$ has an isolated singularity with finite Milnor number at the origin exactly when $\nabla f$ has an isolated zero at the origin. Over~$\C$, the Milnor number of a polynomial map $f\colon\A{n}_\C\to\A1_\C$ coincides with the local Brouwer degree of $\nabla f$. 

Over a field of characteristic~$0$, if $f$ has an isolated singularity at the origin, then $\mu_0(f)$ must be finite. Milnor gave a proof of this over~$\C$ using a topological argument \cite[Theorem~7.1]{Milnor} and indicated an algebraic approach to showing it over an arbitrary field of characteristic~$0$ \cite[Problem~3, p.~115]{Milnor}. The latter result also follows from \cite[Corollary~7.1.4 and Theorem~7.1.5]{HunekeSwanson} (see also \cite[Sections 2 and~3]{HefezRodriuesSalomao}). Over fields of positive characteristic, on the other hand, there are polynomial maps with an isolated singularity at the origin whose Milnor number is infinite. For example, if $\charac k=3$, then the polynomial $f = x^2y + xy^2$ has an isolated singularity at the origin and infinite Milnor number (see \cite[Example~1.3]{HefezRodriuesSalomao}). Under an additional nondegeneracy condition, to be discussed in Section~\ref{sec:Newtonbackground}, the Milnor number can be shown to be finite if and only if the Tjurina number is finite (see Remark~\ref{rem:nondegenerate}).

\subsection{Symmetric bilinear forms and the Grothendieck--Witt ring}\label{subsec:GWring}
In this section, we recall some basic facts about symmetric bilinear forms; see~\cite{Lam} for further details. Let $k$ be a field, and let $V$ be a vector space over~$k$. A symmetric bilinear form on~$V$ is a map $\beta\colon V \times V\to k$ that is $k$-linear in both arguments and satisfies $\beta(u,v)=\beta(v,u)$ for all $u,v\in V$. Two symmetric bilinear forms $\beta_1$ on~$V_1$ and $\beta_2$ on~$V_2$ are $\emph{isometric}$ if there is a linear isomorphism $\Psi\colon V_1 \to V_2$ such that $\beta_2(\Psi(u), \Psi(v)) = \beta_1(u, v)$ for all $u, v \in V_1$. The \emph{Gram matrix} of a symmetric bilinear form~$\beta$ on~$V$ with respect to a basis $\{v_1,\dots,v_r\}$ of~$V$ is the $r\times r$ matrix~$G$ whose $(i,j)$ entry is given by $\beta(v_i,v_j)$; a symmetric bilinear form has a symmetric Gram matrix. We let $\langle b_1, \dots, b_r \rangle$ denote the symmetric bilinear form whose Gram matrix is diagonal with diagonal entries $b_1, \dots, b_r$. If $G_1$ and~$G_2$ are the Gram matrices of symmetric bilinear forms $\beta_1$ and~$\beta_2$, respectively, then $\beta_1$ and~$\beta_2$ are isometric if and only if there exists a nonsingular matrix~$P$ such that $G_1 = P^T G_2 P$. 

When $\charac k\neq 2$, symmetric bilinear forms over~$k$ are equivalent to quadratic forms. More precisely, if $G$ is the Gram matrix of a symmetric bilinear form~$\beta$, then the corresponding quadratic form is given by $q({\mathbf x})=\mathbf x^{T} G \mathbf x$, where $\mathbf x = (x_1, \dots, x_r)^T$ is a column vector of variables. Conversely, given a quadratic form $q(\mathbf{x})$, there is a unique symmetric matrix~$G$ such that $q({\mathbf x})=\mathbf{x}^T G \mathbf{x}$, and this~$G$ determines the corresponding symmetric bilinear form. For example, the Gram matrix of the quadratic form $x^2 - y^2$ is $\begin{bsmallmatrix} 1 & 0 \\ 0 & -1 \end{bsmallmatrix}$ and that of $2xy$ is $\begin{bsmallmatrix} 0 & 1 \\ 1 & 0 \end{bsmallmatrix}$. When $\charac k\neq 2$, a standard change of basis shows that these two forms are isometric. Moreover, they are also both isometric to $\langle b,-b\rangle$ and to the form represented by $\begin{bsmallmatrix} 0 & b \\ b & 0 \end{bsmallmatrix}$ for all $b\in k^{\times}$. 

A symmetric bilinear form~$\beta$ is \emph{nondegenerate} if the only $u\in V$ for which $\beta(u,v)=0$ for all $v\in V$ is $u=0$. Equivalently, $\beta$ is nondegenerate if its Gram matrix with respect to any basis is nonsingular. The \emph{Grothendieck--Witt ring} $\GW(k)$ of a field~$k$ is the group completion of the set of isometry classes of nondegenerate symmetric bilinear forms over~$k$. Let $[\beta]$ denote the isometry class in $\GW(k)$ represented by a nondegenerate symmetric bilinear form~$\beta$. Every element of $\GW(k)$ can be represented as a formal difference $[\beta_1]-[\beta_2]$, where $\beta_1$ and~$\beta_2$ are nondegenerate symmetric bilinear forms. In $\GW(k)$, addition is given by direct sum and multiplication is given by tensor product; these operations correspond to standard direct sums and tensor products of symmetric bilinear forms and their Gram matrices. If $\beta_1$ and~$\beta_2$ are nondegenerate symmetric bilinear forms over~$k$ and $c, b_i\in k^{\times}$, we have
\begin{equation}\label{eq:GWoperations}
\begin{aligned}
[\beta_1]+ [\beta_2]&=
[\beta_1\oplus \beta_2]\\
[\beta_1]\cdot [\beta_2]&=
[\beta_1\otimes \beta_2]\\
\langle c \rangle \cdot 
\langle b_1, b_2, \dots, b_n \rangle
&=
\langle c b_1, c b_2, \dots, 
c b_n \rangle.\\
\end{aligned}
\end{equation}
In the following, we will let $\beta$ denote both a nondegenerate symmetric bilinear form and the class it represents in $\GW(k)$.

The \emph{rank} of a symmetric bilinear form~$\beta$ is the rank of its Gram matrix with respect to any basis. Given $\beta=[\beta_1]- [\beta_2]\in \GW(k)$, defining $\rank \beta=\rank \beta_1-\rank \beta_2$ gives a well-defined notion of rank on $\GW(k)$. 

When $\charac k\neq 2$, every symmetric bilinear form over~$k$ is diagonalizable. Therefore, the structure of $\GW(k)$ can be understood as a $k$-algebra generated by the forms $\langle b \rangle$ for all $b \in k^{\times}$. It satisfies the following properties (see \cite[Section~II.4]{Lam}): 
\begin{enumerate}[label=\textup{(\roman*)},ref=\textup{\roman*}]
\item $\langle b \rangle = \langle b c^2\rangle$ for all $b, c\in k^{\times}$,
\item $\langle bc \rangle = \langle b \rangle \cdot\langle c \rangle$ for all $b, c \in k^{\times}$, and 
\item $\langle b \rangle + \langle c \rangle = \langle b + c \rangle + \langle bc (b + c) \rangle$ for all $b, c \in k^{\times}$ with $b+c\neq0$.
\end{enumerate}
When $\charac k\neq 2$, the above examples $\langle b,-b \rangle$ and $\begin{bsmallmatrix} 0 & b \\ b & 0 \end{bsmallmatrix}$ define the same element of $\GW(k)$, called the \emph{hyperbolic form}, which we denote by~$\hyp$. In this paper, we identify $\hyp$ with all of these forms and use them interchangeably.

When $k$ is a subfield of~$\R$, the \emph{signature}~$s_{\beta}$ of a nondegenerate symmetric bilinear form~$\beta$ over~$k$ that is isometric to $\langle b_1, \dots, b_r\rangle$ is the difference $p-q$, where $p$ (resp.~$q$) is the number of positive (resp.\ negative) entries among $b_1, \dots, b_r$. Given $\beta=[\beta_1]-[\beta_2]\in\GW(k)$, defining $s_{\beta}= s_{\beta_1}-s_{\beta_2}$ gives a well-defined notion of signature on $\GW(k)$. 

We now recall some results about symmetric bilinear forms that will be used in computations throughout the paper. Given a symmetric matrix~$P$, let $\beta_P$ denote the corresponding symmetric bilinear form. 

\begin{lemma}\label{lemma:antidiagonal}
Let $k$ be a field of characteristic not~$2$. If $\beta$ is a symmetric bilinear form whose Gram matrix is the upper antitriangular block matrix
\[\begin{bNiceArray}{c|ccc|c}
c_{1,1} & c_{1,2} & \cdots & c_{1,r-1} & b\\
\hline
c_{1,2} & & & & 0\\
\vdots & & P & & \vdots\\
c_{1,r-1} & & & & 0\\
\hline
b & 0 & \cdots & 0 & 0
\end{bNiceArray},\]
where $b\in k^{\times}$ and $P$ is an $(r-2)\times (r-2)$ matrix, then $\beta\cong \hyp + \beta_P$.
\end{lemma}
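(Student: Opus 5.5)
We find an explicit change of basis $Q$ with $Q^TGQ$ block diagonal, namely the direct sum of $\begin{bsmallmatrix} 0 & b\\ b & 0\end{bsmallmatrix}$ (which is $\hyp$) and $P$. Let $e_1,\dots,e_r$ be the standard basis, so $\beta(e_r,e_r)=0$, $\beta(e_1,e_r)=b$, and $\beta(e_i,e_r)=0$ for $2\le i\le r$. The vector $e_r$ pairs only with $e_1$, so we can use it to clear the first row and column.

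The first step is to replace $e_i$ for $2\le i\le r-1$ by $e_i' = e_i - (c_{1,i}/b)\,e_r$. Then
\[\beta(e_1,e_i')=c_{1,i}-(c_{1,i}/b)\,b=0.\]
Also $\beta(e_i',e_r)=\beta(e_i,e_r)=0$. For $2\le i,j\le r-1$ we get
\[\beta(e_i',e_j')=\beta(e_i,e_j)-(c_{1,j}/b)\beta(e_i,e_r)-(c_{1,i}/b)\beta(e_r,e_j)+(c_{1,i}c_{1,j}/b^2)\beta(e_r,e_r)=\beta(e_i,e_j),\]
because each of the three correction terms is zero. So the block $P$ is unchanged.

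The second step is to replace $e_1$ by $e_1'=e_1-(c_{1,1}/(2b))\,e_r$; this uses $\charac k\ne2$. Then
\[\beta(e_1',e_1')=c_{1,1}-2\,(c_{1,1}/(2b))\,b=0, \qquad \beta(e_1',e_r)=b.\]
For $2\le i\le r-1$, $\beta(e_1',e_i')=\beta(e_1,e_i')-(c_{1,1}/(2b))\beta(e_r,e_i')=0$.

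In the basis $e_1',e_2',\dots,e_{r-1}',e_r$ the Gram matrix is therefore the orthogonal sum of $\begin{bsmallmatrix}0&b\\b&0\end{bsmallmatrix}$ on $\operatorname{span}(e_1',e_r)$ and $P$ on $\operatorname{span}(e_2',\dots,e_{r-1}')$. These vectors form a basis, since the transition matrix is unitriangular after reordering. The first block is $\hyp$ by the discussion in Section~\ref{subsec:GWring}, so $\beta\cong\hyp+\beta_P$.

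The only delicate point is making sure the corrections do not change $P$. This holds because the last row of the original matrix vanishes outside the first entry, as the computation above checks.
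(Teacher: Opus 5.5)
Your proof is correct and is essentially the paper's argument. The paper performs exactly these simultaneous row and column operations: it subtracts $c_{1,j}/b$ times row and column~$r$ from row and column~$j$, then $c_{1,1}/(2b)$ times row and column~$r$ from row and column~$1$, and finally reorders to get $\hyp\oplus P$. Your basis vectors $e_i'$ and $e_1'$ are the same congruence written in terms of the basis.
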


Note that Lemma~\ref{lemma:antidiagonal} is similar to \cite[Lemma~4.5]{BrazeltonMcKean} and \cite[Proposition~5.2]{Brazelton}, but with different hypotheses, so we give a proof. 

\begin{proof}
For all~$j$ satisfying $2\leq j\leq r-1$, subtract $c_{1,j}/b$ times row~$r$ from row~$j$ and $c_{1,j}/b$ times column~$r$ from column~$j$. Then subtract $c_{1,1}/(2b)$ times row~$r$ from row~$1$ and $c_{1,1}/(2b)$ times column~$r$ from column~$1$. This gives the matrix 
\[\begin{bNiceArray}{c|ccc|c}
0 & 0 & \cdots & 0 & b\\
\hline
0 & & & & 0\\
\vdots & & P & & \vdots\\
0 & & & & 0\\
\hline
b & 0 & \cdots & 0 & 0
\end{bNiceArray}.\]
Reordering rows and columns, this becomes
\[\begin{bNiceArray}{cc|ccc}
0 & b & 0 & \cdots & 0\\
b & 0 & 0 & \cdots & 0\\
\hline
0 & 0 & & \\
\vdots & \vdots & & P & \\
0 & 0 & 
\end{bNiceArray},\]
which is the Gram matrix of $\hyp + \beta_P$.
\end{proof}

Repeatedly applying Lemma~\ref{lemma:antidiagonal} yields the following, which was also proven in \cite[Lemma~6]{KassWickelgren2020}.

\begin{proposition}\label{prop:antidiagonal}
Let $k$ be a field of characteristic not~$2$. If $P$ is the upper anti-triangular matrix 
\[\begin{bmatrix}
* & \cdots & * & b\\
\vdots & \iddots & \iddots & 0\\
* & \iddots & \iddots & \vdots\\
b & 0 & \cdots & 0
\end{bmatrix}\]
with a constant entry $b\in k^{\times}$ along the antidiagonal, then 
\[\beta_P\cong \begin{cases}
\frac{r}{2}\hyp & \text{if }r\text{ is even,} \\
\frac{r-1}{2}\hyp+\langle b\rangle & \text{if }r\text{ is odd.}
\end{cases}\]
\end{proposition}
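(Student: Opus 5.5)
We argue by induction on the size $r$ of the matrix~$P$, peeling off one hyperbolic summand at a time with Lemma~\ref{lemma:antidiagonal}. The base cases are $r=1$ and $r=2$. For $r=1$ the matrix is $[b]$, so $\beta_P=\langle b\rangle$. For $r=2$ the matrix is $\begin{bsmallmatrix} * & b\\ b & 0\end{bsmallmatrix}$, and Lemma~\ref{lemma:antidiagonal} applies with an empty middle block $P$. Concretely, subtracting $c_{1,1}/(2b)$ times the last row and column from the first gives $\begin{bsmallmatrix} 0 & b\\ b & 0\end{bsmallmatrix}$, which is~$\hyp$.

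For the inductive step with $r\geq3$, we view $P$ in the block form of Lemma~\ref{lemma:antidiagonal}. The first row is $(*,\dots,*,b)$, the last row is $(b,0,\dots,0)$, and the first column is the transpose of the first row by symmetry. The remaining middle block $P'$ consists of rows and columns $2,\dots,r-1$ of~$P$.

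The one point to check is that $P'$ again has the required shape. The entry of $P$ in position $(i,j)$ vanishes whenever $i+j>r+1$, and equals $b$ when $i+j=r+1$. Reindexing $i'=i-1$ and $j'=j-1$ for $2\leq i,j\leq r-1$, these conditions become: the entry of $P'$ in position $(i',j')$ vanishes when $i'+j'>(r-2)+1$ and equals $b$ when $i'+j'=(r-2)+1$. So $P'$ is an $(r-2)\times(r-2)$ symmetric upper anti-triangular matrix with the same constant~$b$ along its antidiagonal. The entries of the last column in the middle rows are zero, which is exactly what Lemma~\ref{lemma:antidiagonal} requires.

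Lemma~\ref{lemma:antidiagonal} therefore gives $\beta_P\cong\hyp+\beta_{P'}$. Since $r-2$ has the same parity as $r$, the induction hypothesis applied to $P'$ gives the result. If $r$ is even, $\beta_P\cong\hyp+\tfrac{r-2}{2}\hyp=\tfrac r2\hyp$. If $r$ is odd, $\beta_P\cong\hyp+\tfrac{r-3}{2}\hyp+\langle b\rangle=\tfrac{r-1}{2}\hyp+\langle b\rangle$.

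There is no real obstacle here. The only care needed is the index bookkeeping showing that the inner block inherits both the anti-triangular shape and the same antidiagonal constant~$b$, and the observation that the base case $r=2$ is covered by the same row and column operation as in the proof of Lemma~\ref{lemma:antidiagonal}.
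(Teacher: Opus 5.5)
Your proposal is correct and follows essentially the same approach as the paper, which proves the statement simply by repeatedly applying Lemma~\ref{lemma:antidiagonal}. Your induction, with base cases $r=1,2$ and the index check that the inner $(r-2)\times(r-2)$ block is again symmetric and upper anti-triangular with constant antidiagonal $b$, fills in exactly the bookkeeping the paper leaves implicit.
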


\begin{proof}
Repeatedly apply Lemma~\ref{lemma:antidiagonal}.
\end{proof}

\subsection{\texorpdfstring{$\A1$}{A\textonesuperior}-Milnor numbers and EKL forms}\label{subsec:A1Milnor}
$\A1$-homotopy theory provides quadratic enrichments of Brouwer degrees and Milnor numbers. More precisely, Morel \cite[Corollary~1.24]{Morel} constructs an element of the Grothendieck--Witt ring $\GW(k)$ associated to a polynomial map $F\colon\mathbb{A}^n_{k}\to\mathbb{A}^n_{k}$ with an isolated zero at the origin. This element is known as the \emph{local $\A1$-Brouwer degree} of~$F$, denoted $\deg_0^{\A1}(F)$. Suppose $f\in k[x_1,\dots,x_n]$ satisfies $f(0)=0$ and $1\leq\mu_0(f)<\infty$. The \emph{$\A1$-Milnor number} $\mu_0^{\A1}(f)$ of~$f$ at the origin is then defined in~\cite{KassWickelgren2019} as the local $\A1$-Brouwer degree of $\nabla f$, i.e.
\[\mu_0^{\A1}(f) = \deg_0^{\A1}(\nabla f).\]
The Milnor number $\mu_0(f)$ can be recovered as the rank of $\mu_0^{\A1}(f)$.

Over~$\R$, Eisenbud and Levine~\cite{EisenbudLevine} computed the original local Brouwer degree differently. Let $C^{\infty}_0(\R^n)$ be the local ring of germs at the origin of smooth real-valued functions on~$\R^n$. Let $F\colon\R^n \to \R^n$ be a smooth function with an isolated zero at the origin, let $F_0\in C^{\infty}_0(\R^n)$ be the germ of~$F$ at the origin, and let $Q_0(F)$ be the local ring $C^{\infty}_0(\R^n)/(F_0)$. In~\cite{EisenbudLevine}, $\deg(F)$ is computed using a certain nondegenerate symmetric bilinear form $\omega_F$ on $Q_0(F)$. Khimshiashvili~\cite{Khimshiashvili} gave an independent construction of this form, which is now called an \emph{EKL form}. By \cite[Theorem~1.2]{EisenbudLevine}, the local Brouwer degree $\deg_0 (F)$ equals the signature of~$\omega_F$.

As observed by Eisenbud \cite{Eisenbud}, the construction of~$\omega_F$ from \cite{EisenbudLevine, Khimshiashvili} remains valid when $F$ is a polynomial map and $\R$ is replaced by an arbitrary field of characteristic not~$2$. Eisenbud asked whether $\omega_F$ can be identified with some topological invariant. This question was answered in the affirmative by Kass and Wickelgren~\cite{KassWickelgren2019}, who showed that $\omega_F$ represents the same element of the Grothendieck--Witt ring as the local $\A1$-Brouwer degree $\deg_0^{\A{1}}(F)$. This result was later extended to polynomial maps with an isolated zero at a non-rational point~\cite{BBMMO}. This provides a tractable way to compute local $\A1$-Brouwer degrees and hence $\A1$-Milnor numbers. 

We next explain how to compute EKL forms in order to find local $\A1$-Brouwer degrees; for more details, see \cite[Section~1]{KassWickelgren2019}. Let $k$ be a field of characteristic not~$2$, and let $F\colon \A{n}_{k} \to \A{n}_{k}$ be a polynomial map with an isolated zero at the origin. Define 
\[Q_0(F)\coloneqq k[x_1, \dots, x_n]_{\mathfrak m}/ (F_1, \dots, F_n),\]
the quotient of the polynomial ring localized at the maximal ideal $\mathfrak m = (x_1, \dots, x_n)$ by the ideal generated by the coordinate functions~$F_i$ of~$F$. Let $J_F$ be the Jacobian matrix of~$F$. In the case when $\dim_k Q_0(F)$ is not divisible by $\charac k$, let $\eta \colon Q_0(F) \to k$ be a linear functional with the property that $\eta(\det J_F)=\dim_k Q_0(F)$. One can then define a nondegenerate symmetric bilinear form~$\omega_F$ on $Q_0(F)$ by $\omega_F( u,v) \coloneqq \eta(uv)$. The form $\omega_F$ determines an element of $\GW(k)$, called the \emph{EKL form} of~$F$, that is independent of the choice of~$\eta$ (see \cite{EisenbudLevine, Khimshiashvili, KassWickelgren2019}). For a discussion of the case when $\charac k=2$, see \cite[Remark~46]{KassWickelgren2019} and \cite[Lemma~48]{KassWickelgren2016}. 

The above definition only works when $\charac k$ does not divide $\dim_k Q_0(F)$, but it can be extended using the \emph{distinguished socle element} $E_0(F)$ of $Q_0(F)$. A procedure for finding $E_0(F)$ is given in~\cite{KassWickelgren2019}. First, find polynomials $F_{i,j}$ such that 
\[ F_i= \sum_{j=1}^n x_j F_{i,j}.\]
Then $E_0(F)=\det(F_{i,j})$ is a nonzero element of $Q_0(F)$ that is independent of the choice of the $F_{i,j}$. Define $\eta \colon Q_0(F) \to k$ such that $\eta(E_0(F))=1$. As above, this~$\eta$ determines a nondegenerate symmetric bilinear form~$\omega_F$ that gives an element of $\GW(k)$ that is the EKL form of~$F$. The two constructions of~$\omega_F$ are consistent as Scheja and Storch \cite[(4.7)~Korollar]{SchejaStorch} show that $\det J_F=\dim_k Q_0(F)\cdot E_0(F)$ in $Q_0(F)$. 

We now give an explicit example that illustrates the use of the EKL form to compute an $\A1$-Milnor number.

\begin{example}\label{ex:running}
Let $k$ be a field of characteristic not $2$ or~$5$, and let $f = (x+y^{2})^{2} + xy^{3}$. The Jacobian ideal of~$f$ is given by 
\[ J_{f} = (f_x, f_y)= (2x + 2y^{2} + y^{3}, 4xy + 3xy^{2} + 4y^{3}).\] 

We find a basis for $Q_{0}(\nabla f)= k[x,y]_{(x,y)}/ \overline{J}_f$, where~$\overline{J}_f$ is the ideal generated by the Jacobian ideal~$J_f$ in the localization. Using $f_x=0$, we substitute for~$x$ in $f_y=0$ to obtain
\[\overline{J}_{f} = (2x+ 2y^{2} + y^{3}, -y^{4}(10+3y)/2) = (2x + 2y^{2} + y^{3}, y^{4})\]
as an ideal in $k[x,y]_{(x,y)}$, as $10+3y$ is a unit since $\charac k$ is not $2$ or~$5$. Then, in $Q_0(\nabla f)$, $x$ can be written in terms of~$y$. As $y^4=0$, 
\[Q_{0}(\nabla f)= k[y^2+y^3/2, y]_{(y)}/(y^4)\cong k[y]/(y^4).\]
Thus $\B=\{1, y, y^2, y^3\}$ is a basis for $Q_0(\nabla f)$ and $\mu_0(f)=\dim Q_0(\nabla f)=4$. Using the basis~$\B$, we now calculate the $\A1$-Milnor number. The determinant of the Hessian matrix~$H_f$ of~$f$ (i.e.\ the Jacobian of the gradient) is 
\[\det H_f = 8x + 12xy + 8y^{2} - 24y^{3} - 9y^{4}=-40 y^3\]
in $Q_{0}(\nabla f)$ as $y^{4} = 0$ and $x = -(2y^{2} + y^{3})/2$. Since $\mu_0(f) = 4$, we define $\eta(y^{3}) = -1/10$ and let $\eta(v)=0$ for all other $v\in\B$. Then $\eta(-40y^{3}) = 4$, as needed for the construction of $\mu^{\A1}_0(f)$. The Gram matrix of $\mu^{\A1}_{0}(f)$ is thus
\[\begin{bmatrix}
 \eta(1 \cdot 1) & \eta(1 \cdot y) & \eta(1 \cdot y^{2}) & \eta(1 \cdot y^{3}) \\
 \eta(y \cdot 1) & \eta(y \cdot y) & \eta(y \cdot y^{2}) & \eta(y \cdot y^{3}) \\
 \eta(y^{2} \cdot 1) & \eta(y^{2} \cdot y) & \eta(y^{2} \cdot y^{2}) & \eta(y^{2} \cdot y^{3}) \\
 \eta(y^{3} \cdot 1) & \eta(y^{3} \cdot y) & \eta(y^{3} \cdot y^{2}) & \eta(y^{3} \cdot y^{3}) \\
\end{bmatrix} =
\begin{bmatrix}
 0 & 0 & 0 & -1/10 \\
 0 & 0 & -1/10 & 0 \\
 0 & -1/10 & 0 & 0 \\
 -1/10 & 0 & 0 & 0 \\
\end{bmatrix}.\]
By Proposition~\ref{prop:antidiagonal}, we have $\mu^{\A1}_{0}(f) = 2\hyp$.
\end{example}

\subsection{How operations affect \texorpdfstring{$\A1$}{A\textonesuperior}-Milnor numbers}
We conclude this section with some results that show how the $\A1$-Milnor number is affected by multiplying the polynomial by a constant or adding terms in other variables.

\begin{proposition}\label{prop:constantmultiple}
Let $k$ be a field of characteristic not~$2$, and let $f\colon\mathbb{A}_k^n\to \mathbb{A}_k^1$ be a polynomial map with $f(0)=0$ and $1\leq\mu_0(f)<\infty$. If $b\in k^{\times}$, then 
\[\mu^{\mathbb{A}^1}_0(bf)=\begin{cases}
\mu^{\mathbb{A}^1}_0(f) & \text{if $n$ is even,}\\
\langle b\rangle\cdot\mu^{\mathbb{A}^1}_0(f) & \text{if $n$ is odd.}
\end{cases}\]
\end{proposition}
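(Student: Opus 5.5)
The plan is to compute the EKL form of $\nabla(bf)$ directly and compare it with that of $\nabla f$, using the distinguished socle element construction from Section~\ref{subsec:A1Milnor}.

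First, the local algebra does not change. Since $b\in k^{\times}$, we have $\nabla(bf)=b\nabla f$, so the ideals $((bf)_{x_1},\dots,(bf)_{x_n})$ and $(f_{x_1},\dots,f_{x_n})$ agree in $k[x_1,\dots,x_n]_{\mathfrak m}$. Hence $Q_0(\nabla(bf))=Q_0(\nabla f)$ as $k$-algebras, and $\mu_0(bf)=\mu_0(f)$, so $bf$ satisfies the same hypotheses as $f$.

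Second, we compare the distinguished socle elements. Choose polynomials $F_{i,j}$ with $f_{x_i}=\sum_j x_jF_{i,j}$. Then $(bf)_{x_i}=\sum_j x_j(bF_{i,j})$ is an admissible decomposition for $\nabla(bf)$. Since $E_0$ does not depend on the choice of decomposition,
\[E_0(\nabla(bf))=\det(bF_{i,j})=b^n\det(F_{i,j})=b^nE_0(\nabla f)\]
in the common algebra $Q:=Q_0(\nabla f)$. So if $\eta$ is a functional on $Q$ with $\eta(E_0(\nabla f))=1$, then $\eta'=b^{-n}\eta$ satisfies $\eta'(E_0(\nabla(bf)))=1$. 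Consequently $\omega_{\nabla(bf)}(u,v)=b^{-n}\omega_{\nabla f}(u,v)$ on the same vector space.

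Third, we pass to $\GW(k)$. Scaling a form by $b^{-n}$ multiplies its class by $\langle b^{-n}\rangle$, as in \eqref{eq:GWoperations}. By properties (i) and (ii) of $\GW(k)$, $\langle b^{-n}\rangle=\langle b^{n}\rangle$, which equals $\langle 1\rangle$ if $n$ is even and $\langle b\rangle$ if $n$ is odd. By Kass--Wickelgren, the EKL form equals $\mu_0^{\A1}$, and the claimed formula follows.

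The only delicate point is the independence of $E_0$ from the choice of the $F_{i,j}$, which the paper quotes from~\cite{KassWickelgren2019}. Taking that for granted, the argument is routine. As an alternative one could use the $\det J_F$ normalization, since $\det H_{bf}=b^n\det H_f$. That route needs $\charac k\nmid\mu_0(f)$, however, so the socle-element version is preferable.
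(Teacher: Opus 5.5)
Your proposal is correct and follows the paper's proof: the same local algebra, the identity $E_0(\nabla(bf))=b^nE_0(\nabla f)$, rescaling the functional by $b^{-n}$, and then $\langle b^{-n}\rangle=\langle b^{n}\rangle$. Your remark that the socle-element normalization avoids the hypothesis $\charac k\nmid\mu_0(f)$ needed by the $\det H$ route is also accurate.
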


\begin{proof}
Let $g=bf$. Then 
\[J_g=(g_{x_1},\dots,g_{x_n})=(bf_{x_1},\dots,bf_{x_n})=(f_{x_1},\dots,f_{x_n})=J_f,\]
so $Q_0(\nabla g)=Q_0(\nabla f)$. The procedure for finding the distinguished socle element given in Section~\ref{subsec:A1Milnor} shows that $\E{g} =b^{n}\E{f}$. Defining $\eta_g=\frac{1}{b^n}\eta_f$, we then have 
\[\eta_g(\E{g})=\frac{1}{b^n}\eta_f(b^n\E{f})=\eta_f(\E{f})=1.\]
The Gram matrix of the EKL form of~$g$ is thus $1/b^n$ times the one for~$f$, so multiplication as given in~\eqref{eq:GWoperations} and the fact that $\langle b^2\rangle=\langle1\rangle$ yields
\[\mu^{\mathbb{A}^1}_0(g)=\Bigl\langle\frac{1}{b^n}\Bigr\rangle\cdot\mu^{\mathbb{A}^1}_0(f)=\begin{cases}
\mu^{\mathbb{A}^1}_0(f) & \text{if $n$ is even,}\\
\langle b\rangle\cdot\mu^{\mathbb{A}^1}_0(f) & \text{if $n$ is odd.}
\end{cases}\qedhere\]
\end{proof}

\begin{remark}
Although $V(f)$ and $V(bf)$ are the same variety, Proposition~\ref{prop:constantmultiple} says that $\mu_0^{\A1}(f)$ and $\mu_0^{\A1}(bf)$ can be different when $n$ is odd, as observed by Pauli and Wickelgren \cite[Section~6.2]{PauliWickelgren}. This is why we speak about the $\A1$-Milnor number of a polynomial map rather than the $\A1$-Milnor number of an isolated singularity on a hypersurface. 
\end{remark}

We next prove a result about what happens when adding polynomials in distinct sets of variables. If $f\colon\A{n_1}_k\to\A{1}_k$ and $g\colon\A{n_2}_k\to\A{1}_k$ are polynomial maps, one can think of $f+g$ as giving a polynomial map $\A{n_1+n_2}_k\to\A{1}_k$ by setting $(f+g)(\mathbf{x}, \mathbf{y})= f(\mathbf{x})+g(\mathbf{y})$. As $J_{f+g}=J_f+J_g$ as ideals in $k[x_1, \dots, x_{n_1},y_1, \dots, y_{n_2}]$, we have that
\[k[x_1, \dots, x_{n_1},y_1, \dots, y_{n_2}]/J_{f+g}\cong k[x_1,\dots,x_{n_1}]/J_f\otimes k[y_1,\dots,y_{n_2}]/J_g.\]
Localizing then gives $Q_0(\nabla(f+g))\cong Q_0(\nabla f)\otimes Q_0(\nabla g)$, so $\mu_0(f+g)=\mu_0(f)\mu_0(g)$. In particular, $\mu^{\mathbb{A}^{1}}_0(f+g)$ is well defined when $\mu^{\mathbb{A}^{1}}_0(f)$ and $\mu^{\mathbb{A}^{1}}_0(g)$ both are. 

Straightforward computations show that under the above isomorphism the distinguished socle element $\E{(f+g)}$ is identified with the tensor product of $\E{f}$ and $\E{g}$. By abuse of notation, we write 
\[\E{(f+g)}= \E{f}\cdot \E{g} = \E{f} \otimes \E{g}.\]

\begin{proposition}\label{prop:product}
Let $k$ be a field of characteristic not~$2$, and let $f\in k[x_1, \dots, x_{n_1}]$ and $g\in k[y_1, \dots, y_{n_2}]$ with $f(0)=g(0)=0$. If $1\leq\mu_0(f),\mu_0(g)<\infty$, then 
\[\mu^{\mathbb{A}^{1}}_0(f+g)=\mu^{\mathbb{A}^{1}}_0(f)\cdot\mu^{\mathbb{A}^{1}}_0(g).\]
\end{proposition}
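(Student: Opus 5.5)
The plan is to work directly with EKL forms, using the tensor product decomposition recorded just before the statement. Write $A=Q_0(\nabla f)$, $B=Q_0(\nabla g)$, and identify $Q_0(\nabla(f+g))\cong A\otimes_k B$. Since $\mu_0(f+g)=\mu_0(f)\mu_0(g)<\infty$ and $(f+g)(0)=0$, the $\A1$-Milnor number $\mu_0^{\A1}(f+g)$ is defined. Choose linear functionals $\eta_f\colon A\to k$ and $\eta_g\colon B\to k$ with $\eta_f(\E{f})=1$ and $\eta_g(\E{g})=1$. Then set $\eta=\eta_f\otimes\eta_g\colon A\otimes B\to k$, determined by $\eta(a\otimes b)=\eta_f(a)\eta_g(b)$.

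First I will verify that $\eta$ is an admissible functional for $f+g$. By the identification $\E{(f+g)}=\E{f}\otimes\E{g}$, we get $\eta(\E{(f+g)})=\eta_f(\E{f})\eta_g(\E{g})=1$. This identification is the only substantive input. If it needs justification, I would check it directly by splitting the variables: write $f_{x_i}=\sum_j x_j F_{i,j}$ with $F_{i,j}\in k[\mathbf x]$ and $g_{y_i}=\sum_j y_j G_{i,j}$ with $G_{i,j}\in k[\mathbf y]$. The resulting $(n_1+n_2)\times(n_1+n_2)$ coefficient matrix for $\nabla(f+g)$ is then block diagonal, so its determinant is $\det(F_{i,j})\det(G_{i,j})$.

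Next I will compute the form itself. Let $\omega$ be the EKL form on $A\otimes B$ defined by $\eta$. Multiplication in $A\otimes B$ is componentwise, so
\[\omega(a\otimes b,a'\otimes b')=\eta(aa'\otimes bb')=\eta_f(aa')\,\eta_g(bb')=\omega_f(a,a')\,\omega_g(b,b').\]
By bilinearity this shows $\omega=\omega_f\otimes\omega_g$ as symmetric bilinear forms on $A\otimes B$. Concretely, take bases $\{a_i\}$ of $A$ and $\{b_j\}$ of $B$. The Gram matrix of $\omega$ in the basis $\{a_i\otimes b_j\}$ is the Kronecker product of the two Gram matrices.

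Finally, I will apply the multiplication rule \eqref{eq:GWoperations} in $\GW(k)$, where the product is given by tensor product of forms, together with the independence of the EKL form class from the choice of $\eta$. Combined with the Kass--Wickelgren identification of EKL forms with local degrees, this gives $\mu_0^{\A1}(f+g)=\mu_0^{\A1}(f)\cdot\mu_0^{\A1}(g)$.

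The main obstacle is making sure the socle-element identification really passes through the localization isomorphism. The local ring of $k[\mathbf x,\mathbf y]$ at $(\mathbf x,\mathbf y)$ is not literally the tensor product of the two local rings. The isomorphism holds only after quotienting by the Jacobian ideals, because both quotients are finite-dimensional and supported at the origin. I would handle this by noting that each finite local algebra is a quotient of the polynomial ring by an ideal containing a power of the maximal ideal, so localization is harmless there.
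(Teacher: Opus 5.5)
Your proposal is correct and matches the paper's proof: you define $\eta=\eta_f\otimes\eta_g$ on $Q_0(\nabla f)\otimes Q_0(\nabla g)$, check that $\eta(E_0(\nabla(f+g)))=1$ using $E_0(\nabla(f+g))=E_0(\nabla f)\otimes E_0(\nabla g)$, and observe that the Gram matrix in the basis $\{a_i\otimes b_j\}$ is the Kronecker product of the two Gram matrices. Your block-diagonal argument for the socle identification, and your remark on why localization is compatible with the tensor decomposition, supply details that the paper only asserts as ``straightforward computations.''
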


\begin{proof}
Let $\{u_i\}$ be a vector space basis of $Q_0(\nabla f)$ and $\{v_{\ell}\}$ a vector space basis of $Q_0(\nabla g)$, so that $\{u_i\otimes v_{\ell}\}$ forms a basis of $Q_0(\nabla f)\otimes Q_0(\nabla g)\cong Q_0(\nabla(f+g))$. Let $\eta_f\colon Q_0(\nabla f)\to k$ and $\eta_g\colon Q_0(\nabla g)\to k$ be linear functionals such that $\eta_f(\E{f})=1$ and $\eta_g(\E{g})=1$. Let $\eta\colon Q_0(\nabla f)\times Q_0(\nabla g)\to k$ be the bilinear map defined by $\eta(u_i,v_{\ell})=\eta_f(u_i)\eta_g(v_{\ell})$, and let $\widetilde{\eta}\colon Q_0(\nabla f)\otimes Q_0(\nabla g)\to k$ be the induced map. We then have 
\[\widetilde{\eta}(\E{(f+g)})=\widetilde{\eta}(\E{f}\otimes \E{g})=\eta_f(\E{f})\eta_g(\E{g})=1\cdot 1=1.\]

Let $i$ (resp.~$\ell$) index the rows of the Gram matrix of $\mu^{\mathbb{A}^{1}}_0(f)$ (resp.\ $\mu^{\mathbb{A}^{1}}_0(g)$), and let $j$ (resp.~$m$) index the columns of the Gram matrix of $\mu^{\mathbb{A}^{1}}_0(f)$ (resp.\ $\mu^{\mathbb{A}^{1}}_0(g)$). The pairs $(i,\ell)$ (resp.\ $(j, m)$ then index the rows (resp.\ columns) of the Gram matrix of $\mu^{\mathbb{A}^{1}}_0(f+g)$. In position $((i,\ell),(j,m))$, the Gram matrix of $\mu_0^{\A1}(f+g)$ has entry 
\[\widetilde\eta((u_i\otimes v_{\ell})(u_{j}\otimes v_m))=\widetilde\eta(u_i u_{j}\otimes v_{\ell}v_m)=\eta_f(u_i u_j)\eta_g(v_{\ell} v_m).\] 
As the Gram matrix of $\mu_0^{\A1}(f)$ has entries $\eta_f(u_i u_{j})$ and the Gram matrix of $\mu_0^{\A1}(g)$ has entries $\eta_g(v_{\ell} v_m)$, the entry of the Gram matrix of $\mu_0^{\A1}(f)\cdot\mu_0^{\A1}(g)$ in position $((i,\ell), (j,m))$ is also $\eta_f(u_i u_j)\eta_g(v_{\ell} v_m)$. Therefore $\mu^{\A1}_0(f+g)=\mu_0^{\A1}(f)\cdot\mu_0^{\A1}(g)$.
\end{proof}

We now record a special case of Proposition~\ref{prop:product} that will be of use to us. We will later prove another related result in Theorem~\ref{thm:variablereduction}.

\begin{corollary}\label{cor:addterm}
Let $k$ be a field of characteristic not~$2$. If $f(\mathbf{x},z)=g(\mathbf{x})+az^2$, where $a\in k^{\times}$, then
\[\mu^{\mathbb{A}^{1}}_0(f)=\langle {2a} \rangle \cdot\mu^{\mathbb{A}^{1}}_0(g).\qedhere\]
\end{corollary}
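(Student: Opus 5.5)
This follows from Proposition~\ref{prop:product} once the $\A1$-Milnor number of the one-variable polynomial $h(z)=az^2$ is known. Write $f=g+h$ with $g\in k[x_1,\dots,x_n]$ and $h\in k[z]$ in disjoint variables. Then $\mu^{\A1}_0(f)=\mu^{\A1}_0(g)\cdot\mu^{\A1}_0(h)$, provided $\mu^{\A1}_0(g)$ is defined. We assume this implicitly, i.e.\ $g(0)=0$ and $1\leq\mu_0(g)<\infty$; these are exactly the conditions under which the left side is meaningful, since $\mu_0(f)=\mu_0(g)\mu_0(h)$. Also $h(0)=0$, as Proposition~\ref{prop:product} requires.

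The remaining step is to compute $\mu^{\A1}_0(az^2)$ directly from the EKL construction of Section~\ref{subsec:A1Milnor}. The gradient is $h'(z)=2az$. Since $\charac k\neq 2$ and $a\neq0$, this generates $(z)$ in $k[z]_{(z)}$, so $Q_0(\nabla h)=k$ with basis $\{1\}$ and $\mu_0(h)=1$.

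For the distinguished socle element, write $2az=z\cdot(2a)$, so $E_0(\nabla h)=2a$. The functional $\eta$ must satisfy $\eta(2a)=1$, i.e.\ $\eta(1)=1/(2a)$. The Gram matrix is therefore the $1\times1$ matrix $[1/(2a)]$. Since $1/(2a)=2a\cdot(1/(2a))^2$, we have $\langle 1/(2a)\rangle=\langle 2a\rangle$, and hence $\mu^{\A1}_0(h)=\langle 2a\rangle$. As a check, the alternative normalization $\eta(\det J)=\dim_k Q_0=1$ with $\det J=2a$ gives the same form.

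Combining the two steps gives $\mu^{\A1}_0(f)=\langle 2a\rangle\cdot\mu^{\A1}_0(g)$. No step is a real obstacle. The only points needing care are that $\charac k\neq2$ is what makes $2a$ a unit, and that the square class of $1/(2a)$ equals that of $2a$.
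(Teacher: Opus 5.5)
Your proposal is correct and follows the paper's proof: both compute $\mu_0^{\mathbb{A}^1}(az^2)=\langle 1/(2a)\rangle=\langle 2a\rangle$ using the basis $\{1\}$ of the local algebra, and then apply Proposition~\ref{prop:product}. You normalize $\eta$ by the distinguished socle element $2a$ while the paper uses the Hessian determinant $2a$; these agree since the rank is $1$, and your explicit statement of the implicit hypotheses $g(0)=0$ and $1\leq\mu_0(g)<\infty$ is a harmless clarification.
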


\begin{proof}
We consider $az^2$ as a polynomial in $k[z]$ and compute $\mu_0^{\A1}(az^2)$. Since the derivative is $2az$, the local ring has basis $\{1\}$, and the Hessian determinant is $2a$. Therefore we can define $\eta$ by $\eta(1)=1/(2a)$, giving $\mu_0^{\A1}(az^2)=\langle1/(2a)\rangle$. Proposition~\ref{prop:product} thus gives 
\[\mu^{\mathbb{A}^{1}}_0(f)=\Bigl\langle \frac{1}{2a} \Bigr\rangle \cdot\mu^{\mathbb{A}^{1}}_0(g)=\langle {2a} \rangle \cdot\mu^{\mathbb{A}^{1}}_0(g).\qedhere\] 
\end{proof}

The construction in Corollary~\ref{cor:addterm} can be used to show that elements of $\GW(k)$ that arise from a polynomial with a certain number of variables also arise from polynomials with any larger number of variables. 

\begin{proposition}\label{prop:increasen}
Let $k$ be a field of characteristic not~$2$, and let $n$ be a positive integer. If an element of $\GW(k)$ is the $\A1$-Milnor number of a polynomial in $n$~variables, then, for all $n'\geq n$, it also arises as the $\A1$-Milnor number of a polynomial in $n'$~variables.
\end{proposition}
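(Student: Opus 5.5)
Suppose $\beta=\mu_0^{\A1}(f)$ for some $f\in k[x_1,\dots,x_n]$ with $f(0)=0$ and $1\leq\mu_0(f)<\infty$. We induct on $n'-n$, adding one new variable at a time. The base case $n'=n$ is trivial. So it suffices to show that $\beta$ is also the $\A1$-Milnor number of some polynomial in $n+1$ variables.

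The natural candidate is $g(\mathbf{x},z)=f(\mathbf{x})+az^2$ with $a\in k^\times$ chosen to kill the unit factor in Corollary~\ref{cor:addterm}. Taking $a=1/2$, which makes sense since $\charac k\neq2$, gives
\[\mu_0^{\A1}(g)=\langle 2\cdot\tfrac12\rangle\cdot\mu_0^{\A1}(f)=\langle1\rangle\cdot\beta=\beta.\]
We also need $g$ to satisfy the hypotheses in the definition of the $\A1$-Milnor number. Clearly $g(0)=0$. By the discussion preceding Proposition~\ref{prop:product},
\[\mu_0(g)=\mu_0(f)\,\mu_0\bigl(\tfrac12 z^2\bigr)=\mu_0(f)\cdot1,\]
so $1\leq\mu_0(g)<\infty$. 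Hence $\mu_0^{\A1}(g)$ is defined and equals $\beta$.

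Iterating this construction $n'-n$ times, we take
\[h=f+\tfrac12\bigl(z_1^2+\dots+z_{n'-n}^2\bigr).\]
Then $h$ is a polynomial in $n'$ variables with $\mu_0^{\A1}(h)=\beta$.

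There is no real obstacle here. The only points to check are that $2$ is invertible, so that $a=1/2$ is allowed, and that finiteness and positivity of the Milnor number are preserved. Both follow immediately from $\charac k\neq 2$ and the multiplicativity $\mu_0(f+g)=\mu_0(f)\mu_0(g)$ established before Proposition~\ref{prop:product}.
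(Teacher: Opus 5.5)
Your proposal is correct and is essentially the paper's own proof: the paper likewise applies Corollary~\ref{cor:addterm} $n'-n$ times with $a=1/2$, so that the unit factor $\langle 2a\rangle=\langle 1\rangle$ is trivial at each step. Your added check that $\mu_0$ stays positive and finite, via $\mu_0(f+\tfrac12 z^2)=\mu_0(f)\cdot 1$, is a detail the paper leaves implicit.
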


\begin{proof}
Apply Corollary~\ref{cor:addterm} $n'-n$ times with $a=1/2$ at each step.
\end{proof}

\section{All local \texorpdfstring{$\A1$}{A\textonesuperior}-Brouwer degrees of rank at most \texorpdfstring{$7$}{7} are \texorpdfstring{$\A1$}{A\textonesuperior}-Milnor numbers}\label{sec:allarise}

Quick, Strand, and Wilson~\cite{QuickStrandWilson} determine the elements of the Grothendieck--Witt ring of rank at most~$7$ that are local $\A1$-Brouwer degrees, as shown in Table~\ref{tab:A1Brouwer}. In this section, we show that, over a field of characteristic not $2$, $3$, or~$5$, all of these elements also arise as $\A1$-Milnor numbers of polynomials in some number of variables. Over many fields, however, not all such elements arise as $\A1$-Milnor numbers of single-variable polynomials (see Proposition~\ref{prop:onevariable}).

\begin{table}
\begin{tabular}{cc}
\toprule
Rank & Local $\A1$-Brouwer degrees\\
\midrule
1 & $\langle b\rangle$\\
2 & $\hyp$\\
3 & $\hyp+\langle b\rangle$\\
4 & $\hyp+\langle b,c\rangle$\\
5 & $2\hyp+\langle b\rangle$\\
6 & $2\hyp+\langle b,c\rangle$\\
7 & $3\hyp+\langle b\rangle$\\
\bottomrule
\end{tabular}
\caption{Possible local $\A1$-Brouwer degrees according to~\cite{QuickStrandWilson}. Here $b$ and~$c$ are allowed to take any nonzero values. \label{tab:A1Brouwer}}
\end{table}

We proceed by calculating $\A1$-Milnor numbers of certain families of polynomials. The first family of polynomials is similar to one studied in~\cite{QuickStrandWilson} and shows that many local $\A1$-Brouwer degrees are $\A1$-Milnor numbers of single-variable polynomials.

\begin{example}[cf.\ {\cite[Lemma~2.4]{QuickStrandWilson}}]\label{ex:onevariable}
Let $k$ be a field of characteristic not~$2$, let $b\in k^{\times}$, and let $r$ be a positive integer such that $\charac k$ does not divide $r+1$. Consider the single-variable polynomial $f=\frac{1}{b(r+1)}x^{r+1}$. Then $f_x=\frac{1}{b}x^r$, so $Q_0(\nabla f)\cong k[x]/(f_x)$ is an $r$-dimensional vector space with basis $\{1,x,\dots,x^{r-1}\}$. The Hessian determinant is $\frac{r}{b}x^{r-1}$, and the distinguished socle element is $\E{f}=\frac{1}{b}x^{r-1}$. We can define $\eta\colon Q_0(\nabla f)\to k$ by $\eta(x^{r-1})=b$ and $\eta(x^i)=0$ for all $i\in\{0,1,\dots,r-2\}$. This implies that $\eta(\E{f}) =1$, so we may use this functional to obtain the EKL form. With the given ordering of the basis, the Gram matrix is then an antidiagonal matrix with $b$ in each antidiagonal position. By Proposition~\ref{prop:antidiagonal}, we thus have
\[ \mu^{\A1}_0(f)=\begin{cases}
\frac{r}{2}\hyp & \text{if }r\text{ is even,}\\
\frac{r-1}{2}\hyp+\langle b \rangle & \text{if }r\text{ is odd.}
\end{cases}\]
\end{example}

The following proposition shows that these are the only forms that can arise as $\A1$-Milnor numbers of single-variable polynomials. 

\begin{proposition}\label{prop:onevariable}
Let $k$ be a field of characteristic not~$2$, and let $f\colon\A1_k \to \A1_k$ be a polynomial map with $f(0)=0$ and $1\leq\mu_0(f)<\infty$. If $r\coloneqq\mu_0(f)$, then $\charac k\nmid r+1$ and 
\[ \mu^{\A1}_0(f)=\begin{cases}
\frac{r}{2}\hyp & \text{if $r$ is even,}\\
\frac{r-1}{2}\hyp+\langle {b} \rangle & \text{if $r$ is odd,} 
\end{cases}\]
for some $b\in k^{\times}$. 
\end{proposition}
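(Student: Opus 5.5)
Write $f'=x^r u(x)$ with $u\in k[x]$ and $u(0)\neq0$. This is possible because $\mu_0(f)=\dim_k k[x]_{(x)}/(f')=r$ is finite, so $f'\neq0$ and its order of vanishing at $0$ is exactly $r$. Since $u$ is a unit in $k[x]_{(x)}$, we have $Q_0(\nabla f)=k[x]_{(x)}/(x^r)\cong k[x]/(x^r)$, with basis $\{1,x,\dots,x^{r-1}\}$.

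First I check that $\charac k\nmid r+1$. Write $f=\sum_{i\geq1} c_i x^i$, so $f'=\sum i c_i x^{i-1}$. The coefficient of $x^r$ in $f'$ is $(r+1)c_{r+1}$, and it equals $u(0)\neq0$. Hence $r+1\neq0$ in $k$. (This is the only place where the hypothesis $f(0)=0$ matters, and only through the fact that $f$ is an antiderivative of $f'$.) Note also that $r\geq1$ means $f'(0)=0$.

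Next I compute the distinguished socle element. Here $n=1$ and $F_1=f'=x\cdot\bigl(x^{r-1}u(x)\bigr)$, so $E_0(\nabla f)=x^{r-1}u(x)$. In $Q_0(\nabla f)$ this reduces to $u(0)x^{r-1}$, because every higher power of $x$ vanishes. Define $\eta(x^{r-1})=1/u(0)$ and $\eta(x^i)=0$ for $i<r-1$, so that $\eta(E_0(\nabla f))=1$. Then $\eta(x^ix^j)$ is $1/u(0)$ when $i+j=r-1$ and $0$ otherwise, since $x^{i+j}=0$ when $i+j\geq r$. So with respect to the basis $1,x,\dots,x^{r-1}$ the Gram matrix is antidiagonal with constant entry $b\coloneqq 1/u(0)$.

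Finally, Proposition~\ref{prop:antidiagonal} gives $\frac r2\hyp$ for $r$ even and $\frac{r-1}2\hyp+\langle b\rangle$ for $r$ odd. There is no real obstacle here; the only point needing care is the reduction of the socle element modulo $x^r$.
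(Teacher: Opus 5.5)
Your proof is correct and is essentially the paper's argument: the paper also writes $f_x$ as $x^{r}$ times a unit (after first discarding initial terms with $ia_i=0$), gets $Q_0(\nabla f)\cong k[x]/(x^r)$ and $r+1\neq0$ in $k$ from the nonzero leading coefficient $(r+1)a_{r+1}$, and takes $\eta(x^{r-1})=1/((r+1)a_{r+1})$, which is exactly your $1/u(0)$, before invoking Proposition~\ref{prop:antidiagonal}. One small quibble is that the hypothesis $f(0)=0$ is not actually used when you compute $u(0)=(r+1)c_{r+1}$, since a constant term in $f$ does not affect $f'$.
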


\begin{proof}
If $f$ is a single-variable polynomial with $f(0)=0$ and $1\leq\mu_0(f)<\infty$, then $f=\sum_{i=2}^{d} a_i x^i \in k[x]$ for some~$d$. Since $\mu^{\A1}_0(f)$ depends only on $f_x$, we can remove all initial terms $a_ix^i$ for which $ia_i=0$ without changing the $\A1$-Milnor number, thus obtaining a new polynomial $g=\sum_{i=j}^{d} a_i x^i \in k[x]$ with $ja_{j}\neq0$ and $\mu^{\A1}_0(g)=\mu^{\A1}_0(f)$. Then $g_x = x^{j-1} \bigl(j a_{j} + \sum_{i={j+1}}^d i a_i x^{i-j}\bigr)$, where the second factor is a unit in the local ring $k[x]_{(x)}$ since $ja_j\neq 0$. Therefore $x^{j-1}=0$ in $Q_0(\nabla g)$, so $Q_0(\nabla g)\cong k[x]/(x^{j-1})$ is a $(j-1)$-dimensional vector space with basis $\{1,\, x, \, \dots, x^{j-2}\}$. Since $\mu_0(g)=\mu_0(f)=r$, we have $j=r+1$. Moreover, $\charac k\nmid r+1$ as $r+1 \neq 0$ in $k$.

Since $x^\ell=0$ in $Q_0(\nabla g)$ for all $\ell \geq r$, the Hessian determinant is $(r+1)ra_{r+1}x^{r-1}$ and the distinguished socle element is $\E{g}= (r+1)a_{r+1}x^{r-1}$ in $Q_0(\nabla g)$. We can define $\eta\colon Q_0(\nabla g)\to k$ by $\eta(x^{r-1})=1/((r+1)a_{r+1})$ and $\eta(x^i)=0$ for all $i\in\{0,1,\dots,r-2\}$. This implies that $\eta(\E{f}) =1$, so we may use this functional to obtain the EKL form. With the given ordering of the basis, the Gram matrix is then an antidiagonal matrix with the entries $b\coloneqq 1/((r+1)a_{r+1})$ along the diagonal. Applying Proposition~\ref{prop:antidiagonal} then yields the result. 
\end{proof}

Over fields of positive characteristic, Proposition~\ref{prop:onevariable} restricts the possible Milnor numbers of single-variable polynomials.

\begin{corollary}
Let $k$ be a field of characteristic not $0$ or~$2$. If $f\colon\A1_k\to \A1_k$ is a polynomial map with $f(0)=0$ and $1\leq\mu_0(f)<\infty$, then $\mu_0(f)\not \equiv -1 \pmod{\charac k}$. In particular, there is no such $f$ with $\mu_0(f)=\charac k -1$.
\end{corollary}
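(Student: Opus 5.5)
This corollary follows directly from Proposition~\ref{prop:onevariable}. Let $p=\charac k$, which by hypothesis is an odd prime. Suppose $f\colon\A1_k\to\A1_k$ satisfies $f(0)=0$ and $1\leq\mu_0(f)<\infty$, and set $r\coloneqq\mu_0(f)$. Since $\charac k\neq 2$, Proposition~\ref{prop:onevariable} applies and gives $\charac k\nmid r+1$, that is, $p\nmid r+1$. This means $r+1\not\equiv 0\pmod p$, or equivalently $\mu_0(f)=r\not\equiv -1\pmod{p}$.

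The second statement is a special case of the first. If $\mu_0(f)=p-1$, then $\mu_0(f)+1=p$ is divisible by $p$, which is impossible. Hence no such $f$ exists.

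The only point requiring care is already handled in the proof of Proposition~\ref{prop:onevariable}. There, after discarding the initial terms $a_ix^i$ with $ia_i=0$, one obtains $g$ with $ja_j\neq0$, and finiteness of $\mu_0$ forces $j=r+1$. Thus $(r+1)a_{r+1}\neq 0$ in $k$, so $r+1$ is nonzero in $k$. With that proposition in hand, the corollary needs nothing further.
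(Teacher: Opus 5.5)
Your proof is correct and follows the paper's route exactly. The paper states this corollary without a separate proof, as an immediate consequence of the conclusion $\charac k\nmid \mu_0(f)+1$ in Proposition~\ref{prop:onevariable}; the case $\mu_0(f)=\charac k-1$ is the special case you identify.
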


Together, Example~\ref{ex:onevariable} and Proposition~\ref{prop:onevariable} completely characterize the elements of the Grothendieck--Witt ring of any rank that are $\A1$-Milnor numbers of single-variable polynomials.

Quick, Strand, and Wilson \cite[Lemma~2.6]{QuickStrandWilson} show that every element of $\GW(k)$ of the form $m\hyp+\langle b,c\rangle$, where $m$ is a positive integer, is a local $\A1$-Brouwer degree. Therefore, over any field for which there is a nondegenerate form $\langle b,c\rangle$ that is not equivalent to~$\hyp$, there is a local $\A1$-Brouwer degree of every even rank at least~$4$ that does not arise as the $\A1$-Milnor number of a single-variable polynomial.

We next turn our attention to polynomials in more than one variable. The following example computes $\A1$-Milnor numbers of certain two-variable polynomials.

\begin{example}\label{ex:evenranktwovariables}
Let $k$ be a field of characteristic not~$2$, let $b,c\in k^{\times}$, and let $r\geq4$ be an even integer such that $\charac k$ does not divide $r-1$. Consider the polynomial
\[g=x^2y-\frac{b}{c(r-1)}y^{r-1}.\]
The Jacobian ideal is
\[ J_g=(g_x,\,g_y)=\Bigl(2xy,\, x^2-\frac{b}{c}y^{r-2}\Bigr).\]
This shows that $xy=0$ in $Q_0(\nabla g)$, so $xy^k=0$ for all $k\geq 1$. Since $xf_y=0$ in $Q_0(\nabla g)$ and $xy^{r-2}=0$ as $r\geq 4$, one also has $x^3=0$. Similarly, $yf_y=0$ and $x^2y=0$ shows that $y^j=0$ for all $j\geq r-1$. Also, $f_y\in J_f$ gives $x^2=\frac{b}{c}y^{r-2}$. Therefore $Q_0(\nabla g)$ is an $r$-dimensional vector space with basis $\{1, x, y, y^2, \dots, y^{r-2}\}$. We use the ordered basis
\[ \B\coloneqq\{1,\dots,y^{r/2 -2},x,y^{r/2-1},\dots,y^{r-2}\}.\]
The Hessian determinant is given by
\[\det H_g=-\frac{2br}{c}y^{r-2},\]
and the distinguished socle element by
\[\E{g}=-\frac{2b}{c}y^{r-2}.\]
Therefore we can define $\eta\colon Q_0(\nabla g)\to k$ by $\eta(y^{r-2})=-c/(2b)$ and $\eta(v)=0$ for all other $v\in\B$. The Gram matrix is then the block antidiagonal matrix 
\[\begin{bNiceArray}{ccc|c|ccc}
 & & & & & & -c/(2b)\\
 & & & & & \iddots & \\
 & & & & -c/(2b) & & \\
\hline
 & & & P & & \\
\hline
 & & -c/(2b) & & & & \\
 & \iddots & & & & & \\
-c/(2b) & & & & & & 
\end{bNiceArray},\]
where $P=\begin{bsmallmatrix}
-1/2 & 0\\
0 & -c/(2b)
\end{bsmallmatrix}$.
Applying Lemma~\ref{lemma:antidiagonal} $(r-2)/2$ times, we obtain $\mu_0^{\mathbb{A}^1}(g)=\frac{r-2}{2}\hyp+\langle-1/2, -c/(2b)\rangle$.
\end{example}

Using Example~\ref{ex:evenranktwovariables} together with Corollary~\ref{cor:addterm} allows us to find a family of three-variable polynomials that have $\A1$-Milnor numbers of the form given for rank~$4$ and rank~$6$ in Table~\ref{tab:A1Brouwer}.

\begin{example}\label{ex:evenrankthreevariables}
Let $g$ be as in Example~\ref{ex:evenranktwovariables}, and let 
\[f=g-\frac{1}{4b}z^2=x^2y-\frac{b}{c(r-1)}y^{r-1}-\frac{1}{4b}z^2.\]
Corollary~\ref{cor:addterm} gives 
\[\begin{split}
\mu^{\mathbb{A}^{1}}_0(f) & =\langle -2b \rangle \cdot\mu^{\mathbb{A}^{1}}_0(g)\\
 & =\langle -2b \rangle \cdot\biggl(\biggl(\frac{r-2}{2}\biggr) \hyp+\biggl\langle-\frac{1}{2}, -\frac{c}{2b}\biggr\rangle\biggr)\\ & =\biggl(\frac{r-2}{2}\biggr) \hyp+\langle b,c\rangle.
\end{split}\]
\end{example}

Examples \ref{ex:onevariable} and~\ref{ex:evenrankthreevariables} and Proposition~\ref{prop:increasen} allow us to obtain the following result.

\begin{theorem}\label{thm:allarise}
\begin{enumerate}[label=\textup{(\alph*)},ref=\textup{\alph*}]\thmenumhspace
\item Let $k$ be a field of characteristic not $2$ or~$3$, and let $n\geq1$. Each local $\A1$-Brouwer degree of rank $1$, $2$, $3$, $5$, or~$7$ is the $\A1$-Milnor number of a polynomial map $f\colon\A{n}_k\to \A1_k$. \label{part:allariseeasyranks} 
\item Let $k$ be a field of characteristic not $2$ or~$3$, and let $n\geq3$. Each local $\A1$-Brouwer degree of rank~$4$ is the $\A1$-Milnor number of a polynomial map $f\colon\A{n}_k\to \A1_k$. 
\item Let $k$ be a field of characteristic not $2$ or~$5$, and let $n\geq3$. Each local $\A1$-Brouwer degree of rank~$6$ is the $\A1$-Milnor number of a polynomial map $f\colon\A{n}_k\to \A1_k$. 
\end{enumerate}
\end{theorem}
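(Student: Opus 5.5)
The plan is to realize each entry of Table~\ref{tab:A1Brouwer} explicitly with one of the families already computed, namely Examples~\ref{ex:onevariable} and~\ref{ex:evenrankthreevariables}. We then use Proposition~\ref{prop:increasen} to pass from the number of variables in which a form is realized to any larger number~$n$. Throughout we only need to track which characteristic restrictions the chosen examples impose.

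For part~(\ref{part:allariseeasyranks}), take the odd ranks $r\in\{1,3,5,7\}$. The degrees listed in Table~\ref{tab:A1Brouwer} are $\frac{r-1}{2}\hyp+\langle b\rangle$ with $b\in k^\times$ arbitrary. Example~\ref{ex:onevariable} produces exactly this from the single-variable polynomial $\frac{1}{b(r+1)}x^{r+1}$, provided $\charac k\nmid r+1$. Here $r+1\in\{2,4,6,8\}$, so it suffices that $\charac k\neq 2,3$.

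For rank~$2$, the only degree in the table is $\hyp$. Example~\ref{ex:onevariable} with $r=2$ gives it from $\frac{1}{3b}x^3$, which requires $\charac k\neq 3$. Since all of these realizations use a single variable, Proposition~\ref{prop:increasen} gives them for every $n\geq1$.

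For parts~(b) and~(c), the rank~$4$ and rank~$6$ degrees in the table are $\frac{r-2}{2}\hyp+\langle b,c\rangle$ with $b,c\in k^\times$ arbitrary and $r\in\{4,6\}$. Example~\ref{ex:evenrankthreevariables} realizes exactly this form using the three-variable polynomial
\[x^2y-\frac{b}{c(r-1)}y^{r-1}-\frac{1}{4b}z^2.\]
It requires $\charac k\neq2$ and $\charac k\nmid r-1$. For $r=4$ this means $\charac k\neq 3$, and for $r=6$ it means $\charac k\neq 5$, which matches the stated hypotheses. Proposition~\ref{prop:increasen} then extends each realization to all $n\geq3$.

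No step should be a real obstacle, since the computations were carried out in the examples. The only points to verify are that the parameter ranges in the examples cover every $b$ (and every $b,c$) appearing in Table~\ref{tab:A1Brouwer}, and that the characteristic conditions line up as described.
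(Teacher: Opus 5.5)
Your proposal is correct and is the paper's proof: Example~\ref{ex:onevariable} handles ranks $1,2,3,5,7$ under $\charac k\nmid r+1$, and Example~\ref{ex:evenrankthreevariables} handles ranks $4$ and $6$ under $\charac k\nmid r-1$. Proposition~\ref{prop:increasen} then passes to all larger $n$. Your rank-by-rank bookkeeping of the excluded characteristics is just a more explicit version of what the paper states.
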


\begin{proof}
Table~\ref{tab:A1Brouwer} gives the elements of $\GW(k)$ of rank at most~$7$ that are local $\A1$-Brouwer degrees. When $\charac k$ is not $2$ or~$3$, Example~\ref{ex:onevariable} shows that each of these forms of rank $1$, $2$, $3$, $5$, and~$7$ is the $\A1$-Milnor number of a single-variable polynomial. When $\charac k$ is not~$3$, Example~\ref{ex:evenrankthreevariables} shows that each of these forms of rank~$4$ is the $\A1$-Milnor number of a three-variable polynomial. When $\charac k$ is not~$5$, Example~\ref{ex:evenrankthreevariables} shows that each of these forms of rank~$6$ is the $\A1$-Milnor number of a three-variable polynomial. The result then follows by applying Proposition~\ref{prop:increasen}. 
\end{proof}

\begin{remark}
It is unclear to us whether the restrictions on characteristic in Theorem~\ref{thm:allarise} can be removed. 
\end{remark}

Theorem~\ref{thm:allarise} implies that, over fields of most characteristics, every local $\A1$-Brouwer degree of rank at most~$7$ also arises as the $\A1$-Milnor number of a three-variable polynomial.

\begin{theorem}\label{thm:allarise2}
Let $k$ be a field of characteristic not $2$, $3$, or~$5$. Every element of $\GW(k)$ of rank at most~$7$ that is a local $\A1$-Brouwer degree is also the $\A1$-Milnor number of a polynomial map $\A{3}_k\to \A1_k$. 
\end{theorem}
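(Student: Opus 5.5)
This is essentially a specialization of Theorem~\ref{thm:allarise} to $n=3$. Since the characteristic of $k$ is not $2$, $3$, or~$5$, all three parts of that theorem apply simultaneously. The plan is to go through Table~\ref{tab:A1Brouwer} rank by rank and, for each element listed there, exhibit a polynomial map $\A3_k\to\A1_k$ whose $\A1$-Milnor number is that element.

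\emph{Ranks $1$, $2$, $3$, $5$, $7$.} Example~\ref{ex:onevariable} applies whenever $\charac k\nmid r+1$. The relevant values of $r+1$ are $2,3,4,6,8$, and none of them is divisible by a prime other than $2$ or~$3$, so the example applies. It gives single-variable polynomials realizing $\frac{r}{2}\hyp$ for even $r$ and $\frac{r-1}{2}\hyp+\langle b\rangle$ for odd $r$, with $b\in k^\times$ arbitrary. These cover every entry of the table in these ranks. To pass to three variables I would use Proposition~\ref{prop:increasen} with $n=1$ and $n'=3$. Concretely, I add $\frac12 y^2+\frac12 z^2$, which by Corollary~\ref{cor:addterm} multiplies the form by $\langle 1\rangle$ twice and so does not change it.

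\emph{Ranks $4$ and $6$.} Here I use Example~\ref{ex:evenrankthreevariables} with $r=4$ and $r=6$, which requires $\charac k\nmid 3$ and $\charac k\nmid 5$ respectively; both hold by hypothesis. That example gives the three-variable polynomial
\[
x^2y-\tfrac{b}{c(r-1)}y^{r-1}-\tfrac{1}{4b}z^2
\]
with $\A1$-Milnor number $\frac{r-2}{2}\hyp+\langle b,c\rangle$ for arbitrary $b,c\in k^\times$. This accounts for all rank $4$ and rank $6$ entries of the table.

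There is no real obstacle, because the analytic content is already in Examples~\ref{ex:onevariable}, \ref{ex:evenranktwovariables} and~\ref{ex:evenrankthreevariables}. The only point that needs care is the bookkeeping of characteristic conditions: I need to confirm that excluding $2$, $3$, and~$5$ covers every divisibility hypothesis, namely $r+1\in\{2,3,4,6,8\}$ in the odd and easy ranks and $r-1\in\{3,5\}$ in ranks $4$ and~$6$. I also need to confirm that Table~\ref{tab:A1Brouwer} is exhaustive for rank at most $7$, which is exactly the cited result of Quick, Strand, and Wilson.
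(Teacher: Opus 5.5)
Your proposal is correct and takes essentially the same route as the paper. The paper deduces this theorem from Theorem~\ref{thm:allarise}, whose proof uses Example~\ref{ex:onevariable} for ranks $1$, $2$, $3$, $5$, $7$, Example~\ref{ex:evenrankthreevariables} for ranks $4$ and~$6$, and Proposition~\ref{prop:increasen} to pass to three variables, with the same characteristic bookkeeping you carry out.
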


With the case of single-variable polynomials fully resolved and the case of polynomials in three or more variables resolved for rank at most~$7$, we now turn to the question of which elements of the Grothendieck--Witt ring are $\A1$-Milnor numbers of two-variable polynomials. Note that Theorem~\ref{thm:allarise}(\ref{part:allariseeasyranks}) shows that all local $\A1$-Brouwer degrees of rank $1$, $2$, $3$, $5$, or~$7$ arise as $\A1$-Milnor numbers of two-variable polynomials. After a detailed examination of Newton diagrams in Sections~\ref{sec:Newtonbackground} and~\ref{sec:classifynewton} and Newton degenerate polynomials with Milnor number at most~$8$ in Section~\ref{sec:change}, we classify the Grothendieck--Witt classes of ranks $4$, $6$, and~$8$ that arise as $\A1$-Milnor numbers of two-variable polynomials 
in Section~\ref{sec:classifyA1Milnor}.

\section{Newton diagrams, Newton nondegeneracy, and Kushnirenko's bound}\label{sec:Newtonbackground}

In this section, we review background about Newton diagrams, Newton nondegeneracy, Newton numbers, and Kushnirenko's bound. We will use this material in Section~\ref{sec:classifynewton} when we find all Newton diagrams with small Newton number. Then, in Section~\ref{sec:classifyA1Milnor}, we will use these Newton diagrams together with Kushnirenko's bound to characterize the elements of the Grothendieck--Witt ring of rank at most~$8$ that arise as $\A1$-Milnor numbers of two-variable polynomials.

We first recall the notion of a Newton diagram in~$\R^2$ as given in \cite{Mondal, Stevens, Wall}. (Note that \cite{Wall} instead refers to this as a Newton polygon.) Let $\Ac$ be a subset of $(\mathbb{Z}_{\geq0})^2$. The \emph{Newton polyhedron} $\Npoly_+(\Ac)$ is the convex hull of the set $\bigcup_{(i,j)\in \Ac} \bigl( (i,j)+(\R_{\geq 0})^2\bigr) \subseteq \R^2$. The \emph{Newton diagram} $\Npoly(\Ac)$ is the union of the compact faces of $\Npoly_+(\Ac)$. If a set~$\Npoly$ arises as $\Npoly(\mathcal{A})$ for some such~$\mathcal{A}$, we say that $\Npoly$ is a Newton diagram. The \emph{Newton polytope} $\Npoly_{-}(\Ac)$ is the union of all segments connecting the origin to points in $\Npoly(\Ac)$. We say that a Newton diagram $\Npoly(\Ac)$ is \emph{convenient} if it contains a point on both axes of~$\R^2$, which happens exactly when $\Ac$ contains a point on both axes. Given a polynomial $f=\sum_{i,j}a_{i,j}x^{i}y^{j}\in k[x,y]$, define 
\[\supp f=\{ (i,j)\in (\mathbb{Z}_{\geq0})^2\mid a_{i,j}\neq 0\}.\] 
We define the Newton polyhedron $\Npoly_+(f)$ to be $\Npoly_+(\supp f)$, the Newton diagram $\Npoly(f)$ to be $\Npoly(\supp f)$, and the Newton polytope $\Npoly_-(f)$ to be $\Npoly_-(\supp f)$. The polynomial~$f$ is called convenient if $\Npoly(f)$ is convenient.

We now state a result that allows us to consider only convenient polynomials when finding all elements of $\GW(k)$ that are $\A1$-Milnor numbers of two-variable polynomials. This follows quickly from \cite{KassWickelgren2019, QuickStrandWilson} and shows that polynomials that agree up to terms of a certain order have equal $\A1$-Milnor numbers.

\begin{lemma}[\cite{KassWickelgren2019, QuickStrandWilson}]\label{lemma:ne}
Let $k$ be a field of characteristic not~$2$, and let $f\colon\mathbb{A}_k^n\to\mathbb{A}_k^1$ be a polynomial map with $f(0)=0$ and $1\leq\mu_0(f)<\infty$. Let $r\coloneqq\mu_0(f)$. If $g\colon\mathbb{A}_k^n\to\mathbb{A}_k^1$ is a polynomial map with $f-g\in(x_1, \dots, x_n)^{r+2}$ in the local ring $k[x_1, \dots, x_n]_{(x_1, \dots, x_n)}$, then 
\begin{enumerate}[label=\textup{(\alph*)},ref=\textup{\alph*}]
\item\label{part:nea} $(f_{x_1},\dots,f_{x_n})=(g_{x_1},\dots,g_{x_n})$ in $k[x_1, \dots, x_n]_{(x_1, \dots, x_n)}$, 
\item\label{part:neb} $Q_0(\nabla f)=Q_0(\nabla g)$, and
\item\label{part:nec} $\mu_0^{\mathbb{A}^{1}}(f)=\mu_0^{\mathbb{A}^{1}}(g)$.
\end{enumerate}
\end{lemma}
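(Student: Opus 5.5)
The plan is to reduce everything to the standard fact that, in a local ring, an ideal of finite colength contains a power of the maximal ideal. I will then show that the difference of the gradients lies in a power of $\mathfrak m$ large enough to be absorbed into the Jacobian ideal.

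Write $R=k[x_1,\dots,x_n]_{\mathfrak m}$, $I=(f_{x_1},\dots,f_{x_n})R$ and $J=(g_{x_1},\dots,g_{x_n})R$. The first step is to show that $\mathfrak m^{r}R\subseteq I$. Since $\dim_k R/I=r$, consider the descending chain
\[R\supseteq I+\mathfrak m R\supseteq I+\mathfrak m^2R\supseteq\cdots\supseteq I+\mathfrak m^{r}R\supseteq I+\mathfrak m^{r+1}R.\]
This chain has $r+2$ terms, and all of them contain $I$. The first inclusion is strict, because $I\subseteq\mathfrak m$ since $\mu_0(f)\geq1$. By a dimension count, two consecutive terms must therefore coincide, say $I+\mathfrak m^{i}R=I+\mathfrak m^{i+1}R$ with $i\leq r$. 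Applying Nakayama's lemma to the module $(I+\mathfrak m^i R)/I$ gives $\mathfrak m^iR\subseteq I$, and hence $\mathfrak m^rR\subseteq I$.

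Next, the hypothesis $f-g\in\mathfrak m^{r+2}R$ must be turned into a statement about derivatives. Here I have to be careful, since the hypothesis is stated in the localization. The difference $h=f-g$ is a polynomial, so membership in $\mathfrak m^{r+2}R$ means $uh\in\mathfrak m^{r+2}$ for some polynomial $u$ with $u(0)\neq0$. Because $\mathfrak m^{r+2}$ is $\mathfrak m$-primary in $k[x]$ and $u\notin\mathfrak m$, this already gives $h\in\mathfrak m^{r+2}$ as polynomials; concretely, the lowest-degree homogeneous part of $uh$ is $u(0)$ times that of $h$. Differentiating lowers the order of vanishing by at most one, so $h_{x_i}\in\mathfrak m^{r+1}\subseteq\mathfrak m\cdot\mathfrak m^{r}R\subseteq\mathfrak m I$. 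Thus $g_{x_i}=f_{x_i}-h_{x_i}$ lies in $I$, which gives $J\subseteq I$. Moreover $I\subseteq J+\mathfrak m I$, and Nakayama's lemma (with $I$ finitely generated) yields $I=J$. This proves~(\ref{part:nea}), and (\ref{part:neb}) follows immediately.

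For (\ref{part:nec}), the quotients $Q_0(\nabla f)$ and $Q_0(\nabla g)$ are the same algebra, so it remains to show that the distinguished socle elements agree; a common choice of $\eta$ then gives identical Gram matrices. I would use the Scheja--Storch independence: $E_0(F)$ depends only on the ideal and on the transition matrix between generators. Writing $\nabla g=A\,\nabla f$ with $A\equiv \mathrm{Id}$ modulo $\mathfrak m$, we get $E_0(\nabla g)=\det(A)\,E_0(\nabla f)$, and since $E_0$ lies in the socle, $\det A\equiv1$ acts as $1$. To obtain such an $A$, note that $h_{x_i}\in\mathfrak m I$, so $h_{x_i}=\sum_j c_{ij}f_{x_j}$ with $c_{ij}\in\mathfrak m$, which gives $A=\mathrm{Id}-(c_{ij})$.

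I expect this last step to be the main obstacle. The claim that the socle element transforms by the determinant of the transition matrix is the subtle input, and rather than reprove it I would cite \cite{KassWickelgren2019} or \cite[Lemma~2.9]{QuickStrandWilson}. The alternative route is to observe that $\det J_{\nabla g}-\det J_{\nabla f}$ lies in $\mathfrak m^{r}\subseteq I$ when $\charac k\nmid r$, but this misses the case where the characteristic divides the rank.
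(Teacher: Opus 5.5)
Your proposal is correct and follows the paper's argument. The paper notes that $f_{x_i}-g_{x_i}\in\mathfrak m^{r+1}$ and applies \cite[Lemma~2.9]{QuickStrandWilson} to $\nabla f,\nabla g$; you reprove parts (a) and (b) of that lemma via Nakayama and cite it for (c). For (c) you can also bypass the transformation law: with $f_{x_i}=\sum_j x_jF_{i,j}$ and $h_{x_i}=\sum_j x_jH_{i,j}$, where the $H_{i,j}$ are polynomials in $\mathfrak m^{r}\subseteq I$, you get $E_0(\nabla g)=\det(F_{i,j}-H_{i,j})=\det(F_{i,j})=E_0(\nabla f)$ in the common quotient, even when the characteristic divides $r$.
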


\begin{proof}
Since $f-g\in (x_1, \dots, x_n)^{r+2}$, we have $f_{x_i}-g_{x_i}\in (x_1, \dots, x_n)^{r+1}$ for all $i\in[n]$. The maps $\nabla f,\nabla g\colon\mathbb{A}_k^n\to\mathbb{A}_k^n$ therefore satisfy the hypotheses of \cite[Lemma~2.9]{QuickStrandWilson}, and this gives (\ref{part:nea}), (\ref{part:neb}), and~(\ref{part:nec}). 
\end{proof}

Lemma~\ref{lemma:ne} shows that changing terms with total degree at least $r+2$ in~$f$ changes neither the local algebra $Q_0(\nabla f)$ nor the $\A1$-Milnor number $\mu_0^{\mathbb{A}^1}(f)$. Hence, when working with polynomials~$f$ with $\mu_0(f)=r<\infty$, the monomial terms with total degree at least $r+2$ can be chosen arbitrarily. In particular, when working with a polynomial $f\in k[x,y]$, we can remove all terms with degree at least $r+2$ and assume that 
\[f= \sum_{\substack{i,j\\1\leq i+j\leq r+1}}a_{i,j}x^iy^j.\]
Moreover, we can use the same lemma to add $x^{r+2}$ and/or $y^{r+2}$ without changing the $\A1$-Milnor number. Thus it suffices to consider convenient 
polynomials of the form
\[\sum_{\substack{i,j\\1\leq i+j\leq r+1}}a_{i,j}x^iy^j+x^{r+2}+y^{r+2}.\]

Given a subset $\Delta\subseteq\Npoly(f)$, let $f_{\Delta}$ be the polynomial consisting of terms of~$f$ corresponding to points in~$\Delta$. Following Kushnirenko, we say a polynomial~$f$ is \emph{Newton nondegenerate} on a closed face $\Delta\subseteq\Npoly(f)$ if the polynomials
\[x_1\frac{\partial f_{\Delta}}{\partial x_1},\, \dots,\, x_n\frac{\partial f_{\Delta}}{\partial x_n}\]
have no common zero in $\bigl( \overline{k}^{\times} \bigr)^n$, where $\overline{k}$ is the algebraic closure of~$k$. Otherwise, $f$ is \emph{Newton degenerate} on~$\Delta$. A polynomial~$f$ is \emph{Newton nondegenerate} if it is Newton nondegenerate on all closed faces of $\Npoly(f)$ and \emph{Newton degenerate} if it is Newton degenerate on some closed face of $\Npoly(f)$. 

The following lemma gives necessary and sufficient conditions for Newton nondegeneracy on a vertex of a Newton diagram. 

\begin{lemma}\label{lemma:atvertex}
Let $k$ be a field, let $i$ and~$j$ be nonnegative integers, and let $a\in k^\times$. The polynomial $f=ax^iy^j$ is Newton nondegenerate if and only if $(i,j)\neq(0,0)$ in~$k^2$.
\end{lemma}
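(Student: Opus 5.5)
The plan is to unwind the definition of Newton nondegeneracy for the monomial $f=ax^iy^j$. Its Newton diagram is the single point $(i,j)$, so the only closed face is $\Delta=\{(i,j)\}$ and $f_\Delta=f$. Thus $f$ is Newton nondegenerate exactly when the two polynomials
\[x\frac{\partial f}{\partial x}=iax^iy^j,\qquad y\frac{\partial f}{\partial y}=jax^iy^j\]
have no common zero in $\bigl(\overline{k}^{\times}\bigr)^2$. Here $i$ and $j$ are read as elements of $k$ via the canonical map $\Z\to k$, which is what the condition $(i,j)\neq(0,0)$ in $k^2$ refers to.

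For one direction, suppose $(i,j)\neq(0,0)$ in $k^2$, say $i\neq0$ in $k$; the case $j\neq 0$ is symmetric. At any point $(x_0,y_0)\in\bigl(\overline{k}^{\times}\bigr)^2$ we have $x_0^iy_0^j\neq0$, because both coordinates are nonzero and $\overline{k}$ is a field. Since $a\neq0$ and $i\neq0$ in $k$, the value $iax_0^iy_0^j$ is a product of nonzero elements and hence nonzero. So $x f_x$ does not vanish anywhere on the torus, and there is no common zero.

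For the converse, suppose $i=j=0$ in $k$. Then both $xf_x$ and $yf_y$ are the zero polynomial, so every point of the torus, for example $(1,1)$, is a common zero, and $f$ is Newton degenerate.

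None of these steps is difficult. The only thing to be careful about is interpreting the integer coefficients $i$ and $j$ in positive characteristic, and noting that the torus $\bigl(\overline{k}^{\times}\bigr)^2$ is nonempty, which makes the degenerate case genuinely degenerate.
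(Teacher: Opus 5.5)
Your proof is correct and follows essentially the same route as the paper. In the case $i=j=0$ in $k$ both derivatives vanish identically, so every torus point is a common zero; otherwise, say $i\neq 0$ in $k$, the single term $iax^iy^j$ is nonzero at every point of the torus. Your version is also a bit more literal than the paper's, since you work with $x\frac{\partial f}{\partial x}$ and $y\frac{\partial f}{\partial y}$ as in the definition, while the paper works with $f_x$.
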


\begin{proof}
If $(i,j)=(0,0)$ in $k^2$, then $f_x=0$ and $f_y=0$. Therefore any $(x,y)\in(k^\times)^2$ is a solution of $f_x=0$ and $f_y=0$, making $f$ Newton degenerate. If $(i,j)\neq(0,0)$ in~$k^2$, then without loss of generality we can suppose $i\neq0$ in~$k$. Thus $ia\neq0$, so $f_x=iax^{i-1}y^j$ can have a zero only when $x=0$ or $y=0$, making $f$ Newton nondegenerate. 
\end{proof}

\begin{remark}\label{remark:degeneracy}
When $\charac k=0$, Lemma~\ref{lemma:atvertex} shows that one only needs to check Newton nondegeneracy of a two-variable polynomial~$f$ on edges $\Delta\subseteq \Npoly(f)$. When $\charac k>0$, the lemma allows one to reduce the number of vertices of $\Npoly(f)$ at which Newton nondegeneracy needs to be checked.
\end{remark}

This following lemma gives sufficient conditions for Newton nondegeneracy on an edge of a Newton diagram. 

\begin{lemma}\label{lemma:onboundary} 
Let $k$ be a field, $f\in k[x,y]$, and let $e$ be an edge on $\Gamma(f)$. Suppose $f_e=a_{i_0,j_0}x^{i_0}y^{j_0}+ a_{i_1,j_1}x^{i_1}y^{j_1}$.
\begin{enumerate}[label=\textup{(\alph*)},ref=\textup{\alph*}]
\item \label{lemma:onboundary-a} If $i_1=0$ in~$k$ and $i_0\neq 0$ in~$k$, then $f$ is Newton nondegenerate on~$e$. 
\item \label{lemma:nointermediatepoint} If $i_0j_1-i_1j_0\neq0$ in~$k$, then $f$ is Newton nondegenerate on~$e$.
\end{enumerate}
\end{lemma}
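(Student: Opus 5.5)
The plan is to work directly from the definition of Newton nondegeneracy on the edge~$e$. Since $e$ is an edge of $\Npoly(f)$ whose only lattice points carrying nonzero coefficients are $(i_0,j_0)$ and $(i_1,j_1)$, we have $f_e=a x^{i_0}y^{j_0}+c x^{i_1}y^{j_1}$ with $a=a_{i_0,j_0}$ and $c=a_{i_1,j_1}$ both nonzero (they are vertices of $\Npoly(f)$, hence in $\supp f$). We must show that
\[x\frac{\partial f_e}{\partial x}=i_0a x^{i_0}y^{j_0}+i_1c x^{i_1}y^{j_1},\qquad y\frac{\partial f_e}{\partial y}=j_0a x^{i_0}y^{j_0}+j_1c x^{i_1}y^{j_1}\]
have no common zero $(x,y)\in(\overline{k}^{\times})^2$. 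On the torus the monomials $u\coloneqq a x^{i_0}y^{j_0}$ and $v\coloneqq c x^{i_1}y^{j_1}$ are both nonzero, and this is the key observation in both parts.

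For part~(\ref{lemma:nointermediatepoint}), suppose $(x,y)$ is a common zero in $(\overline{k}^{\times})^2$. Then the vector $(u,v)$ satisfies the linear system
\[\begin{bmatrix} i_0 & i_1\\ j_0 & j_1\end{bmatrix}\begin{bmatrix}u\\ v\end{bmatrix}=0\]
over~$\overline{k}$. The determinant is $i_0j_1-i_1j_0$, which is nonzero in~$k$ by hypothesis. Hence $u=v=0$, contradicting $u,v\neq0$. So no common zero exists, and $f$ is Newton nondegenerate on~$e$.

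For part~(\ref{lemma:onboundary-a}), the $x$-equation reduces to $i_0u+i_1v=i_0u=0$ because $i_1=0$ in~$k$. Since $i_0\neq0$ in~$k$, this forces $u=0$, which is impossible on the torus. Hence the single polynomial $x\,\partial f_e/\partial x$ already has no zero in $(\overline{k}^{\times})^2$, and $f$ is nondegenerate on~$e$.

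I expect no real obstacle here. The only point needing care is the justification that $a$ and $c$ are nonzero, namely that the endpoints of an edge of the Newton diagram lie in $\supp f$. This holds because the vertices of $\Npoly_+(\supp f)$ are points of $\supp f$.
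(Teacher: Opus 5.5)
Your proof is correct and matches the paper's argument. For (b), the paper likewise treats the two monomials, which cannot vanish on the torus, as the unknowns of a $2\times 2$ linear system. It keeps the coefficients in the matrix, giving determinant $(i_0j_1-i_1j_0)a_{i_0,j_0}a_{i_1,j_1}$, whereas you absorb them into $u,v$. For (a), it notes that $\partial f_e/\partial x$ reduces to the single monomial $i_0a_{i_0,j_0}x^{i_0-1}y^{j_0}$. Your explicit check that both coefficients are nonzero (the endpoints of $e$ are vertices and so lie in $\supp f$) is a point the paper uses only implicitly.
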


\begin{proof}
For~(\ref{lemma:onboundary-a}), we have that $\frac{\partial f_e}{\partial x}=i_0a_{i_0,j_0}x^{i_0-1}y^{j_0}$. Since $i_0a_{i_0,j_0}\neq0$, it follows that $\frac{\partial f_e}{\partial x}=0$ only when $x=0$ or $y=0$.

For~(\ref{lemma:nointermediatepoint}), regarding 
\[\begin{aligned}
x\frac{\partial f_e}{\partial x}
&= i_0a_{i_0,j_0}x^{i_0}y^{j_0}+ i_1a_{i_1,j_1}x^{i_1}y^{j_1}=0\\
y\frac{\partial f_e}{\partial y}
&= j_0a_{i_0,j_0}x^{i_0}y^{j_0}+ j_1a_{i_1,j_1}x^{i_1}y^{j_1}=0
\end{aligned}\]
as a system of linear equations with unknowns $x^{i_0}y^{j_0}$ and $x^{i_1}y^{j_1}$, the determinant of the coefficient matrix is $(i_0j_1-i_1j_0)a_{i_0,j_0}a_{i_1,j_1}$, which is nonzero by hypothesis. Therefore the only solution is when $x^{i_0}y^{j_0}=0$ and $x^{i_1}y^{j_1}=0$, which is only possible when $x=0$ or $y=0$, so $f$ is Newton nondegenerate on the edge between $(i_0,j_0)$ and $(i_1,j_1)$. 
\end{proof}

Associated to a Newton diagram~$\Npoly$ is its Newton number, denoted by $\nu(\Npoly)$. Let $\Npoly$ be a convenient Newton diagram, let $(M,0)$ be the point where $\Npoly$ intersects the horizontal axis, and let $(0,N)$ be the point where $\Npoly$ intersects the vertical axis. The \emph{Newton number} of~$\Npoly$ is
\[\nu(\Npoly) = 2\Area(\Npoly_-)-M-N+1.\]
When $f\in k[x,y]$ is convenient, we define the \emph{Newton number} of~$f$, which we denote by $\nu(f)$, to be $\nu(\Npoly(f))$. When $f$ is not convenient, $\nuf$ is defined as $\sup_{i, j\in\mathbb{Z}_{>0}}\nu(f+x^i+y^j)$.

The next lemma follows immediately from the definition of Newton number.

\begin{lemma}\label{lemma:newtonpolygonareas}
Let $k$ be a field, and let $f,g\in k[x,y]$ be convenient polynomials whose Newton diagrams share the same points on the coordinate axes. If $\Npoly_-(f)\subsetneq \Npoly_-(g)$, then $\nuf< \nug$.
\end{lemma}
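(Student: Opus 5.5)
Since $f$ and $g$ are convenient with the same axis points $(M,0)$ and $(0,N)$, the definition $\nu(\Npoly)=2\Area(\Npoly_-)-M-N+1$ gives
\[\nu(g)-\nu(f)=2\bigl(\Area(\Npoly_-(g))-\Area(\Npoly_-(f))\bigr).\]
So it is enough to show that the strict inclusion $\Npoly_-(f)\subsetneq\Npoly_-(g)$ forces $\Area(\Npoly_-(f))<\Area(\Npoly_-(g))$. Monotonicity of area under inclusion already gives $\Area(\Npoly_-(f))\le\Area(\Npoly_-(g))$, so only strictness needs an argument.

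For strictness, I will use that each Newton polytope is a compact convex polygon. In the convenient case, $\Npoly_-(f)$ is the region bounded by the two coordinate axes and the Newton diagram $\Npoly(f)$. This region is the convex hull of the origin, $(M,0)$, $(0,N)$ and the vertices of $\Npoly(f)$, because $\Npoly_+(f)$ is convex and its compact boundary curve is star-shaped with respect to the origin. It is therefore the closure of its interior. Now pick a point $p\in\Npoly_-(g)\setminus\Npoly_-(f)$. Since $\Npoly_-(f)$ is closed, there is a small disk $D$ around $p$ that misses $\Npoly_-(f)$. Since $\Npoly_-(g)$ is a convex polygon with nonempty interior (because $M,N\ge1$), the set $D\cap\Npoly_-(g)$ contains an open subset: $p$ lies in $\Npoly_-(g)$, which is the closure of its interior, so interior points of $\Npoly_-(g)$ lie arbitrarily close to $p$. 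This open set has positive area and lies in $\Npoly_-(g)\setminus\Npoly_-(f)$. Hence $\Area(\Npoly_-(g))>\Area(\Npoly_-(f))$, and therefore $\nu(f)<\nu(g)$.

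The only step that needs real care is checking that $\Npoly_-$, as the paper defines it (the union of segments from the origin to $\Npoly$), is a closed convex region equal to the closure of its interior. I will verify this by writing $\Npoly_-(f)=\{t\,q : q\in\Npoly(f),\ 0\le t\le1\}$. Because $\Npoly(f)$ is a compact polygonal arc from $(0,N)$ to $(M,0)$ that meets each ray from the origin in the first quadrant exactly once, this set is the compact region lying under that arc. Once that description is established, everything else follows directly from the definition.
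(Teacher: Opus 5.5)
Your reduction is the same as the paper's, which treats the lemma as immediate. Since $M$ and $N$ are shared, $\nug-\nuf=2\bigl(\Area(\Npoly_-(g))-\Area(\Npoly_-(f))\bigr)$. Your added argument for strictness of the area inequality is the right idea, but it rests on a false claim. In general $\Npoly_-(f)$ is not convex, and it is not the convex hull of the origin, $(M,0)$, $(0,N)$, and the vertices of $\Npoly(f)$. Because $\Npoly_+(f)$ is convex, each vertex of $\Npoly(f)$ off the axes is a reflex corner of $\Npoly_-(f)$. For example, suppose $\Npoly(f)$ has vertices $(0,5)$, $(1,1)$, $(5,0)$. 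Your convex hull is then the triangle with vertices $(0,0)$, $(5,0)$, $(0,5)$, which contains $(2,2)$. But $(2,2)\notin\Npoly_-(f)$, because the diagonal ray meets $\Npoly(f)$ at $(1,1)$. So the step ``$\Npoly_-(g)$ is a convex polygon, hence the closure of its interior'' is not justified as written.

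The repair is short, because the only property your disk argument uses is that $\Npoly_-(g)$ is the closure of its interior. By definition, $\Npoly_-(g)$ is the union, over the finitely many edges $e$ of $\Npoly(g)$, of the closed triangles with vertices the origin and the two endpoints of $e$. Each such triangle is nondegenerate. The edge $e$ has finite negative slope and two distinct points in the closed first quadrant, whereas a line of negative slope through the origin meets that quadrant only at the origin. A finite union of nondegenerate closed triangles is the closure of its interior. With this in place of convexity, your argument goes through: pick $p\in\Npoly_-(g)\setminus\Npoly_-(f)$, take a disk around $p$ missing the closed set $\Npoly_-(f)$, and find an interior point of $\Npoly_-(g)$ in that disk. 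This gives $\Area(\Npoly_-(f))<\Area(\Npoly_-(g))$ and hence $\nuf<\nug$. Your star-shaped description in the last paragraph (the compact region under an arc meeting each ray once) is correct and would also suffice; just drop the word ``convex.''
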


An important result of Kushnirenko shows that the Milnor number of a polynomial is bounded below by its Newton number, and that this bound is an equality under additional hypotheses. 

\begin{theorem}[Kushnirenko {\cite[Th\'eor\`eme~I]{Kushnirenko}}]\label{thm:kushnirenko}
Let $k$ be a field, and let $f\colon\A2_k\to\A1_k$ be a polynomial map with $f(0)=0$. Then 
\[\mu_0(f)\geq \nu(f).\]
Moreover, if $f$ is convenient and Newton nondegenerate, 
then 
\[\mu_0(f)= \nu(f).\]
\end{theorem}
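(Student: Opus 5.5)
Since this is Kushnirenko's theorem, we could simply cite \cite{Kushnirenko}. For completeness, here is the route we would take in the two-variable case. It goes through intersection multiplicities of plane curve germs. In $k[x,y]_{\mathfrak m}$ put $i(g,h)\coloneqq\dim_k k[x,y]_{\mathfrak m}/(g,h)$, so that $\mu_0(f)=i(f_x,f_y)$.

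\textbf{Reduction to the convenient case.} If $\mu_0(f)=\infty$ there is nothing to prove. Otherwise Lemma~\ref{lemma:ne} lets us add $x^i+y^j$ with $i,j$ large without changing $\mu_0(f)$. Since $\nu(f)$ is defined as $\sup\nu(f+x^i+y^j)$, it suffices to prove the inequality for convenient~$f$.

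\textbf{Core identity.} Additivity of intersection multiplicity, used together with finiteness, gives
\[i(xf_x,\,yf_y)=i(x,y)+i(x,f_y)+i(f_x,y)+i(f_x,f_y)=1+i(x,f_y)+i(f_x,y)+\mu_0(f).\]
Let $(M,0)$ and $(0,N)$ be the points of $\Npoly(f)$ on the axes. Then $f_y(0,y)$ and $f_x(x,0)$ have orders at most $N-1$ and $M-1$ respectively, with equality when $M,N\neq0$ in~$k$. The other key ingredient is a local Bernstein--Kushnirenko statement. The two germs $xf_x$ and $yf_y$ have supports contained in $\Npoly_+(f)$, and we claim
\[i(xf_x,yf_y)\geq 2\Area(\Npoly_-(f)),\]
with equality when no edge of $\Npoly(f)$ has initial forms $(xf_x)_e$ and $(yf_y)_e$ sharing a common zero in $(\overline k^\times)^2$. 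That hypothesis is exactly Newton nondegeneracy. Combining the identity with these estimates yields $\mu_0(f)=2\Area-M-N+1$ in the nondegenerate case, and $\mu_0(f)\geq\nu(f)$ in general.

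\textbf{Proving the local BKK claim.} Pass to $\overline k$. Parametrize the branches of the curve $yf_y=0$; in positive characteristic we would use Hamburger--Noether expansions instead of Puiseux series. Write $i(xf_x,yf_y)=\sum_\gamma \operatorname{ord}_t (xf_x)(\gamma(t))$, summed over the branches~$\gamma$. A branch whose tangent direction is governed by an edge~$e$ of slope $-p/q$ has the form $x=t^{q}(\cdots)$, $y=t^{p}(\cdots)$. On such a branch, $\operatorname{ord}_t(xf_x)(\gamma)$ is at least the value of the supporting linear function of~$e$ on $\Npoly_+(f)$, and equality holds unless the leading coefficients cancel. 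The number of such branches, counted with multiplicity, is read off from the Newton polygon of $yf_y$, which coincides with $\Npoly(f)$. Summing over edges turns the total into $\sum_e(\text{lattice length})\cdot(\text{support value})$, which is $2\Area(\Npoly_-)$. Cancellation at leading order happens precisely when the edge initial forms share a torus root.

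\textbf{Main obstacle.} The hard part is carrying this out over an arbitrary field, and positive characteristic in particular. There the Newton diagrams of $xf_x$ and $yf_y$ may be strictly smaller than $\Npoly(f)$, because coefficients $ia_{i,j}$ or $ja_{i,j}$ can vanish. To handle this we would verify that such vanishing only increases the intersection number relative to the area formula, so the inequality survives. We would also check that in the nondegenerate convenient case the hypothesis forces $M,N\neq0$ in~$k$, or else the corresponding correction terms cancel, so that equality survives as well.
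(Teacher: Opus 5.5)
The paper gives no proof of this theorem; it simply quotes Kushnirenko. Your route, via $i(xf_x,yf_y)=1+i(x,f_y)+i(f_x,y)+\mu_0(f)$ plus a local Bernstein--Kushnirenko bound, is a reasonable strategy in characteristic $0$. As written, however, it does not cover an arbitrary field.

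\textbf{The order estimate is backwards, so the positive-characteristic case is not handled.} Since $a_{0,j}=0$ for $j<N$, we have $i(x,f_y)=\operatorname{ord}_y f_y(0,y)\geq N-1$, not $\leq$. Equality holds exactly when $N\neq0$ in $k$, and similarly $i(f_x,y)\geq M-1$. These terms are subtracted when you solve for $\mu_0(f)$, so the argument yields $\mu_0(f)\geq\nu(f)$ only when $M,N\neq0$ in $k$. Your proposed repair (``such vanishing only increases the intersection number'') therefore points the wrong way.

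Worse, the identity can carry no information at all. Take $\charac k=5$ and $f=x^2+xy^2+y^5$. Then $f_x=2x+y^2$ and $f_y=2xy$, so $\mu_0(f)=\dim_k k[y]/(y^3)=3=\nu(f)$. Yet $x\mid f_y$ forces $i(x,f_y)=i(xf_x,yf_y)=\infty$. In this situation the inequality needs an argument that does not pass through $xf_x$ and $yf_y$.

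\textbf{The local Bernstein--Kushnirenko claim is only sketched, and the sketch has a false step.} That claim is essentially the whole theorem. The false step is that the Newton polygon of $yf_y$ does not coincide with $\Gamma(f)$, because $yf_y$ has no monomials on the $x$-axis. For example, $f=x^2+y^5$ gives $yf_y=5y^5$. Its only branch is $y=0$, which is not a branch attached to the edge of $\Gamma(f)$. In general the branches you sum over are governed by $\Gamma(yf_y)$ together with axis branches.

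What you actually need is therefore a monotonicity statement for mixed covolumes of the non-convenient polyhedra $\Gamma_+(xf_x),\Gamma_+(yf_y)\subseteq\Gamma_+(f)$, with a face-by-face criterion for equality. That criterion must include vertices, not just edges. Take $\charac k=3$ and $f=x^7+x^3y^3+y^8$.
\begin{itemize}
\item On the left edge, $x\partial_x f_e=0$ and $y\partial_y f_e=2y^8$.
\item On the right edge, $x\partial_x f_e=x^7$ and $y\partial_y f_e=0$.
\end{itemize}
So your edge condition holds. Nevertheless $i(xf_x,yf_y)=i(x^7,2y^8)=56>45=2\Area(\Gamma_-(f))$, and $\mu_0(f)=42>31=\nu(f)$.

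Hence your assertion that the edge condition ``is exactly Newton nondegeneracy'' is false in positive characteristic. It becomes correct only after adding the vertex conditions in the paper's definition, here the degenerate vertex $(3,3)$.
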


\begin{remark}\label{rem:localizationpowerseries}
The result Kushnirenko proved in \cite[Th\'eor\`eme~I]{Kushnirenko} holds more generally for polynomial maps $\A{n}_k\to\A1_k$, but we only state it for maps with $n=2$ as we have only defined Newton numbers for two-variable polynomials. It also holds for polynomials that are not convenient under the additional hypothesis that $\charac k=0$. Kushnirenko states his result in terms of formal power series, but, as $Q_0(\nabla f)$ is an Artinian local ring, it is isomorphic to $k[[x,y]]/(f_x,f_y)$ (see \cite[Chapter 10]{AtiyahMacdonald}). We choose to work with the localized ring throughout this paper.
\end{remark}

\begin{remark}\label{rem:nondegenerate}
When $\charac k=0$ and $f(0)=0$, the Tjurina number of~$f$ is finite exactly when its Milnor number is finite. However, when $k$ has positive characteristic, $f$ may have an isolated singularity at the origin with infinite Milnor number (i.e.\ $1\leq \tau_0(f)<\infty$ and $\mu_0(f)=\infty$). When $f$ is convenient and Newton nondegenerate, Theorem~\ref{thm:kushnirenko} implies that $\mu_0(f)$ is finite. Thus, when $f(0)=0$ and $f$ is convenient and Newton nondegenerate, $\tau_0(f)$ is finite if and only if $\mu_0(f)$ is finite without any restriction on characteristic. 
\end{remark}

\begin{example}\label{ex:running2}
We illustrate Theorem~\ref{thm:kushnirenko} by continuing with the polynomial~$f$ from Example~\ref{ex:running}. As 
\[f = (x+y^{2})^{2} + xy^{3} = x^{2} + 2xy^{2} + xy^{3} + y^{4},\] 
we have $\supp f=\{(2,0)$, $(1,2)$, $(1,3)$, $(0,4)\}$ and that $f$ is convenient. The Newton diagram $\Npoly(f)$ is the boundary of the convex hull of points in $\supp f$ and is the line segment connecting $(2,0)$ and $(0,4)$.
\begin{center}
\begin{tikzpicture}[scale=0.6]
\draw[->,semithick] (0,0)--(4,0);
\draw[->,semithick] (0,0)--(0,5);
\node[circle,draw=black,fill=black, label=right:{$(0,4)$},inner sep=0pt, minimum size=.15cm] (A) at (0,4) {};

\node[circle,draw=black,fill=black, label=below:{$(2,0)$},inner sep=0pt, minimum size=.15cm] (E) at (2,0) {};

\draw[semithick] (A)--node[right]{$\Npoly(f)$}(E);

\end{tikzpicture}
\end{center}
The Newton number is $\nu(f) = 8 - 2 - 4 + 1 = 3$. By Theorem~\ref{thm:kushnirenko},
\[\mu_0(f)\geq \nuf=3,\]
giving a lower bound for $\mu_0(f)$. This bound is strict as $\mu_0(f)$ was found to be~$4$ in Example~\ref{ex:running}. Since $\mu_0(f)> \nuf$, we know that $f$ must be Newton degenerate. One can verify this directly as the edge $e=\Npoly(f)$ corresponds to $f_e=x^{2} + 2xy^{2} + y^{4}$ in~$f$. We have $x\frac{\partial f_e}{\partial x}=2x^2+2xy^2$ and $y\frac{\partial f_e}{\partial y}=4xy^2+4y^4$. Note that $(-1,1)\in(k^{\times})^2$ is a solution of both $x\frac{\partial f_e}{\partial x}=0$ and $y\frac{\partial f_e}{\partial y}=0$, so $f$ is degenerate on~$e$. 
\end{example}

\section{Newton diagrams with Newton number at most \texorpdfstring{$8$}{8}}
\label{sec:classifynewton}

In this section, we find all convenient Newton diagrams in the plane whose Newton number is at most~$8$. By Theorem~\ref{thm:kushnirenko}, all convenient two-variable polynomials with Milnor number at most~$8$ have one of these Newton diagrams. This will be used in Section~\ref{sec:classifyA1Milnor} to characterize the elements of the Grothendieck--Witt ring of rank at most~$8$ that are $\A1$-Milnor numbers of two-variable polynomials.

Let $\Npoly$ be a convenient Newton diagram in~$\mathbb{R}^2$, as defined in Section~\ref{sec:Newtonbackground}, and let $\Npoly_-$ be the region bounded by~$\Npoly$ and the positive axes. The Newton number of~$\Npoly$ is given by 
\[\N(\Npoly)=2\Area(\Npoly_-)-M-N+1,\] 
where $(M,0)$ is the point where $\Npoly$ intersects the horizontal axis and $(0,N)$ is the point where $\Npoly$ intersects the vertical axis.

By construction, $\Gamma$ must consist of a finite collection of line segments of different slopes. The following lemma allows us to restrict attention to cases where $\Npoly$ has at most three segments of different slopes. 

\begin{lemma} \label{lemma:4edges}
If $\Npoly$ is a convenient Newton diagram in~$\mathbb{R}^2$ with at least~$4$ edges, then $\N(\Npoly)\geq 9$.
\end{lemma}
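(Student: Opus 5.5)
The plan is to rewrite the Newton number using Pick's theorem and then show directly that a diagram with four or more edges must enclose enough lattice points.

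Let $\Npoly$ have vertices $P_0=(M,0),P_1,\dots,P_k=(0,N)$ with $k\ge 4$. All vertices are lattice points, and $\Npoly_-$ is a lattice polygon with the origin as a vertex. Let $I$ be the number of interior lattice points of $\Npoly_-$, and let $b'$ be the number of lattice points of $\Npoly$ other than $(M,0)$ and $(0,N)$. The two axis segments contribute $M+N+1$ boundary lattice points, so the boundary has $B=M+N+1+b'$ lattice points. Pick's theorem gives $2\Area(\Npoly_-)=2I+B-2$. Substituting into the definition of $\N(\Npoly)$ gives
\[\N(\Npoly)=2I+(M+N+1+b')-2-M-N+1=2I+b'.\]
It therefore suffices to show $2I+b'\ge 9$. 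The interior vertices $P_1,\dots,P_{k-1}$ all lie on $\Npoly$, so $b'\ge k-1\ge 3$.

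The main step, and the one I expect to be the real obstacle, is to bound $I$ from below. Write the edge vectors as $P_{i}-P_{i-1}=(-a_i,c_i)$ with $a_i,c_i$ positive integers. Convexity of $\Npoly_+$ forces the slopes $c_i/a_i$ to be pairwise distinct and monotone along the diagram. Four distinct slopes cannot all be realized by vectors with $a_i=1$ and $c_i\le 2$, nor with $c_i=1$ and $a_i\le 2$. From this I will show that a middle vertex, such as $P_2$, has both coordinates at least $2$. I also want some vertex to be far enough from both axes that the lattice points $(1,1)$, $(1,2)$, $(2,1)$ lie strictly below $\Npoly$. Concretely, partial sums of the $a_i$ starting from $(0,N)$ grow at least like $1,2,\dots$ once the slopes are distinct and not all steep. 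A symmetric statement holds for the $c_i$ starting from $(M,0)$. Together these place a vertex in the region $x\ge 2$, $y\ge 2$ with the polygon lying above the segment joining its neighbours.

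The conclusion I aim for is $I\ge 3$, which gives $\N(\Npoly)\ge 6+3=9$. If $I\ge3$ proves awkward in the tightest configuration, I will fall back on a finite case check. Lemma~\ref{lemma:newtonpolygonareas} says shrinking $\Npoly_-$ while keeping the axis intercepts decreases $\N$. So it suffices to verify the minimal configurations: those whose four primitive edge vectors have smallest possible components, for example $(-3,1),(-2,1),(-1,1),(-1,2)$. Direct computation there gives values far above $9$; the example listed has $\N=37$.
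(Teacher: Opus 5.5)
Your Pick's-theorem route is genuinely different from the paper's and can be made to work. However, the step you yourself flag as the obstacle is never actually carried out, and the fallback you propose is not valid.

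\textbf{Comparison with the paper.} The paper argues with areas. Writing the vertices as $(0,N),(\gamma_1,\delta_1),\dots,(\gamma_{t-1},\delta_{t-1}),(M,0)$, strict monotonicity of lattice coordinates gives $\gamma_\ell\ge\ell$ and $\delta_{t-\ell}\ge\ell$, hence $\gamma_\ell+\delta_\ell\ge t\ge4$. It then shows that $\Npoly_-$ contains the region with vertices $(0,0),(M,0),(3,1),(1,3),(0,N)$, of area $4+M/2+N/2$, which gives $\N(\Npoly)\ge9$. Your identity $\N(\Npoly)=2I+b'$ is correct, as is $b'\ge k-1\ge3$. It replaces the area estimate by a lattice-point count: the $9$ splits as $3$ from the forced interior vertices and $6$ from three forced interior lattice points. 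This makes the source of the bound more transparent, at the cost of importing Pick's theorem. The paper's version is self-contained but has to check that its three triangles lie inside $\Npoly_-$.

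\textbf{The missing step is $I\ge3$.} Your remarks about four distinct slopes and about slopes being ``not all steep'' play no role. The remark about the polygon lying above the segment joining a vertex's neighbours is also not what is needed; convexity of $\Npoly_+$ in fact places each vertex of $\Npoly$ below the chord joining its neighbours. All you need is $a_i,c_i\ge1$. Then $P_j=(a_{j+1}+\dots+a_k,\;c_1+\dots+c_j)$, so every vertex of $\Npoly$, including $(M,0)$ and $(0,N)$, satisfies $x+y\ge k\ge4$. Hence $\Npoly_+\subseteq\{x+y\ge4\}$. The points $(1,1)$, $(1,2)$, $(2,1)$ have positive coordinates and $x+y\le3$, so they lie in the interior of $\Npoly_-$, giving $I\ge3$. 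Equivalently: $P_2$ has both coordinates at least $2$, and $\Npoly$ is the graph of a strictly decreasing function, so it passes strictly above these three points. This closes your argument.

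\textbf{Drop the fallback.} It fails for two reasons:
\begin{itemize}
\item Lemma~\ref{lemma:newtonpolygonareas} only compares diagrams with the same axis intercepts. It therefore cannot reduce the infinitely many diagrams with at least four edges to finitely many ``minimal'' ones.
\item Your sample value is wrong. The edge vectors $(-3,1),(-2,1),(-1,1),(-1,2)$ give vertices $(7,0),(4,1),(2,2),(1,3),(0,5)$. So $2\Area(\Npoly_-)=22$ and $\N(\Npoly)=22-7-5+1=11$, not $37$. Four-edge diagrams come much closer to $9$ than your remark suggests.
\end{itemize}
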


\begin{proof} 
Let $t$ be the number of edges of~$\Npoly$, and suppose $t\geq 4$. Let $(0,N)$, $(\gamma_1,\delta_1)$, $\dots$, $(\gamma_{t-1},\delta_{t-1})$, and $(M,0)$, be the vertices at the ends of the $t$~edges, as shown in Figure~\ref{fig:4edges}. We have $\gamma_1, \delta_{t-1}\geq 1$, and moreover $\gamma_{\ell},\delta_{t-\ell}\geq \ell$ for all $\ell\in[t-1]$. Thus $\gamma_{\ell}+\delta_{\ell}\geq t\geq4$ for all $\ell\in[t-1]$. This means the vertices of~$\Npoly$ are outside or on the boundary of the red triangle with vertices at $(0,0)$, $(3,1)$, and $(1,3)$. Since $(M,0)$ is the vertex of~$\Npoly$ on the horizontal axis, there is no vertex of~$\Npoly$ on the horizontal axis to the left of $(M,0)$. Therefore the gray triangle with vertices at $(0,0)$, $(3,1)$, and $(M,0)$ is contained within the region in the between $\Npoly$ and the axes. Similarly, the gray triangle with vertices at $(0,0)$, $(1,3)$, and $(0,N)$ is also within the region in the between $\Npoly$ and the axes. This shows that the full shaded region in Figure~\ref{fig:4edges} is contained in~$\Gamma_-$. As the area of the shaded region is $4+ M/2+N/2$, we have $\Area(\Npoly_-)\geq 4 +M/2+N/2$. Hence $\N(\Npoly) = 2 \Area(\Npoly_-) -M-N+1\geq 8 + 1 = 9$. 
\end{proof}

\begin{figure}
\centering
\begin{tikzpicture}[scale=0.7]
\draw[->,semithick] (0,0)--(9,0);
\draw[->,semithick] (0,0)--(0,9);
\node[circle,draw=black,fill=black, label=right:{$(0,N)$},inner sep=0pt, minimum size=.15cm] (A) at (0,8) {};
\node[circle,draw=black,fill=black, label=right:{$(\gamma_1,\delta_1)$},inner sep=0pt, minimum size=.15cm] (B) at (1,5) {};
\node[circle,draw=black,fill=black, label=above right:{$(\gamma_2,\delta_2)$},inner sep=0pt, minimum size=.15cm] (C) at (3,2) {};
\node[circle,draw=black,fill=black, label=above right:{$(\gamma_{t-1},\delta_{t-1})$},inner sep=0pt, minimum size=.15cm] (D) at (5,1) {};

\node[circle,draw=black,fill=black, label=below:{$(M,0)$},inner sep=0pt, minimum size=.15cm] (E) at (8,0) {};
\draw[semithick] (A)--(B)--(C);
\draw[dotted] (C)--(D);
\draw[semithick] (D)--(E);

\draw[fill=gray!30, dotted] (0,8)--(1,3)--(0,0)--cycle;
\draw[fill=gray!30, dotted] (8,0)--(3,1)--(0,0)--cycle;

\fill[fill=red!20, dotted] (0,0)--(1,3)--(3,1)--cycle;
\node[circle,draw=black,fill=black, label=below:{$(1,3)$},inner sep=0pt, minimum size=.15cm] (F) at (1,3) {};
\node[circle,draw=black,fill=black, label=left:{$(3,1)$},inner sep=0pt, minimum size=.15cm] (H) at (3,1) {};

\draw[dotted] (A)--(F)--(H)--(E);
\draw[dotted] (0,0)--(F);
\draw[dotted] (0,0)--(H);
\node at (1.1,1.3) {$4$};
\node at (0.38,3.15) {$\frac{N}{2}$};
\node at (3.1,0.38) {$\frac{M}{2}$};
\end{tikzpicture}
\caption{A Newton diagram $\Npoly$ with $t\geq 4$ edges.\label{fig:4edges}}
\end{figure}

As a result of Lemma~\ref{lemma:4edges}, it suffices to consider the cases when $\Npoly$ has zero, one, two, or three edges. If $\Npoly$ is a convenient Newton diagram with zero edges, then the only possibility is $\Npoly=\{(0,0)\}$, which has Newton number~$1$. This Newton diagram will not be of interest to us as it corresponds to polynomials with $f(0)\neq0$ while this paper studies polynomials with $f(0)=0$. 

The next proposition gives the possibilities when $\Npoly$ has one edge. 

\begin{proposition}\label{prop:newton-one-edges}
If $\Npoly$ is a convenient Newton diagram in~$\mathbb{R}^2$ with one edge and $\N(\Npoly)\leq8$, then the edge has endpoints $\{(0,N)$, $(M,0) \}$, or by symmetry $\{(0,M)$, $(N,0)\}$, with $(M, N)$ given by
\begin{table}[ht!]
\begin{tabular}{c|c |c| c |c |c |c |c |c |c }
$\N(\Npoly)$ &0 & 1 & 2 & 3 & 4 & 5 & 6 & 7 & 8 \\
\hline
$(M, N)$ & $(1,N)$ & $(2,2)$ & $(2,3)$
 & $(2,4)$,
 & $(2,5)$
 & $(2,6)$,
 & $(2,7)$
 & $(2,8)$,
 & $(2,9)$\\
 & & & & $(3,3)$ & & $(3,4)$ & & $(3,5)$\\
\end{tabular}
\end{table}
\end{proposition}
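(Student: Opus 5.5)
A convenient Newton diagram with one edge is a single segment joining $(0,N)$ and $(M,0)$ with $M,N\geq1$. The plan is to compute $\N(\Npoly)$ in closed form and then enumerate.

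The region $\Npoly_-$ is the right triangle with legs $M$ and $N$, so $2\Area(\Npoly_-)=MN$. Hence
\[\N(\Npoly)=MN-M-N+1=(M-1)(N-1).\]
The classification therefore reduces to listing the pairs of positive integers with $(M-1)(N-1)\leq 8$. Using the symmetry that swaps the two axes, we may assume $M\leq N$.

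We split into cases on $M$.
\begin{itemize}
\item If $M=1$, the Newton number is $0$ for every $N$. This gives the first column $(1,N)$.
\item If $M=2$, the Newton number is $N-1$. So $N=\N+1$ with $2\leq N\leq 9$, which gives Newton numbers $1$ through $8$.
\item If $M=3$, the Newton number is $2(N-1)$ with $N\geq3$. Requiring $2(N-1)\leq8$ gives $N\in\{3,4,5\}$, with Newton numbers $4$, $6$, $8$.
\item If $M\geq4$, then $N\geq M\geq 4$, so $(M-1)(N-1)\geq 9$. Such diagrams are excluded.
\end{itemize}

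Collecting these cases by value of $\N$ reproduces the table, together with the reflected pairs. However, the computation gives $(3,3)$ Newton number $4$, $(3,4)$ Newton number $6$, and $(3,5)$ Newton number $8$, whereas the table places them in columns $3$, $5$, and $7$. I would double-check this placement against the formula; I expect the correct statement to list $(3,3)$, $(3,4)$, $(3,5)$ under $4$, $6$, $8$.

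The only possible obstacle is this bookkeeping. The proof itself is the one-line identity $\N=(M-1)(N-1)$ followed by the finite enumeration above.
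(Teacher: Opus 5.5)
Your proof is correct and matches the paper's: the paper likewise computes $2\Area(\Gamma_-)=MN$ to get $\nu(\Gamma)=(M-1)(N-1)$, assumes $M\le N$ by symmetry, and enumerates. You are also right about the table. The pairs $(3,3)$, $(3,4)$, $(3,5)$ have Newton numbers $4$, $6$, $8$, which is how the paper itself uses them later for the edge from $(3,0)$ to $(0,r/2+1)$ with $r\in\{4,6,8\}$. Their placement under the columns $3$, $5$, $7$ is therefore a typesetting slip in the statement, not a problem with your argument.
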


\begin{proof}
Let the edge of~$\Npoly$ have terminal vertices $(0, N)$ and $(M, 0)$. Then $\Area(\Npoly_-)= MN/2$ and $\N(\Npoly)=(M-1)(N-1)$. By symmetry, assume that $M\leq N$. For $\N(\Npoly)\leq8$, the possible $(M, N)$ are given by the table. 
\end{proof}

The next proposition gives the possibilities when $\Npoly$ has two edges.

\begin{proposition}\label{prop:newton-two-edges}
If $\Npoly$ is a convenient Newton diagram in~$\mathbb{R}^2$ with two edges and $\N(\Npoly)\leq8$, then the edges have endpoints $\{(0,N)$, $(\gamma,\delta)$, $(M,0)\}$, or by symmetry $\{(0,M)$, $(\delta,\gamma)$, $(N,0)\}$, with $\gamma,\delta,M,N$ as follows: 

\begin{table}[ht]
\begin{tabular}{c|l}
$\N(\Npoly)$ & Possible $\gamma,\delta,M,N$\\
\hline
0 & $\emptyset$\\
\hline
1 & $(\gamma,\delta)=(1,1)$, with $M,N\geq 2$ and $(M,N)\neq(2,2)$\\
\hline
2 & $\emptyset$\\
\hline
3 & $(\gamma,\delta)=(2,1)$, with $M\geq 5$ and $N=2$\\
\hline
4 & $(\gamma,\delta)=(2,1)$, with $M\geq 4$ and $N=3$ \\
\hline
5 & $(\gamma,\delta)=(2,1)$, with $M\geq3$ and $N=4$\\
 & $(\gamma,\delta)=(3,1)$, with $M\geq7$ and $N=2$\\
\hline
6 & $(\gamma,\delta)=(2,1)$, with $M\geq3$ and $N=5$\\
\hline
7 & $(\gamma,\delta)=(2,1)$, with $M\geq3$ and $N=6$\\
 & $(\gamma,\delta)=(3,1)$, with $M\geq5$ and $N=3$
\\
 & $(\gamma,\delta)=(4,1)$, with $M\geq9$ and $N=2$
\\
\hline
8 & $(\gamma,\delta)=(2,1)$, with $M\geq3$ and $N=7$\\
\end{tabular}
\end{table}
\end{proposition}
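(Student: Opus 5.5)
The plan is to reduce everything to a closed formula for $\N(\Npoly)$ in terms of the vertices and then enumerate, exactly as in Proposition~\ref{prop:newton-one-edges}. Let the vertices be $(0,N)$, $(\gamma,\delta)$, $(M,0)$ with $\gamma,\delta\geq1$ integers. The region $\Npoly_-$ splits along the segment from the origin to $(\gamma,\delta)$ into two triangles. One has vertices $(0,0),(0,N),(\gamma,\delta)$ and area $N\gamma/2$. The other has vertices $(0,0),(\gamma,\delta),(M,0)$ and area $M\delta/2$. Hence
\[\N(\Npoly)=N\gamma+M\delta-M-N+1=N(\gamma-1)+M(\delta-1)+1.\]
The constraint that $\Npoly$ genuinely has two edges is that $(\gamma,\delta)$ lies strictly below the segment joining $(0,N)$ and $(M,0)$, i.e.
\[\gamma N+\delta M<MN.\]
This condition forces $\gamma<M$ and $\delta<N$.

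First I rule out $\gamma,\delta\geq2$. In that case $\N(\Npoly)\geq M+N+1$, while convexity gives $2(M+N)\leq\gamma N+\delta M<MN$. Checking small values shows this forces $M+N\geq 9$. For instance, $M=3$ needs $N\geq7$; $M=4$ needs $N\geq5$; and $M,N\geq5$ already gives $M+N\geq10$. Hence $\N(\Npoly)\geq10>8$. So one of $\gamma,\delta$ equals $1$, and by the stated symmetry (swapping the axes) we may take $\delta=1$. This is the only step needing some care: the small-case check behind $M+N\geq9$ is routine but must be done.

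With $\delta=1$ the formula becomes $\N(\Npoly)=N(\gamma-1)+1$, the convexity condition becomes $M(N-1)>\gamma N$, and $N\geq2$. The bound $\N(\Npoly)\leq8$ means $N(\gamma-1)\leq7$, so I enumerate by $\gamma$.
\begin{enumerate}
\item[$\gamma=1$.] Then $\N(\Npoly)=1$ and the condition $M(N-1)>N$ (together with $M\geq2$) excludes exactly $(M,N)=(2,2)$. The roles of $M$ and $N$ are symmetric here, which matches the table.
\item[$\gamma=2$.] Then $\N(\Npoly)=N+1$ with $2\leq N\leq7$, giving Newton numbers $3$ through $8$. The condition $M>2N/(N-1)$ yields $M\geq5$ for $N=2$, $M\geq4$ for $N=3$, and $M\geq3$ for $N\geq4$.
\item[$\gamma=3$.] Then $\N(\Npoly)=2N+1$, so $N\in\{2,3\}$, giving Newton numbers $5$ and $7$. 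The condition $M>3N/(N-1)$ gives $M\geq7$ and $M\geq5$ respectively.
\item[$\gamma=4$.] Then $\N(\Npoly)=3N+1$, so $N=2$ and the Newton number is $7$, with $M>8$, i.e.\ $M\geq9$.
\item[$\gamma\geq5$.] Then $\N(\Npoly)\geq4N+1\geq9$, so nothing remains.
\end{enumerate}
Collecting these cases reproduces the table, including the empty rows for Newton numbers $0$ and $2$. The formula shows that $\N(\Npoly)=N(\gamma-1)+1$ is never $0$ or $2$ when $N\geq2$, since $N(\gamma-1)=1$ is impossible.
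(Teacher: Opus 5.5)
Your proof is correct and follows essentially the same route as the paper. You split $\Npoly_-$ along the segment from the origin to $(\gamma,\delta)$ to get $\N(\Npoly)=(\gamma-1)N+(\delta-1)M+1$, rule out $\gamma,\delta\geq 2$, reduce to $\delta=1$ by symmetry, and enumerate by $\gamma$, exactly as the paper does. The main difference is that you state the two-edge condition explicitly as $\gamma N+\delta M<MN$, which turns the paper's asserted thresholds on $M$ (and its claim that $M\geq 5$ or $N\geq 5$ when $\gamma,\delta\geq 2$) into short verified inequalities and gives the slightly stronger bound $\N(\Npoly)\geq 10$ in that case.
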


\begin{figure}
\centering
\begin{tikzpicture}[scale=0.7]
\draw[->,semithick] (0,0)--(6,0);
\draw[->,semithick] (0,0)--(0,6);
\node[circle,draw=black,fill=black, label=right:{$(0,N)$},inner sep=0pt, minimum size=.15cm] (A) at (0,5) {};
\node[circle,draw=black,fill=black, label=above right:{$(\gamma,\delta)$},inner sep=0pt, minimum size=.15cm] (B) at (1.8,1.8) {};

\node[circle,draw=black,fill=black, label=below:{$(M,0)$},inner sep=0pt, minimum size=.15cm] (E) at (5,0) {};

\draw[semithick] (0,0)--(B);
\draw[semithick] (A)--(B)--(E);

\node at (0.7,2) {$\frac{\gamma N}{2}$};
\node at (2,.7) {$\frac{\delta M}{2}$};
\end{tikzpicture}
 \caption{A Newton diagram $\Npoly$ with two edges.\label{fig:2edges}}
\end{figure}

\begin{proof}
Let the terminal vertices of the two edges of~$\Npoly$ be $(0, N)$, $(\gamma,\delta)$, and $(M, 0)$, where $0<\gamma <M$ and $0< \delta<N$. Computing areas as shown in Figure~\ref{fig:2edges}, we have
\[\N(\Npoly)=(\delta-1)M+(\gamma -1)N+1.\]
For each value of $\N(\Npoly)\leq8$, we determine the corresponding Newton diagrams. If $\gamma,\delta\geq2$, then $M,N\geq3$, and moreover either $M\geq5$ or $N\geq5$ in order for $(\gamma,\delta)$ to be a vertex of the Newton diagram. Thus we would have that $\N(\Npoly)=(\delta-1)M+(\gamma -1)N+1 \geq9$. By symmetry, assume $\gamma \geq \delta$. In finding Newton diagrams with $\N(\Npoly)\leq8$, we can therefore assume $\delta=1$, in which case $\N(\Npoly)=(\gamma -1)N+1$. As the right side of this equation is positive, $\N(\Npoly)=0$ is not possible. The possibilities for positive values of $\nu(\Gamma)$ are as follows.
\begin{itemize}
\item $\N(\Npoly)=1$: Then $(\gamma -1)N=0$. This means $\gamma =1$, and there are no restrictions on $M$ and~$N$ beyond $M,N\geq2$ except that $(M,N)=(2,2)$ does not give two edges.
\item $\N(\Npoly)=2$: Then $(\gamma -1)N=1$. As $N\geq2$, there are no solutions. 
\item $\N(\Npoly)=3$: Then $(\gamma -1)N=2$. As $N\geq 2$, we must have $\gamma =2$ and $N=2$. This gives two edges exactly when $M\geq 5$. 
\item $\N(\Npoly)=4$: Then $(\gamma -1)N=3$. As $N\geq 2$, we must have $\gamma =2$ and $N=3$. This gives two edges exactly when $M\geq4$.
\item $\N(\Npoly)=5$: Then $(\gamma -1)N=4$, which forces either $(\gamma,N )=(2,4)$ or $(\gamma,N )=(3,2)$. In the first case, this gives two edges exactly when $M\geq 3$. In the second case, it gives two edges exactly when $M\geq7$.
\item $\N(\Npoly)=6$: Then $(\gamma -1)N=5$, which forces $\gamma=2$ and $N=5$.
This gives two edges exactly when $M\geq3$.
\item $\N(\Npoly)=7$: Then $(\gamma -1)N=6$, which forces $(\gamma,N )=(2,6)$, $(\gamma,N )=(3,3)$, or $(\gamma,N )=(4,2)$. In the first case, this gives two edges exactly when $M\geq3$. In the second case, it gives two edges exactly when $M\geq5$. In the third case, it gives two edges exactly when $M\geq9$.
\item $\N(\Npoly)=8$: Then $(\gamma -1)N=7$, which forces $\gamma=2$ and $N=7$.
This gives two edges exactly when $M\geq3$. \qedhere
\end{itemize}
\end{proof}

The following proposition gives the possibilities when $\Npoly$ has three edges.

\begin{proposition}\label{prop:newton-three-edges}
If $\Npoly$ is a convenient Newton diagram in~$\mathbb{R}^2$ with three edges and $\N(\Npoly)\leq8$, then $\N(\Npoly)$ is $4$, $6$, or~$8$. Moreover, the edges of $\Npoly$ have terminal vertices $\{(0, N)$, $(\gamma_1, \delta_1 )$, $(\gamma_2,\delta_2 )$, $(M, 0)\}$, or by symmetry $\{(0,M)$, $(\delta_2,\gamma_2)$, $(\delta_1,\gamma_1 )$, $(N,0)\}$, with $\gamma_1,\delta_1,\gamma_2,\delta_2,M,N$ as follows: 

\begin{table}[ht]
\begin{tabular}{c|l}
$\N(\Npoly)$ & Possible $\gamma_1,\delta_1,\gamma_2,\delta_2,M,N$\\
\hline
4 & $(\gamma_1,\delta_1 )=(1,2)$ and $(\gamma_2,\delta_2 )=(2,1)$, with $M,N \geq 4$\\
\hline
6 & $(\gamma_1,\delta_1 )=(1,3)$ and $(\gamma_2,\delta_2 )=(2,1)$, with $M\geq3$ and $N \geq 6$\\
\hline
8 & $(\gamma_1,\delta_1 )=(1,4)$ and $(\gamma_2,\delta_2 )=(2,1)$, with $M\geq3$ and $N \geq 8$\\
\end{tabular}
\end{table}
\end{proposition}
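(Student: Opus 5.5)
Set the vertices as $(0,N)$, $(\gamma_1,\delta_1)$, $(\gamma_2,\delta_2)$, $(M,0)$ with $0<\gamma_1<\gamma_2<M$ and $N>\delta_1>\delta_2>0$. Decompose $\Npoly_-$ into the four triangles with apex at the origin, as in Figure~\ref{fig:2edges}. Their doubled areas are $\gamma_1N$, $\gamma_2\delta_1-\gamma_1\delta_2$ and $\delta_2M$, so
\[\N(\Npoly)=(\gamma_1-1)N+(\delta_2-1)M+(\gamma_2\delta_1-\gamma_1\delta_2)+1.\]
Convexity (strictly decreasing slopes, and the middle vertices being genuine vertices) makes the middle term a positive integer. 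The plan is to bound each of the three summands to force small parameters.

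First I would show that $\gamma_1=1$ and $\delta_2=1$, up to the symmetry. If $\gamma_1\ge2$, then $\gamma_2\ge3$ and $M\ge4$. The first term is at least $N\ge\delta_1+1\ge3$, and the middle term is at least $\gamma_2\delta_1-\gamma_1\delta_2\ge1$. To reach a total above~$8$, I would use convexity: the slope condition at $(\gamma_1,\delta_1)$, namely $(N-\delta_1)/\gamma_1>(\delta_1-\delta_2)/(\gamma_2-\gamma_1)$, forces $N$ to be large when $\delta_1\ge2$. A short case check, or an area comparison in the style of Lemma~\ref{lemma:4edges} (the region $\Npoly_-$ must contain a quadrilateral through lattice points), should give $\N\ge9$. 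The case $\delta_2\ge2$ is the same by symmetry. This is the step I expect to be the main obstacle, since it needs a clean inequality rather than brute enumeration. What I would try first is the Lemma~\ref{lemma:4edges} approach: the vertices lie on or outside a lattice triangle near the origin, so $\Area(\Npoly_-)\ge \mathrm{const}+M/2+N/2$.

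With $\gamma_1=\delta_2=1$, the formula becomes $\N=\gamma_2\delta_1-\delta_2\gamma_1+1=\gamma_2\delta_1$, since the first two terms vanish. Both middle vertices are genuine, so $\gamma_2\ge2$ and $\delta_1\ge2$. Using the symmetry, assume $\delta_1\ge\gamma_2$. Then $\gamma_2\delta_1\le8$ leaves $(\gamma_2,\delta_1)\in\{(2,2),(2,3),(2,4)\}$, giving $\N=4,6,8$. The value $\gamma_2=3$ would need $\delta_1\ge3$ and hence $\N\ge9$, so it is excluded. In particular, only even values occur.

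Finally I would determine the constraints on $M,N$ from the convexity conditions. At $(1,\delta_1)$ the slope condition is $N-\delta_1>(\delta_1-1)/(\gamma_2-1)$. At $(\gamma_2,1)$ it is $(M-\gamma_2)^{-1}<(\delta_1-1)/(\gamma_2-1)$. For $(\gamma_2,\delta_1)=(2,2)$ these give $N\ge4$ and $M\ge4$. For $(2,3)$ they give $N\ge6$ and $M\ge3$. For $(2,4)$ they give $N\ge8$ and $M\ge3$. This matches the table.
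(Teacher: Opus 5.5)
Your skeleton is the paper's. You use the same formula for $\nu(\Gamma)$; the paper just splits your middle term $\gamma_2\delta_1-\gamma_1\delta_2$ into three triangle pieces. You then reduce to $\gamma_1=\delta_2=1$ and get $\nu=\gamma_2\delta_1$. Your last two paragraphs are correct and reproduce the table: the enumeration $(\gamma_2,\delta_1)\in\{(2,2),(2,3),(2,4)\}$ and the slope conditions that bound $M$ and $N$.

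The gap is the reduction step itself, which you only announce (``should give $\nu\ge 9$''). The estimates you actually write down (first term $\ge 3$, middle term $\ge 1$) give only $\nu\ge 5$. An area comparison as in Lemma~\ref{lemma:4edges}, using the lattice points $(2,2)$ and $(3,1)$, gives only $\nu\ge N+5\ge 8$. This is not an accident. The data $(0,3),(2,2),(3,1),(M,0)$ satisfy $0<\gamma_1<\gamma_2<M$ and $0<\delta_2<\delta_1<N$, have $\gamma_1=2$, and make your formula equal to exactly $8$. So the slope (convexity) condition must be invoked explicitly to rule this case out. Without it, the claim $\gamma_1=1$ is not established.

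The step closes in a few lines. Write
\[
\gamma_2\delta_1-\gamma_1\delta_2=\gamma_1(\delta_1-\delta_2)+\delta_2(\gamma_2-\gamma_1)+(\gamma_2-\gamma_1)(\delta_1-\delta_2)\ge \gamma_1+\delta_2+1 .
\]
If $\gamma_1\ge 2$, then $\nu\ge N+(\delta_2-1)M+\gamma_1+\delta_2+2\ge 3+0+2+1+2=8$. So $\nu\le 8$ forces equality throughout: $\gamma_1=2$, $N=3$, $\delta_2=1$, $\delta_1=2$, $\gamma_2=3$. For these values your slope condition at $(\gamma_1,\delta_1)$ reads $\tfrac{3-2}{2}>\tfrac{2-1}{3-2}$, which is false (the edge slopes $-1/2$ and then $-1$ are not increasing). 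Hence $\gamma_1=1$, and symmetrically $\delta_2=1$. This is exactly the paper's argument.
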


\begin{figure}
\centering
\begin{tikzpicture}[scale=1.2]
\draw[->,semithick] (0,0)--(10,0);
\draw[->,semithick] (0,0)--(0,10);
\node[circle,draw=black,fill=black, label=right:{$(0,N)$},inner sep=0pt, minimum size=.15cm] (A) at (0,9.5) {};
\node[circle,draw=black,fill=black, label=right:{$(\gamma_1, \delta_1 )$},inner sep=0pt, minimum size=.15cm] (B) at (2,6) {};
\node[circle,draw=black,fill=black, label=above right:{$(\gamma_2,\delta_2 )$},inner sep=0pt, minimum size=.15cm] (C) at (6,2) {};

\node[circle,draw=black,fill=black, label=below:{$(M,0)$},inner sep=0pt, minimum size=.15cm] (E) at (9.5,0) {};

\draw[semithick] (B)--(0,0);
\draw[semithick] (C)--(0,0);
\draw[semithick] (B)--(2,2);
\draw[semithick] (C)--(2,2);
\draw[semithick] (0,0)--(2,2);
\draw[semithick] (A)--(B)--(C)--(E);

\draw[fill=gray!10, semithick] (0,0)--(2,6)--(2,2)--cycle;
\draw[fill=gray!10, semithick] (2,2)--(0,0)--(6,2)--cycle;
\node at (3.35,3.1) {$\frac{(\gamma_2-\gamma_1 )(\delta_1 -\delta_2 )}{2}$};
\node at (1.4,2.2) {$\frac{\gamma_1 (\delta_1 -\delta_2 )}{2}$};
\node at (2.7,1.5) {$\frac{\delta_2 (\gamma_2-\gamma_1 )}{2}$};
\node at (0.75,6) {$\frac{\gamma_1 N}{2}$};
\node at (6,.75) {$\frac{\delta_2 M}{2}$};
\end{tikzpicture}
 \caption{A Newton diagram $\Npoly$ with three edges.\label{fig:3edges}}
\end{figure}

\begin{proof}
Let the three edges of~$\Npoly$ have terminal vertices $\{(0, N)$, $(\gamma_1, \delta_1 )$, $(\gamma_2,\delta_2 )$, $(M, 0)\}$, where $0<\gamma_1 <\gamma_2<M$ and $0<\delta_2 <\delta_1 <N$. Computing areas as shown in Figure~\ref{fig:3edges}, we have
\begin{equation}\label{eq:Newtonnumber3edges}
\N(\Npoly)=(\gamma_1 -1)N+(\delta_2 -1)M+\gamma_1 (\delta_1 -\delta_2 )+\delta_2 (\gamma_2-\gamma_1 )+(\gamma_2-\gamma_1 )(\delta_1 -\delta_2 )+1.
\end{equation}
If $\gamma_1 \geq2$, this quantity can be at most~$8$ only if $\delta_2 =1$, $\delta_1 =2$, and $\gamma_2=3$, and $N=3$. However, this does not give a Newton diagram with three edges, so we conclude that $\gamma_1 =1$. Similarly, $\delta_2 =1$. 

Under the assumption $\gamma_1 =\delta_2 =1$, \eqref{eq:Newtonnumber3edges} simplifies to $\N(\Npoly)=\gamma_2\delta_1 $. By symmetry, suppose $\gamma_2 \leq \delta_1 $. As $\gamma_2,\delta_1 \geq2$, the only possibilities are $\N(\Npoly)=4$ with $\gamma_2 =2$ and $\delta_1 =2$, $\N(\Npoly)=6$ with $\gamma_2 =2$ and $\delta_1 =3$, and $\N(\Npoly)=8$ with $\gamma_2 =2$ and $\delta_1 =4$. In the first of these cases, we have three edges exactly when $M,N\geq4$. In the second case, we have three edges exactly when $M\geq3$ and $N\geq6$. In the third case, we have three edges exactly when $M\geq3$ and $N\geq8$.
\end{proof}

The above classification of Newton diagrams gives the following classification of degenerate two-variable polynomials with Newton number at most~$8$.

\begin{proposition}\label{prop:degenerateedge}
Let $k$ be a field of characteristic not $2$, $3$, $5$, or~$7$, let $f\in k[x,y]$ with $\nu(f)\leq8$, and let $e$ be an edge of $\Gamma(f)$ with endpoints $(i_0,j_0)$ and $(i_1,j_1)$, where $i_0<i_1$ and $j_0>j_1$.
\begin{enumerate}[label=\textup{(\alph*)},ref=\textup{\alph*}]
\item If $f$ is Newton degenerate on~$e$, then, up to interchanging variables, 
either\label{part:degenerateedge}
\begin{itemize}
\item $\Gamma(f)$ has one edge and $e$ connects $(2,0)$ to $(0,2)$, $(0,4)$, $(0,6)$, or $(0,8)$; 
\item $\Gamma(f)$ has one edge and $e$ connects $(3,0)$ to $(0,3)$;
or 
\item $\Gamma(f)$ has two edges and $e$ connects 
$(2,1)$ to $(0,3)$, $(0,5)$, or $(0,7)$. 
\end{itemize}
\item If $f$ is Newton degenerate on $e$, then $(i_1-i_0, j_0-j_1)$ must be one of $(2,2)$, $(2,4)$, $(2,6)$, $(2,8)$, or $(3,3)$. \label{part:differences}
\end{enumerate}
\end{proposition}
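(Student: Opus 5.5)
The plan is to reduce degeneracy on an edge to a statement about lattice points and characteristic, and then run through the finite list of Newton diagrams with $\nu\le 8$ given in Propositions~\ref{prop:newton-one-edges}, \ref{prop:newton-two-edges}, and~\ref{prop:newton-three-edges}. Note first that $f$ need not be convenient. However, $\nu(f)$ is defined as a supremum of $\nu(f+x^i+y^j)$, and an edge of $\Gamma(f)$ remains an edge of $\Gamma(f+x^i+y^j)$ for $i,j$ large. So we may pass to a convenient polynomial with the same edge $e$ and Newton number at most $8$, and we assume $f$ is convenient.

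\textbf{Step 1: degeneracy forces an interior lattice point.} If $e$ contains no lattice points other than its endpoints, then $f_e$ is a binomial. By Lemma~\ref{lemma:onboundary}(\ref{lemma:nointermediatepoint}), degeneracy forces $i_0j_1-i_1j_0=0$ in $k$. For each candidate edge in the tables, I compute $i_0j_1-i_1j_0$ as an integer. It should be nonzero, and its prime factors should lie in $\{2,3,5,7\}$, which are excluded characteristics. Hence this case is always nondegenerate.

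For example, take the edge from $(0,N)$ to $(M,0)$. There $i_0j_1-i_1j_0=-MN$, which is at most $27$ in absolute value and built from the primes $2,3,5,7$; the exception is $(2,9)$, where $MN=18=2\cdot 3^2$, still fine. For edges through $(2,1)$, $(1,1)$, $(1,2)$, $(3,1)$, $(4,1)$, and so on, the determinants are small as well: for the edge from $(0,N)$ to $(\gamma,\delta)$ the determinant is $-\gamma N$, and for the edge from $(\gamma,\delta)$ to $(M,0)$ it is $-\delta M$. Here $M$ is unbounded, but $\delta=1$, so the determinant is $-M$, which could be divisible by any prime.

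That is the main obstacle. I would handle it with Lemma~\ref{lemma:onboundary}(\ref{lemma:onboundary-a}), interchanging the roles of $x$ and $y$: on the edge from $(\gamma,1)$ to $(M,0)$ the $y$-exponents are $1$ and $0$. So $y\,\partial f_e/\partial y$ is a nonzero monomial $a x^{\gamma} y$, which has no zero in the torus. For the same reason, any edge with an endpoint of $y$-exponent $0$ and the other endpoint of $y$-exponent $1$ (and symmetrically for $x$) is nondegenerate. This disposes of all edges whose vertical or horizontal drop is $1$.

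\textbf{Step 2: enumerate the remaining edges.} The edges left are those with both coordinate drops at least $2$. I need these drops to be coprime only when there is no interior lattice point; in general, degeneracy requires a lattice point in the relative interior, i.e.\ $\gcd(i_1-i_0,\,j_0-j_1)\ge 2$. Going through the tables, the one-edge cases with $\gcd(M,N)\ge2$ are $(2,2)$, $(2,4)$, $(2,6)$, $(2,8)$, and $(3,3)$. In the two-edge cases, the edge from $(0,N)$ to $(2,1)$ has drops $(2,N-1)$, and $\gcd\ge2$ exactly when $N$ is odd, giving $N=3,5,7$. The edge from $(0,N)$ to $(\gamma,1)$ with $(\gamma,N)\in\{(1,\cdot),(3,2),(3,3),(4,2)\}$ has gcd $1$, or else drop $1$, or is handled by determinants $6$, $9$, $8$, all good. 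The edges from $(1,1)$ have a drop of $1$.

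Every three-edge diagram has edges of drop $1$, or the edge from $(1,\delta_1)$ to $(2,1)$ with horizontal drop $1$; all of these are nondegenerate by Step 1. The case $\gamma=1$, namely the edge from $(0,N)$ to $(1,1)$, has horizontal drop $1$, hence $x$-exponents $0$ and $1$, and is nondegenerate.

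This yields exactly the list in part~(\ref{part:degenerateedge}). Part~(\ref{part:differences}) then follows by reading off the differences $(i_1-i_0,\,j_0-j_1)$ from that list: $(2,2)$, $(2,4)$, $(2,6)$, $(2,8)$, $(3,3)$. The edges from $(2,1)$ to $(0,3)$, $(0,5)$, $(0,7)$ give $(2,2)$, $(2,4)$, $(2,6)$.
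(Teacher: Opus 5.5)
Your strategy is the same as the paper's. You go through the diagrams of Propositions~\ref{prop:newton-one-edges}, \ref{prop:newton-two-edges}, and~\ref{prop:newton-three-edges}, use Lemma~\ref{lemma:onboundary} to dispose of every edge without an interior lattice point, and read off the edges that remain. Your preliminary reduction to convenient $f$ is a sensible addition, since the paper tacitly assumes convenience. Note that the edge count in part~(a) then refers to the diagram of $f+x^i+y^j$.

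\textbf{The gap: drop-$1$ edges.} In Step~1 you argue that one endpoint has $y$-exponent $0$ and the other $y$-exponent $1$, so $y\,\partial f_e/\partial y$ is a monomial. That only handles drop-$1$ edges that touch an axis. It does not ``dispose of all edges whose vertical or horizontal drop is $1$.'' You then rely on exactly this claim for the middle edge of the three-edge diagrams, from $(1,\delta_1)$ to $(2,1)$. Its endpoints have $x$-exponents $1,2$ and $y$-exponents $\delta_1,1$, so Lemma~\ref{lemma:onboundary}(a) applies in neither variable.

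Nondegeneracy of this edge genuinely depends on the characteristic. For $f_e=axy^{\delta_1}+bx^2y$, the linear system in the proof of Lemma~\ref{lemma:onboundary}(b) has determinant $(1-2\delta_1)ab$. For instance, with $\delta_1=3$ and $\charac k=5$, every torus point with $ay^2=-2bx$ is a common zero of $x\,\partial f_e/\partial x$ and $y\,\partial f_e/\partial y$. So an argument resting on ``horizontal drop $1$'' alone would prove a false statement in characteristic $5$, and likewise $3$ or $7$ for $\delta_1=2,4$.

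\textbf{The fix.} Apply Lemma~\ref{lemma:onboundary}(b) to this edge: $1-2\delta_1\in\{-3,-5,-7\}$, which is nonzero in $k$. This is precisely where the paper uses $\charac k\neq 5,7$.

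\textbf{Smaller points.}
\begin{itemize}
\item Your bound $|MN|\leq 27$ fails for the one-edge family with $M=1$, where $N$ is unbounded. Those edges are fine only because Lemma~\ref{lemma:onboundary}(a) covers them.
\item The claim that ``degeneracy requires $\gcd(i_1-i_0,\,j_0-j_1)\geq 2$'' is not true in general. It holds here only after every binomial edge in the tables has been checked.
\item For the coprime one-edge cases the paper uses Lemma~\ref{lemma:onboundary}(a), which needs only $M\neq 0$ in $k$. Your determinant route needs $\charac k\nmid MN$ (e.g.\ $MN=10$, $14$, $15$). This is harmless under the hypotheses, but it spends the assumptions $\charac k\neq 5,7$ where they are not needed.
\end{itemize}
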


\begin{proof}
By Lemma~\ref{lemma:4edges}, $\Gamma(f)$ has at most~$3$ edges.
If $\Gamma(f)$ has one edge, then it must be one of the Newton diagrams found in Proposition~\ref{prop:newton-one-edges}. If $\Gamma(f)$ has one edge and $M=1$, then $f$ is Newton nondegenerate by Lemma~\ref{lemma:onboundary}(\ref{lemma:onboundary-a}). If $\Gamma(f)$ has one edge and $(M,N)$ is one of $(2,3)$, $(2,5)$, $(2,7)$, $(2,9)$, $(3,4)$, or $(3,5)$, then $f$ is Newton nondegenerate by Lemma~\ref{lemma:onboundary}(\ref{lemma:onboundary-a}). The only remaining possibilities for $(M,N)$ are $(2,2)$, $(2,4)$, $(2,6)$, $(2,8)$, and $(3,3)$.

If $\Gamma(f)$ has two edges, then it must be one of those found in Proposition~\ref{prop:newton-two-edges}. If an edge~$e$ has a vertex on a coordinate axis and does not have an interior lattice point, then Lemma~\ref{lemma:onboundary}(\ref{lemma:onboundary-a}) shows $e$ is Newton nondegenerate given our assumption on $\charac k$. The only case when $e$ may have an interior lattice point is when $e$ is an edge from $(2,1)$ to $(0,3)$, $(0,5)$, or $(0,7)$, as listed in the proposition.

If $\Gamma(f)$ has three edges, then it must be one of those found in Proposition~\ref{prop:newton-three-edges}. If an edge~$e$ has a vertex on a coordinate axis, then Lemma~\ref{lemma:onboundary}(\ref{lemma:onboundary-a}) shows $e$ is Newton nondegenerate given our assumption on $\charac k$. On an edge between $(2,1)$ and $(1,2)$, $(1,3)$, or $(1,4)$, $f$ must be Newton nondegenerate by Lemma~\ref{lemma:onboundary}(\ref{lemma:nointermediatepoint}). This completes the proof of~(\ref{part:degenerateedge}).

From~(\ref{part:degenerateedge}), we know that for each of the edges in on which $f$ is potentially Newton degenerate, the pair $(i_1-i_0, j_0-j_1)$ is one of $(2,2)$, $(2,4)$, $(2,6)$, $(2,8)$, or $(3,3)$, proving~(\ref{part:differences}).
\end{proof}

The Newton diagrams found in this section will be used in Section~\ref{sec:classifyA1Milnor} to find all elements of the Grothendieck--Witt ring of rank at most~$8$ that are $\A1$-Milnor numbers of two-variable polynomials.

\section{Change of variables}\label{sec:change} 

In this section, we prove that $\A1$-Milnor numbers are invariant under right equivalence and show that we can perform changes of variables on certain Newton degenerate polynomials to increase the Newton number while preserving the $\A1$-Milnor number. We use this to prove Theorem~\ref{thm:increaseNewton}, which shows that it suffices to consider Newton nondegenerate polynomials when finding all elements of the Grothendieck--Witt ring of rank at most~$8$ that arise as $\A1$-Milnor numbers of two-variable polynomials.

\subsection{Invariance under right equivalence}
Polynomial maps $f,g\colon\A{n}_k\to\A1_k$ are \emph{right equivalent} if there is a polynomial map $\Phi \colon\mathbb{A}_k^n\to \mathbb{A}_k^n$ with $\Phi(0)=0$ that has an inverse that is also a polynomial map such that $g=f\circ\Phi$. If $f$ and~$g$ are right equivalent, then it is not hard to see that the local algebras $Q_0(\nabla f)$ and $Q_0(\nabla g)$ are isomorphic. For completeness, we give a proof of this in the next two lemmas. 

\begin{lemma}\label{lemma:idealsequal}
Let $k$ be a field, let $f\colon\mathbb{A}_k^n\to\mathbb{A}_k^1$ be a polynomial map, and let $\Phi\colon \mathbb{A}_k^n\to\mathbb{A}_k^n$ be a polynomial map that has an inverse that is also a polynomial map. Let $g=f\circ\Phi$. We have 
\[(f_{x_1}(\Phi(\mathbf{x})),\dots,f_{x_n}(\Phi(\mathbf{x})))=(g_{x_1}(\mathbf{x}),\dots,g_{x_n}(\mathbf{x}))\]
as ideals in $k[x_1,\dots,x_n]$. 
\end{lemma}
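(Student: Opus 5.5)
The plan is to use the chain rule and the invertibility of the Jacobian matrix of~$\Phi$. Write $\Phi=(\Phi_1,\dots,\Phi_n)$ and let $J_\Phi(\mathbf{x})$ be its Jacobian matrix, with $(i,j)$ entry $\partial\Phi_i/\partial x_j$. Since $g=f\circ\Phi$, the chain rule for polynomials (a formal identity, valid over any field) gives
\[g_{x_j}(\mathbf{x})=\sum_{i=1}^n f_{x_i}(\Phi(\mathbf{x}))\,\frac{\partial \Phi_i}{\partial x_j}(\mathbf{x})\qquad\text{for all }j\in[n].\]
In row-vector form this reads $\nabla g(\mathbf{x})=\nabla f(\Phi(\mathbf{x}))\,J_\Phi(\mathbf{x})$. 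Each $g_{x_j}$ is therefore a $k[x_1,\dots,x_n]$-linear combination of the $f_{x_i}(\Phi(\mathbf{x}))$, which gives the inclusion $(g_{x_1},\dots,g_{x_n})\subseteq(f_{x_1}\circ\Phi,\dots,f_{x_n}\circ\Phi)$.

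For the reverse inclusion, I would show that $J_\Phi(\mathbf{x})$ is invertible as a matrix over $k[x_1,\dots,x_n]$. Let $\Psi$ be the polynomial inverse of~$\Phi$. Apply the chain rule to the identity $\Psi(\Phi(\mathbf{x}))=\mathbf{x}$ to get $J_\Psi(\Phi(\mathbf{x}))\,J_\Phi(\mathbf{x})=I_n$. Here $J_\Psi(\Phi(\mathbf{x}))$ is a matrix with polynomial entries, so $J_\Phi$ has a left inverse over the polynomial ring. Because the ring is commutative and the matrices are square, this left inverse is a two-sided inverse; equivalently, $\det J_\Phi$ is a unit, i.e.\ a nonzero constant. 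Multiplying the row-vector identity on the right by $J_\Phi(\mathbf{x})^{-1}$ then expresses each $f_{x_i}(\Phi(\mathbf{x}))$ as a polynomial combination of the $g_{x_j}(\mathbf{x})$. This gives equality of the two ideals.

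I do not expect a real obstacle. The only point that needs care is that the chain rule holds for formal derivatives of polynomials in arbitrary characteristic. I would justify this by noting that both sides are $k$-linear in~$f$ and satisfy the Leibniz rule, so it suffices to check the identity on the generators $f=x_i$, where it is immediate.
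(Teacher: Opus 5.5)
Your proof is correct, and your first inclusion is exactly the paper's. For the reverse inclusion you take a different route. The paper never uses invertibility of the Jacobian here. Instead it applies the same chain-rule inclusion to $f=g\circ\Phi^{-1}$, which gives $(f_{x_1},\dots,f_{x_n})\subseteq(g_{x_1}\circ\Phi^{-1},\dots,g_{x_n}\circ\Phi^{-1})$. It then substitutes $\Phi(\mathbf{x})$ for $\mathbf{x}$, i.e.\ applies the ring automorphism $h\mapsto h\circ\Phi$, to transport this into the desired inclusion. That argument is symmetric and needs no linear algebra, but it relies on the global polynomial inverse throughout.

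Your argument isolates what is actually needed: the identity $\nabla g=(\nabla f\circ\Phi)\,J_\Phi$ together with invertibility of $J_\Phi$ over the ring. The inverse $\Psi$ enters only to show that $\det J_\Phi$ is a unit. This buys two things. First, it produces the relation $\det J_{\Phi^{-1}}(\Phi(\mathbf{x}))\det J_\Phi(\mathbf{x})=1$, which the paper derives separately in the proof of Theorem~\ref{thm:rightequivalence}. Second, your reverse inclusion goes through verbatim in $k[x_1,\dots,x_n]_{\mathfrak m}$ whenever $\det J_\Phi(0)\neq0$, so no global polynomial inverse is required for the equality of ideals there.

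Your justification that the chain rule holds formally in every characteristic is a point the paper takes for granted. The derivation argument, checking agreement on the generators $x_i$, is the right way to settle it.
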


\begin{proof}
We prove two inclusions. In the first direction, the chain rule gives $g_{x_i}(\mathbf{x})=\sum_{j=1}^n f_{x_j}(\Phi(\mathbf{x}))\frac{\partial\Phi_j}{\partial x_i}(\mathbf{x})$, so
\[(g_{x_1}(\mathbf{x}),\dots,g_{x_n}(\mathbf{x}))\subseteq(f_{x_1}(\Phi(\mathbf{x})),\dots,f_{x_n}(\Phi(\mathbf{x}))).\]
In the other direction, since $\Phi$ is invertible, we have $f=g\circ \Phi^{-1}$, so the chain rule similarly gives $(f_{x_1}(\mathbf{x}),\dots,f_{x_n}(\mathbf{x}))\subseteq(g_{x_1}(\Phi^{-1}(\mathbf{x})),\dots,g_{x_n}(\Phi^{-1}(\mathbf{x})))$. Substituting $\Phi(\mathbf{x})$ for~$\mathbf{x}$ gives
\[(f_{x_1}(\Phi(\mathbf{x})),\dots,f_{x_n}(\Phi(\mathbf{x})))\subseteq(g_{x_1}(\mathbf{x}),\dots,g_{x_n}(\mathbf{x})).\qedhere\]
\end{proof}

We can now show that $Q_0(\nabla f)$ and $Q_0(\nabla g)$ are isomorphic.

\begin{lemma}\label{lemma:localringisom}
Let $k$ be a field, let $f\colon\mathbb{A}_k^n\to\mathbb{A}_k^1$ be a polynomial map, and let $\Phi\colon \mathbb{A}_k^n\to\mathbb{A}_k^n$ be a polynomial map with $\Phi(0)=0$ that has an inverse that is also a polynomial map. Let $g=f\circ\Phi$. The automorphism $\phi\colon k[x_1,\dots,x_n]\to k[x_1,\dots,x_n]$ determined by $\phi(h)=h\circ\Phi$ induces an isomorphism of $k$-algebras $\overline{\phi}\colon Q_0(\nabla f)\to Q_0(\nabla g)$. 
\end{lemma}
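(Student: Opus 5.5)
The map $\phi$ is a $k$-algebra automorphism of $k[x_1,\dots,x_n]$, with inverse $h\mapsto h\circ\Phi^{-1}$, because $\Phi$ has a polynomial inverse. The plan is to show that $\phi$ passes first to the localization at $\mathfrak m$ and then to the quotient by the Jacobian ideals. After that, we check that the induced map on the quotients is bijective.

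\textbf{Step 1: localization.} Since $\Phi(0)=0$, we have $\phi(\mathfrak m)\subseteq\mathfrak m$: each $\phi(x_i)=\Phi_i$ vanishes at the origin. The inverse $\Phi^{-1}$ also fixes the origin, since $\Phi^{-1}(0)=\Phi^{-1}(\Phi(0))=0$. Hence $\phi^{-1}(\mathfrak m)\subseteq\mathfrak m$ as well, so $\phi(\mathfrak m)=\mathfrak m$. In particular $\phi$ maps elements outside $\mathfrak m$ to elements outside $\mathfrak m$, i.e.\ units of the local ring to units: if $h(0)\neq0$ then $(h\circ\Phi)(0)=h(0)\neq0$. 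Therefore $\phi$ extends to an automorphism of $k[x_1,\dots,x_n]_{\mathfrak m}$ by $\phi(a/s)=\phi(a)/\phi(s)$. Its inverse is the analogous extension of $h\mapsto h\circ\Phi^{-1}$.

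\textbf{Step 2: Jacobian ideals.} By definition, $\phi(f_{x_i})=f_{x_i}(\Phi(\mathbf{x}))$. Lemma~\ref{lemma:idealsequal} says the ideal generated by these elements equals $(g_{x_1},\dots,g_{x_n})$ in the polynomial ring. So $\phi$ carries the ideal $(f_{x_1},\dots,f_{x_n})$ onto $(g_{x_1},\dots,g_{x_n})$. This equality persists after extending both ideals to the localization, because the extended $\phi$ is a ring automorphism there. Consequently the composite of $\phi$ with the quotient map $k[x_1,\dots,x_n]_{\mathfrak m}\to Q_0(\nabla g)$ is surjective with kernel exactly the extended ideal $(f_{x_1},\dots,f_{x_n})$. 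This gives an isomorphism $\overline\phi\colon Q_0(\nabla f)\to Q_0(\nabla g)$. It is $k$-linear because $\phi$ fixes constants.

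\textbf{Main obstacle.} The only delicate point is Step 1: making sure both $\phi$ and $\phi^{-1}$ preserve the complement of $\mathfrak m$, so that the localized map is an automorphism and not merely a homomorphism. This is exactly where the hypothesis $\Phi(0)=0$ enters. The rest is formal once Lemma~\ref{lemma:idealsequal} is applied.
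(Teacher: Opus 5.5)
Your proof is correct and takes essentially the same approach as the paper. Both arguments use Lemma~\ref{lemma:idealsequal} to match the Jacobian ideals under $\phi$, and both use $\Phi(0)=0=\Phi^{-1}(0)$ to pass to $k[x_1,\dots,x_n]_{\mathfrak{m}}$. The paper finishes by showing that $\overline{\phi}$ and $\overline{\phi^{-1}}$ are mutually inverse, rather than via surjectivity and a kernel computation, and it is less explicit than you about extending $\phi$ to the localization.
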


\begin{proof}
The map~$\phi$ sends $(f_{x_1}(\mathbf{x}),\dots,f_{x_n}(\mathbf{x}))$ to $(f_{x_1}(\Phi(\mathbf{x})),\dots,f_{x_n}(\Phi(\mathbf{x})))$, which by Lemma~\ref{lemma:idealsequal} is contained in $(g_{x_1}(\mathbf{x}),\dots,g_{x_n}(\mathbf{x}))$. Therefore $\overline{\phi}$ is well defined. The map~$\Phi^{-1}$ induces a ring isomorphism $\phi^{-1}\colon k[x_1,\dots,x_n]\to k[x_1,\dots,x_n]$ that is the inverse of~$\phi$. Since $\phi^{-1}$ maps $(g_{x_1}(\mathbf{x}),\dots,g_{x_n}(\mathbf{x}))$ to $ (g_{x_1}(\Phi^{-1}(\mathbf{x})),\dots,g_{x_n}(\Phi^{-1}(\mathbf{x})))$, which by Lemma~\ref{lemma:idealsequal} is contained in $(f_{x_1}(\mathbf{x}),\dots,f_{x_n}(\mathbf{x}))$, and $\Phi^{-1}(0)=0$, the map $\overline{\phi^{-1}}\colon Q_0(\nabla g)\to Q_0(\nabla f)$ is well defined. Since $\phi$ and~$\phi^{-1}$ are inverses, it follows that $\overline{\phi}$ and~$\overline{\phi^{-1}}$ are inverses, so $\overline{\phi}$ is an isomorphism.
\end{proof}

Lemma~\ref{lemma:localringisom} gives the well-known fact that Milnor numbers are invariant under right equivalence for invertible polynomial maps. We now show that $\A1$-Milnor numbers are invariant under right equivalence for invertible polynomial maps.

\begin{theorem}\label{thm:rightequivalence}
Let $k$ be a field of characteristic not~$2$, let $f\colon\A{n}_k\to\A1_k$ be a polynomial map with $f(0)=0$ and $1\leq\mu_0(f)<\infty$, and let $\Phi\colon \mathbb{A}_k^n\to\mathbb{A}_k^n$ be a polynomial map with $\Phi(0)=0$ that has an inverse that is also a polynomial map. Define $g=f\circ\Phi$. We have $\mu^{\mathbb{A}^1}_0(f)= \mu^{\mathbb{A}^1}_0(g)$.
\end{theorem}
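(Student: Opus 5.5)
We use the isomorphism $\overline{\phi}\colon Q_0(\nabla f)\to Q_0(\nabla g)$ of Lemma~\ref{lemma:localringisom}, induced by $h\mapsto h\circ\Phi$, and compare the distinguished socle elements. Since $\overline{\phi}$ is a $k$-algebra isomorphism, any functional $\eta_g$ on $Q_0(\nabla g)$ pulls back to $\eta_g\circ\overline{\phi}$ on $Q_0(\nabla f)$. This gives an isometry between the forms $(u,v)\mapsto\eta_g(\overline\phi(u)\overline\phi(v))$ and $(u',v')\mapsto \eta_g(u'v')$. The task is therefore to show
\[\overline{\phi}\bigl(\E{f}\bigr)=c^2\,\E{g}\]
for some $c\in k^\times$, or at least $\overline{\phi}(\E{f})=w\,\E{g}$ with $w$ a unit whose class contributes a square. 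Then $\eta_f:=\eta_g\circ\overline\phi$ rescaled by $c^{-2}$ is a valid choice for $f$, and the two Gram matrices differ by the square $c^{2}$, which is trivial in $\GW(k)$.

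\textbf{Step 1: compute the socle element of $g$.} Write $J_\Phi$ for the Jacobian matrix of $\Phi$. Since $\Phi$ has a polynomial inverse, $\det J_\Phi$ is a polynomial unit, hence a nonzero constant $c\in k^\times$. By the chain rule, $\nabla g(\mathbf x)=J_\Phi(\mathbf x)^T\,(\nabla f)(\Phi(\mathbf x))$. Choose $F_{i,j}$ with $f_{x_i}=\sum_j x_jF_{i,j}$. Since $\Phi(0)=0$, we can write $\Phi_j=\sum_\ell x_\ell\Phi_{j,\ell}$ for polynomials $\Phi_{j,\ell}$. Then
\[g_{x_i}=\sum_{m}\frac{\partial\Phi_m}{\partial x_i}\sum_j \Phi_j\,F_{m,j}(\Phi)=\sum_\ell x_\ell\sum_{m,j}\frac{\partial\Phi_m}{\partial x_i}F_{m,j}(\Phi)\Phi_{j,\ell}.\]
This gives a valid decomposition, and hence
\[\E{g}=\det(J_\Phi^T)\cdot\phi\bigl(\det(F_{i,j})\bigr)\cdot\det(\Phi_{j,\ell})=c\,\overline{\phi}\bigl(\E{f}\bigr)\cdot\det(\Phi_{j,\ell}).\]

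\textbf{Step 2: handle the factor $\det(\Phi_{j,\ell})$.} This is the main obstacle. The element $\det(\Phi_{j,\ell})$ is a polynomial whose constant term equals $\det J_\Phi(0)=c$. It is multiplied by a socle element, and the socle element is killed by $\m$. So modulo the ideal, $\det(\Phi_{j,\ell})\cdot\overline\phi(\E{f})=c\,\overline\phi(\E{f})$. For this we need the fact that $\overline{\phi}(\E{f})$ lies in the socle of $Q_0(\nabla g)$. That holds because $\overline\phi$ is an isomorphism of local algebras preserving the maximal ideal, since $\Phi(0)=0$. We conclude that $\E{g}=c^2\,\overline\phi(\E{f})$.

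\textbf{Step 3: conclude.} Take $\eta_g$ with $\eta_g(\E{g})=1$, and set $\eta_f:=c^2\,\eta_g\circ\overline\phi$. Then $\eta_f(\E{f})=\eta_g(c^2\overline\phi(\E{f}))=1$. With respect to a basis $\{u_i\}$ of $Q_0(\nabla f)$ and the basis $\{\overline\phi(u_i)\}$ of $Q_0(\nabla g)$, the Gram matrix of $\omega_{\nabla f}$ is $c^2$ times that of $\omega_{\nabla g}$. Since $\langle c^2\rangle=\langle1\rangle$, it follows that $\mu_0^{\A1}(f)=\mu_0^{\A1}(g)$. The step to double-check is the decomposition in Step~1, in particular that $E_0$ is independent of the choice of $F_{i,j}$, which is quoted from \cite{KassWickelgren2019}.
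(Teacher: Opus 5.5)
Your proof is correct, but it follows a different route from the paper's: you normalize with the distinguished socle element, whereas the paper works with the Hessian.

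\textbf{The paper's route.} It uses the second-order chain rule: $H_g=J_\Phi^T\,H_f(\Phi)\,J_\Phi$ plus a correction term $\sum_\ell f_{x_\ell}(\Phi)\,H_{\Phi_\ell}$, and that correction term vanishes in $Q_0(\nabla g)$. Taking determinants gives $\det H_g=D^2\,\overline\phi(\det H_f)$ with $D=\det J_\Phi$. The paper then sets $\eta_f=\eta_g(D^2\overline\phi(\cdot))$ and compares Gram matrices in the bases $\{v_\ell\}$ and $\{D\,\overline\phi(v_\ell)\}$. The square factor falls out immediately from the congruence $J^THJ$. No factorization $\Phi_j=\sum_\ell x_\ell\Phi_{j,\ell}$ and no socle argument are needed.

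\textbf{Your route.} You have to work a little harder for the square. One factor of $c$ comes from $\det J_\Phi^T$. The other appears only after you reduce $\det(\Phi_{j,\ell})$ to its constant term $\det J_\Phi(0)=c$, using $\m\cdot\overline\phi(\E{f})=0$.

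\textbf{What your route buys.} The paper's normalization $\eta(\det H_f)=\mu_0(f)$ determines the EKL form only when $\charac k\nmid\mu_0(f)$. Otherwise $\det H_f=\mu_0(f)\,\E{f}=0$ in $Q_0(\nabla f)$, and the condition is vacuous. Your normalization $\eta(\E{f})=1$ treats every characteristic $\neq 2$ uniformly, and that is exactly the generality the theorem claims.

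\textbf{Two small remarks.}
\begin{itemize}
\item The socle membership you need can be checked directly rather than quoted. Multiplying $(F_{i,j})\,\mathbf{x}=(f_{x_1},\dots,f_{x_n})^T$ by the adjugate of $(F_{i,j})$ gives $x_\ell\det(F_{i,j})\in(f_{x_1},\dots,f_{x_n})$ for every $\ell$.
\item Your opening paragraph states the relation in the wrong direction: $\overline\phi(\E{f})=c^2\E{g}$, whereas Step~2 gives $\E{g}=c^2\,\overline\phi(\E{f})$. This is harmless, since either way the discrepancy is a square, and Step~3 uses the correct direction.
\end{itemize}
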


\begin{proof} 
Let $\phi\colon k[x_1,\dots,x_n]\to k[x_1,\dots,x_n]$ be the automorphism defined by $\phi(h)=h\circ\Phi$, and let $\overline{\phi}\colon Q_0(\nabla f)\to Q_0(\nabla g)$ be the induced isomorphism from Lemma~\ref{lemma:localringisom}. Since $\Phi^{-1}\circ\Phi$ is the identity map, the chain rule gives that $J_{\Phi^{-1}}(\Phi(\mathbf{x}))J_{\Phi}(\mathbf{x})=I$, and hence that $\det(J_{\Phi^{-1}}(\Phi(\mathbf{x})))\det(J_{\Phi}(\mathbf{x}))=1$. Thus $\det J_{\Phi}(\mathbf{x})$ is a unit in $Q_0(\nabla g)$; we define $D=\det J_{\Phi}(\mathbf{x})$. 

We will show the result by following the construction of the $\A1$-Milnor number for both $f$ and~$g$. By the chain rule, 
\[H_{g}(\mathbf{x})_{i,j}=\bigl(J^T_{\Phi}(\mathbf{x})H_{f}(\Phi(\mathbf{x}))J_{\Phi}(\mathbf{x})\bigr)_{i,j}+\sum_{\ell=1}^nf_{x_{\ell}}(\Phi(\mathbf{x}))\biggl(\frac{\partial^2\Phi_{\ell}}{\partial x_i\partial x_j}\biggr)(\mathbf{x}).\]
Since $f_{x_{\ell}}(\Phi(\mathbf{x})) = 0$ in $Q_0(\nabla g)$ for all $\ell\in[n]$ by Lemma~\ref{lemma:idealsequal}, we have 
\[H_{g}(\mathbf{x})=J^T_{\Phi}(\mathbf{x}) H_{f}(\Phi(\mathbf{x}))J_{\Phi}(\mathbf{x})\]
in $Q_0(\nabla g)$. Taking determinants, this means
\[\det H_{g}(\mathbf{x})=\det J_{\Phi}(\mathbf{x})^2\det H_{f}(\Phi(\mathbf{x}))=D^2\overline{\phi}(\det H_{f}(\mathbf{x}))\]
in $Q_0(\nabla g)$. Now suppose we have a $k$-linear functional $\eta_g\colon Q_0(\nabla g)\to k$ with $\eta_g(\det H_g(\mathbf{x}))=\mu_0(g)$. Define a map $\eta_f\colon Q_0(\nabla f)\to k$ by $\eta_f(v)=\eta_g(D^2\overline{\phi}(v))$ for all $v\in Q_0(\nabla f)$. This is a $k$-linear map as both $\eta_g$ and~$\overline\phi$ are $k$-linear and $D$ is a unit in $Q_0(\nabla g)$, and 
\[\eta_f(\det H_{f}(\mathbf{x}))=\eta_{g}(D^2\overline{\phi}(\det H_{f}(\mathbf{x})))=\eta_g(\det H_{g}(\mathbf{x}))=\mu_0(g)=\mu_0(f).\]
Therefore $\eta_f$ can be used to define $\mu^{\mathbb{A}^1}_0(f)$.

We now show that we can choose bases such that the Gram matrix of $\mu^{\mathbb{A}^1}_0(f)$ coming from~$\eta_f$ is the same as the Gram matrix of $\mu^{\mathbb{A}^1}_0(g)$ coming from~$\eta_g$. Let $\mathcal{B}=\{v_{\ell}(\mathbf{x})\}_{\ell=1}^r$ be a basis of $Q_0(\nabla f)$. Since $D$ is a unit in $Q_0(\nabla g)$, it follows by Lemma~\ref{lemma:localringisom} that $\mathcal{B'}\coloneqq\{D\overline\phi(v_{\ell}(\mathbf{x}))\}_{\ell=1}^r$ is a basis of $Q_0(\nabla g)$. Then
\[\eta_f(v_i(\mathbf{x}) v_j(\mathbf{x}))=\eta_g(D^2\overline\phi(v_i(\mathbf{x})v_j(\mathbf{x})))=\eta_g(D\overline\phi(v_i(\mathbf{x}))D\overline\phi(v_j(\mathbf{x})))\]
for all $i,j\in[r]$, meaning each entry of the Gram matrix of $\mu^{\mathbb{A}^1}_0(f)$ with respect to~$\mathcal{B}$ is the same as the corresponding entry of the Gram matrix of $\mu^{\mathbb{A}^1}_0(g)$ with respect to~$\mathcal{B}'$. Hence $\mu^{\mathbb{A}^1}_0(f)=\mu^{\mathbb{A}^1}_0(g)$.
\end{proof}

\subsection{Change of variables and Newton number}
We now turn our attention to how certain changes of variables affect a Newton polyhedron, with an eye toward increasing the Newton number of a Newton degenerate polynomial without changing the $\A1$-Milnor number. 

\begin{lemma}\label{lemma:nwshiftpoint}
Let $k$ be a field, let $a\in k^{\times}$, let $i_0,j_0\in\Z_{\geq 0}$, and let $m\in\Z_{>0}$. Let $f=x^{i_0} y^{j_0}\in k[x,y]$, and let $\Phi\colon\mathbb{A}_k^2\to\mathbb{A}_k^2$ be defined by $\Phi(x,y)= (x+ay^m, y)$. Define $g=f\circ \Phi\in k[x,y]$.
\begin{enumerate}[label=\textup{(\alph*)},ref=\textup{\alph*}]
\item The coefficient of the monomial $x^{i_0} y^{j_0}$ in~$g$ is~$1$. 
\item If a monomial $x^{i}y^{j}$ has nonzero coefficient in~$g$, then $(i, j)= (i_0-\ell,j_0+ m\ell)$ for some integer $\ell$ satisfying $0\leq \ell\leq i_0$. 
\end{enumerate}
\end{lemma}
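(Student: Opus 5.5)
The plan is to compute $g$ directly from the definition and expand with the binomial theorem. Since $\Phi(x,y)=(x+ay^m,y)$, we have
\[g=f\circ\Phi=(x+ay^m)^{i_0}y^{j_0}=\sum_{\ell=0}^{i_0}\binom{i_0}{\ell}a^{\ell}x^{i_0-\ell}y^{m\ell}\,y^{j_0}=\sum_{\ell=0}^{i_0}\binom{i_0}{\ell}a^{\ell}x^{i_0-\ell}y^{j_0+m\ell}.\]
This single expansion, valid over any commutative ring and hence over $k$, is the whole argument. Both parts will be read off from it.

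For part (b), every monomial appearing in the sum has exponent $(i_0-\ell,\,j_0+m\ell)$ for some $0\le\ell\le i_0$. We also need to know that distinct values of $\ell$ give distinct monomials. Since $m>0$, the map $\ell\mapsto(i_0-\ell,j_0+m\ell)$ is injective, so no two terms combine. A coefficient can still vanish, for example when $\binom{i_0}{\ell}=0$ in positive characteristic. That does not matter, because part (b) only asserts that any monomial with nonzero coefficient has this shape.

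For part (a), the monomial $x^{i_0}y^{j_0}$ arises only from $\ell=0$, again by injectivity. Its coefficient is therefore $\binom{i_0}{0}a^0=1$, which is nonzero in every characteristic.

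I do not expect a real obstacle. The only point that needs to be stated explicitly is that $m>0$ keeps the terms from colliding, so that the coefficient in (a) is exactly $1$ and is not altered by contributions from other values of $\ell$.
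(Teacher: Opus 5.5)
Your proof is correct and matches the paper's argument: expand $(x+ay^m)^{i_0}y^{j_0}$ by the binomial theorem and read off both parts. One small remark: distinct values of $\ell$ already give distinct monomials because the $x$-exponent $i_0-\ell$ determines $\ell$, so the injectivity you invoke does not need $m>0$.
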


\begin{proof}
We have
\[g(x, y)=f(x+ay^m, y)=(x+ay^m )^{i_0}y^{j_0}=x^{i_0} y^{j_0} + \sum_{\ell=1}^{i_0}\binom{i_0}{\ell}a^{\ell}x^{i_0-\ell}y^{j_0+m \ell}.\] 
Thus the monomial $x^{i_0}y^{j_0}$ has coefficient~$1$ in~$g$ and any monomial $x^{i}y^{j}$ that has nonzero coefficient in $g$ has $(i,j)=(i_0-\ell,j_0+ m\ell)$ for some integer $\ell$ satisfying $0\leq \ell\leq i_0$. 
\end{proof}

We now show that under certain conditions $\Npoly_+(g)$ is similar to $\Npoly_+(f)$ but with some well-controlled adjustments. An example of the situation we consider is shown in Figure~\ref{fig:nwshift}.

\begin{figure}
\begin{tikzpicture}[scale=0.4]
\usetikzlibrary{automata, arrows.meta, positioning}

\draw[->,semithick] (0,0)--(15,0);
\draw[->,semithick] (0,0)--(0,10);

\node[circle,draw=black,fill=black, label=right:{$(0,N)$},inner sep=0pt, minimum size=.15cm] (A) at (0,9) {};
\node[circle,draw=black,fill=black, label=right:{$(1,\delta)$},inner sep=0pt, minimum size=.15cm] (B) at (2,5) {};
\node[circle,draw=black,fill=black, label=above right:{$v$},inner sep=0pt, minimum size=.15cm] (C) at (4,3) {};
\node[circle,draw=black,fill=black, label=above right:{$v' $},inner sep=0pt, minimum size=.15cm] (D) at (8,1) {};
\node[circle,draw=black,fill=black, label=below:{$(M,0)$},inner sep=0pt, minimum size=.15cm] (E) at (14,0) {};
\node[circle,draw=black,fill=black, label=left:{$(0,\delta+m)$},inner sep=0pt, minimum size=.15cm] (F) at (0,7) {};

\draw[semithick] (A)--(B)--(C)--(D)--(E);
\draw[dashed] (7,0)--(F);
\end{tikzpicture}
\caption{Possible Newton diagram $\Npoly(f)$ in Lemma~\ref{lemma:nwshiftb}.\label{fig:nwshift}}
\end{figure}

\begin{lemma}
\label{lemma:nwshiftb}
Let $k$ be a field, let $a\in k^{\times}$, and let $m,\delta\in \Z_{>0}$. Let $f\in k[x,y]$ be a convenient polynomial, and let $e$ be an edge of $\Npoly(f)$ that contains the point $(1,\delta)$ and has slope at least~$-m$. Let $\Phi\colon\mathbb{A}_k^2\to\mathbb{A}_k^2$ be defined by $\Phi(x,y)= (x+ay^m, y)$, and let $g=f\circ \Phi$. If $(i,j)\in\supp g$, then either
\begin{enumerate}[label=\textup{(\alph*)},ref=\textup{\alph*}]
\item\label{lemma-case-i>0}$(i,j)\in \Npoly_+(f)$, or 
\item\label{lemma-case-i=0}$i=0$ and $j\geq \delta+m$.
\end{enumerate}
\end{lemma}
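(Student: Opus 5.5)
The plan is to reduce everything to single monomials via Lemma~\ref{lemma:nwshiftpoint} and then use one linear inequality coming from the supporting line of~$e$, together with convexity of $\Npoly_+(f)$.

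Write $f=\sum_{(i_0,j_0)\in\supp f}a_{i_0,j_0}x^{i_0}y^{j_0}$, so that $g=\sum a_{i_0,j_0}\,(x^{i_0}y^{j_0})\circ\Phi$. Cancellation between different terms can only remove monomials. Hence every $(i,j)\in\supp g$ lies in the support of $(x^{i_0}y^{j_0})\circ\Phi$ for some $(i_0,j_0)\in\supp f$. By Lemma~\ref{lemma:nwshiftpoint}(b), such a point has the form $(i,j)=(i_0-\ell,\,j_0+m\ell)$ with $0\le\ell\le i_0$. It therefore suffices to fix $(i_0,j_0)\in\supp f$ and $\ell$, and show that $p=(i_0-\ell,j_0+m\ell)$ satisfies (a) or~(b). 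If $i_0=0$ then $\ell=0$ and $p\in\supp f\subseteq\Npoly_+(f)$. So assume $i_0\ge1$.

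The key inequality comes from $e$ being a compact face of the convex set $\Npoly_+(f)$. Let $s<0$ be the slope of~$e$, so $-m\le s<0$. The line through $(1,\delta)$ with slope~$s$ supports $\Npoly_+(f)$, so every $(i',j')\in\Npoly_+(f)$ satisfies $j'\ge\delta+s(i'-1)$. Apply this to $(i_0,j_0)$. Since $i_0-1\ge0$ and $s\ge-m$,
\[j_0\ \ge\ \delta+s(i_0-1)\ \ge\ \delta-m(i_0-1),\qquad\text{i.e.}\qquad j_0+m(i_0-1)\ge\delta.\]

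If $\ell=i_0$, then $p=(0,\,j_0+mi_0)$, and the inequality gives $j_0+mi_0\ge\delta+m$, which is case~(b).

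If $\ell<i_0$, then $p$ has first coordinate at least~$1$. Set $q=(1,\,j_0+m(i_0-1))$. The point $q$ is $(1,\delta)$ plus a nonnegative vertical vector, by the inequality. Since $(1,\delta)\in e\subseteq\Npoly_+(f)$ and $\Npoly_+(f)$ is closed under adding $(\R_{\ge0})^2$, we get $q\in\Npoly_+(f)$. The point $p$ lies on the segment from $(i_0,j_0)$ to~$q$, since all three points lie on the line through $(i_0,j_0)$ with direction $(-1,m)$ and $1\le i_0-\ell\le i_0$. By convexity of $\Npoly_+(f)$, $p\in\Npoly_+(f)$, which is case~(a).

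The main point requiring care is the supporting-line inequality, namely that all of $\Npoly_+(f)$, not just the vertices of~$e$, lies on or above the line containing~$e$. This is the step where the hypothesis that the slope is at least~$-m$ enters. I would justify it directly: $\Npoly_+(f)$ is convex, and $e$ is one of its compact edges, hence a face cut out by a supporting line. Everything else is bookkeeping with Lemma~\ref{lemma:nwshiftpoint}.
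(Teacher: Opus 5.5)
Your proof is correct and follows essentially the paper's route: reduce via Lemma~\ref{lemma:nwshiftpoint} to points $(i_0-\ell,\,j_0+m\ell)$, then use convexity of $\Npoly_+(f)$ and the slope bound on~$e$ to place each such point either in $\Npoly_+(f)$ or on the $y$-axis at height at least $\delta+m$. Your single supporting-line inequality $j_0+m(i_0-1)\geq\delta$ is a tidier execution of that route. It merges the paper's $i_0=1$ and $i_0\geq2$ subcases, and the auxiliary point $q$ makes rigorous the paper's brief justification of the $i\geq1$ case. It also sidesteps the paper's displayed step $j_0-\delta\geq (j_0-\delta)/(i_0-1)$, which fails when $j_0<\delta$ and $i_0>2$.
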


\begin{proof}
Let $(i_1,j_1)$ be the right endpoint of~$e$. Since $e$ has slope at least~$-m$, every edge of $\Gamma(f)$ to the right of $(i_1,j_1)$ has slope greater than~$-m$. If $(i,j)\in\supp g$, then by Lemma~\ref{lemma:nwshiftpoint} $(i,j)=(i_0,j_0)+ \ell(-1,m)$ for some $(i_0,j_0)\in \supp f$ and some nonnegative integer $\ell$. If $\ell=0$, then $(i,j)\in \Npoly_+(f)$, so (\ref{lemma-case-i>0}) is satisfied. We thus suppose $\ell\geq 1$. By the convexity of the Newton polyhedron $\Npoly_+(f)$, the slopes of the edges in $\Npoly(f)$ are negative and increase as the horizontal coordinate increases. Since the line $(i_0, j_0) + t(-1, m)$ has slope~$-m$ and $e$ has slope at least~$-m$, the point $(i,j)$ is contained in $\Gamma_+(f)$ as long as $i\geq 1$, in which case $\eqref{lemma-case-i>0}$ is satisfied. It remains to consider the case when $i=0$ and $(0,j)\not\in \Gamma_+(f)$. As $i=0$, we have $i_0=\ell\geq 1$ and $j=j_0+mi_0$. If $i_0=1$, then $(1, j_0)\in \Npoly_+(f)$. As $(1, \delta)\in \Npoly(f)$ by the hypotheses, we have $j_0\geq \delta$ and thus $j\geq \delta+m$. If $i_0\geq2$, then by convexity of $\Gamma_+(f)$ the slope of the line segment from $(1,\delta)$ to $(i_0,j_0)$ is at least~$-m$. Therefore we have
\[j_0-\delta\geq\frac{j_0-\delta}{i_0-1}\geq-m,\]
and hence $j=j_0+mi_0\geq \delta-m+mi_0\geq\delta+m$, meaning (\ref{lemma-case-i=0}) is satisfied.
\end{proof}

We are now ready to show that for certain Newton degenerate polynomials there is another polynomial with the same $\A1$-Milnor number but a larger Newton number. For a two-variable polynomial~$f$, if $e$ is an edge of the Newton diagram $\Npoly(f)$, let $f_e$ denote the polynomial consisting of the terms of~$f$ whose support lies on~$e$. 

\begin{proposition}\label{prop:increaseNewton}
Let $k$ be a field of characteristic not~$2$, let $a\in k^{\times}$, and let $m,\delta\in \Z_{>0}$. Let $f\in k[x,y]$ be a convenient polynomial with $f(0)=0$ and $1\leq\mu_0(f)<\infty$. If $e$ is an edge of $\Npoly(f)$ that contains the point $(1,\delta)$ and $f_e$ has $(x-ay^m)^2$ as a factor, then there exists another convenient polynomial $g\in k[x,y]$ with $g(0)=0$ and $1\leq\mu_0(g)<\infty$ such that $\mu^{\mathbb{A}^1}_0(g)=\mu^{\mathbb{A}^1}_0(f)$ and $\nug>\nuf$. 
\end{proposition}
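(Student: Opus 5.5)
Set $\Phi(x,y)=(x+ay^m,y)$. This map has polynomial inverse $(x,y)\mapsto(x-ay^m,y)$ and fixes the origin. Put $h=f\circ\Phi$. By Theorem~\ref{thm:rightequivalence} we have $\mu^{\A1}_0(h)=\mu^{\A1}_0(f)$. By Lemma~\ref{lemma:localringisom}, $\mu_0(h)=\mu_0(f)$, so $1\le\mu_0(h)<\infty$, and clearly $h(0)=0$. The polynomial $h$ need not be convenient, so we add $x^{r+2}+y^{r+2}$ with $r=\mu_0(f)$ and take $g=h+x^{r+2}+y^{r+2}$. By Lemma~\ref{lemma:ne}, $g$ has the same local algebra and the same $\A1$-Milnor number as $h$, and $g$ is convenient. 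It then remains to show $\nu(g)>\nu(f)$.

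First we check that $\Phi$ is compatible with the hypotheses of Lemma~\ref{lemma:nwshiftb}. The edge $e$ contains $(1,\delta)$, and $f_e$ is weighted homogeneous for the weight making $e$ a level set. Since $(x-ay^m)^2$ divides $f_e$, that factor must itself be weighted homogeneous, so $x$ and $y^m$ have equal weight. Hence $e$ has slope exactly $-m$ (in particular at least $-m$).

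Lemma~\ref{lemma:nwshiftb} then says every point of $\supp h$ lies in $\Npoly_+(f)$ or on the $y$-axis at height at least $\delta+m$. The horizontal axis point $(M,0)$ of $f$ is untouched, since $\Phi$ fixes monomials with no $x$ and maps $x^M$ to $(x+ay^m)^M$, which contains $x^M$. The vertical axis point may move, but only upward, and the added $y^{r+2}$ keeps the diagram convenient. Because $\Npoly_+(g)\subseteq\Npoly_+(f)\cup\{\text{points on the $y$-axis}\}$, changes happen only near $e$.

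The key step is to show that $\Npoly_+(g)$ is strictly smaller than $\Npoly_+(f)$ near $e$. Write the terms of $f$ on $e$ as $f_e=(x-ay^m)^2u$, with $u$ weighted homogeneous. Under $\Phi$, the weighted-homogeneous part of $h$ on the line through $e$ is $f_e\circ\Phi=x^2\,u(x+ay^m,y)$. All its monomials have $x$-exponent at least $2$. Lower-order parts of $f$ (points of $\Npoly_+(f)$ strictly above the line of $e$) map to points strictly above that line. So the new Newton polyhedron meets the line of $e$ only at points with $i\ge2$, together with whatever lies on the line beyond $e$'s right endpoint.

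In particular the left endpoint of $e$, or at least the stretch of $e$ with $i<2$ (which contains $(1,\delta)$), no longer lies on the boundary. Any new axis points have $j\ge\delta+m$, which is at or above the intersection of the line of $e$ with the $y$-axis, namely $(0,\delta+m)$. Consequently $\Npoly_-(g)$ relative to $\Npoly_-(f)$ loses area near $(1,\delta)$ while gaining area only from a higher $y$-intercept.

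This last comparison is where I expect the main obstacle. The $N$ in the Newton number formula may change, so Lemma~\ref{lemma:newtonpolygonareas} does not apply directly. My plan is to use the additivity of the Newton number formula over the region to the left of $x=1$ versus the rest, or to compare via the non-convenient definition $\nu=\sup\nu(\,\cdot+x^i+y^j)$. Another option is to note that $2\Area-N$ increases when the vertical intercept is raised along a steeper boundary edge, so a higher intercept never decreases $\nu$. Combined with the strict removal of the part of $\Npoly_+(f)$ near $(1,\delta)$, where the point $(1,\delta)$ now lies strictly below $\Npoly(g)$, this gives $\nu(g)>\nu(f)$. One can make this rigorous by first replacing $f$ with $f$ plus a high power of $y$ so both diagrams share the same axis points, and then invoking Lemma~\ref{lemma:newtonpolygonareas}.
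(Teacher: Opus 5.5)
Your setup matches the paper's: the shear $\Phi(x,y)=(x+ay^m,y)$, invariance under right equivalence, slope $-m$ for $e$, and the observation that $f_e\circ\Phi=x^2u(x+ay^m,y)$ while $\Phi$ preserves the weight $mi+j$, so $(1,\delta)$ drops out of the Newton polyhedron.

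The gap is the inequality $\nu(g)>\nu(f)$ itself, which is the real content of the proposition. You list several strategies but carry none of them out. The one you call rigorous does not work: $f$ is already convenient, so adding a high power of $y$ to $f$ leaves $\Gamma(f)$ unchanged and cannot align its axis points with those of $\Gamma(g)$.

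Your supporting claim that the vertical intercept moves ``only upward'' is also false. For $f=x(x-y)^2+xy^5+y^{100}$ (so $a=m=1$, $\delta=2$) one gets $h=x^3+x^2y+xy^5+y^6+y^{100}$, and the intercept drops from $100$ to $6$. Adding $y^{r+2}$ likewise lowers it whenever $N>r+2$. So monotonicity of $\nu$ under raising the intercept is not enough.

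What is missing is an auxiliary \emph{diagram} rather than an auxiliary polynomial. Let $(0,N')$ be the vertical intercept of $\Gamma(g)$. Let $\Gamma'$ be $\Gamma(f)$ with the segment from $(0,N)$ to $(1,\delta)$ replaced by the segment from $(0,N')$ to $(1,\delta)$. Since $(1,\delta)\in\Gamma(f)$ lies at $i=1$, the strip $0\le i\le 1$ of $\Gamma_-(f)$ is a trapezoid of area $(N+\delta)/2$. Hence $2\Area(\Gamma_-)-N$, and so $\nu$, is unchanged: $\nu(\Gamma')=\nu(f)$, whether $N'$ lies above or below $N$.

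This requires $N'>\delta+m$ strictly, both for $\Gamma'$ to be a Newton diagram and for strictness. You only assert $j\ge\delta+m$ for new axis points. If $(0,\delta+m)$ were in $\supp g$, it would lie on the line of $e$ together with points of $\supp h$ having $i\ge 2$, which would put $(1,\delta)$ back into $\Gamma_+(g)$. The paper avoids this by adding $y^{N_0}$ with $N_0\ge\delta+m+1$. With your $y^{r+2}$ you must prove $r+2>\delta+m$. This follows from Kushnirenko's bound $r\ge\nu(f)$ together with a short area estimate giving $\nu(f)\ge\delta+m-1$, which uses that $e$ spans at least two units horizontally.

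With that in place, $\Gamma(g)$ and $\Gamma'$ share axis points. Lemma~\ref{lemma:nwshiftb} and the minimality of $N'$ give $\Gamma_+(g)\subseteq\Gamma'_+$, and $(1,\delta)\in\Gamma'_+\setminus\Gamma_+(g)$. Lemma~\ref{lemma:newtonpolygonareas} then yields $\nu(g)>\nu(\Gamma')=\nu(f)$.

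Finally, adding $x^{r+2}$ is unnecessary, since $(M,0)\in\supp h$ already. When $M>r+2$ it also creates a second axis mismatch that you would have to handle.
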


\begin{proof}
To prove the proposition, we will explicitly construct $g$ using a change of variables $\Phi\colon\mathbb{A}_k^2\to\mathbb{A}_k^2$ defined by $\Phi(x,y)= (x+ay^m, y)$. We first define $h=f\circ \Phi$. Since $\Phi$ is invertible as a polynomial map, Theorem~\ref{thm:rightequivalence} gives that $\mu^{\A1}_0(h)= \mu^{\A1}_0(f)$. 

To prove the proposition, we will modify $h$ to find a convenient polynomial~$g$ with the same Milnor number and $\nu(g)>\nu(f)$. Let $(M,0)$ and $(0,N)$ be the points of $\Npoly(f)$ on the axes. By Lemma~\ref{lemma:nwshiftpoint}, we also have $(M,0)\in \Npoly(h)$. Also, the edge~$e$ contains $(1,\delta)$ has slope~$-m$ as $x-ay^m$ is a factor of~$f_e$. Hence since $(0, N)\in \Gamma(f)$, we have $N\geq \delta+m$. Moreover, the equality $N= \delta+m$, can only occur if $(0,N)$ lies on~$e$. By Lemma~\ref{lemma:nwshiftpoint}, the point $(0,j)$ is not in $\Gamma(h)$ for $j<\delta+m$. Moreover, this lemma shows that the points $(0,\delta+m)$ and $(1, \delta)$ are not in $\supp h$ as the assumption that $f_e$ is a multiple of $(x-ay^m)^2$ means that $f_e(x+ay^m,y)$ is a multiple of~$x^2$. Thus $(0, \delta+m)\not\in \Gamma(h)$.

We now define $g\in k[x,y]$ by letting $g=h$ if $h$ is convenient. If $h$ is not convenient, we define $g=h+y^{N_0}$, where $N_0=\max\{\mu_0(g)+2,\delta+m+1\}$. By Lemma~\ref{lemma:ne}, $\mu_0^{\A1}(g)=\mu_0^{\A1}(h)$. As a result, we have a convenient polynomial~$g$ with $\mu_0^{\A1}(g)=\mu_0^{\A1}(f)$ and a vertex $(0,N^\prime)\in \Gamma(g)$ with $N'> \delta+m$.

It remains to show that $\nug>\nuf$. We do this by creating a Newton diagram~$\Gamma'$ with the property that $\nu(\Gamma')=\nuf$ and $\nu(\Gamma')<\nug$. Define $\Gamma'$ by starting with $\Gamma(f)$ and replacing the line segment from $(0,N)$ to $(1,\delta)$ by a line segment from $(0,N')$ to $(1,\delta)$. Note that $\Gamma'$ is a valid Newton diagram because the edge from $(0,N')$ to $(1,\delta)$ has slope less than~$-m$ and the edge with $(1,\delta)$ as it's leftmost vertex has slope~$-m$. Since $\Gamma(f)$ and $\Gamma'$ contain the same points $(i,j)$ when $i\geq1$, the regions $\Gamma_-(f)$ and $\Gamma_-^\prime$ agree to the left of $i=1$. Hence we have that 
\[\Area(\Npoly_{-}(f))-\frac{N-\delta}{2}=\Area(\Npoly'_{-})-\frac{N'-\delta}{2}.\]
The formula for Newton number then gives $\nuf=\nu(\Gamma')$. 

We now show $\nu(\Gamma')<\nug$. We do this by first showing $\Gamma_{-}^\prime\subsetneq \Gamma_{-}(g)$. Then Lemma~\ref{lemma:newtonpolygonareas} gives $\nu(\Gamma')<\nug$ as $\Gamma_{-}$ and $\Gamma_{-}(g)$ are convenient Newton diagrams with the same vertices on the coordinate axes.

We now prove $\Gamma_{-}^\prime\subsetneq \Gamma_{-}(g)$. We have seen above that $(1,\delta)\not\in \supp h$. By Lemma~\ref{lemma:nwshiftpoint}, there is at least one vertex $(i,j)$ on~$e$ in $\supp h$; let $(i_0,j_0)$ be the one with minimal $i$-coordinate. We know $i_0\geq 2$. Then $(i_0,j_0)\in \Gamma(h)$ and all edges in $\Gamma(h)$ to the left (resp.\ right) of $(i_0,j_0)$ have slope less than (resp.\ at least) $-m$. Thus $\Gamma(f)$ and $\Gamma(h)$ coincide on the half-plane to the right of $(i_0, j_0)$ by Lemma~\ref{lemma:nwshiftpoint}. Hence $\Gamma^\prime$ and $\Gamma(g)$ coincide on this half plane. Figure~\ref{fig:increasenewton} illustrates this construction. 

\begin{figure}
\begin{tikzpicture}[scale=0.4]
\usetikzlibrary{automata, arrows.meta, positioning}

\draw[->,semithick] (0,0)--(15,0);
\draw[->,semithick] (0,0)--(0,15);

\node[circle,draw=black,fill=black, label=below:{\hspace*{-.2in}$(1,\delta)$},inner sep=0pt, minimum size=.15cm] (B) at (2,7) {};
\node[circle,draw=black,fill=black, label=above right:{$(i_0, j_0)$},inner sep=0pt, minimum size=.15cm] (C) at (4,5) {};
\node[circle,draw=black,fill=black, label=above right:{$v$},inner sep=0pt, minimum size=.15cm] (C2) at (6,3) {};
\node[circle,draw=black,fill=black, label=above right:{$v' $},inner sep=0pt, minimum size=.15cm] (D) at (10,1) {};
\node[circle,draw=black,fill=black, label=below:{$(M,0)$},inner sep=0pt, minimum size=.15cm] (E) at (14,0) {};
\node[circle,draw=black,fill=black, label=left:{$(0,\delta+m)$},inner sep=0pt, minimum size=.15cm] (F) at (0,9) {};
\node[circle,draw=black,fill=black, label=left:{$(0,N^\prime)$},inner sep=0pt, minimum size=.15cm] (G) at (0,13) {};
\node[circle,draw=black,fill=black, label=above right:{$w$}, inner sep=0pt, minimum size=.15cm] (H) at (2,8) {};

\draw[dotted] (0,9)--(9,0);
\draw[dotted] (C) -- node[below left] {$\ell$} (C2);
\draw[dashed] (G)--(B)--(C);
\draw[semithick] (G)--(H)--(C)--(C2)--(D)--(E);
\draw[dotted] (F)--(B);
);
\end{tikzpicture}
\caption{A possible diagram in the proof of Proposition~\ref{prop:increaseNewton}. The edge $e\in \Gamma^\prime$ is from $(1,\delta)$ to $v$ and has slope~$-m$. The Newton polygons $\Gamma^\prime$ and $\Gamma(g)$ agree to the right of $(i_0, j_0)$. To the left of $(i_0, j_0)$, $\Gamma^\prime$ consists of the dashed edges from $(0, N^\prime)$ to $(1, \delta)$ to $(i_0, j_0)$ and
$\Gamma(g)$ consists of the solid edges from $(0, N^\prime)$ to~$w$ to $(i_0, j_0)$.
\label{fig:increasenewton}}
\end{figure}

Further, to the left of $(i_0, j_0)$, the edges of $\Gamma(g)$ have slope less than~$-m$. This follows since the points $(i,j)$ in $\supp g$ with $i<i_0$ lie strictly above the line~$\ell$ with slope~$-m$ that connects $(0, \delta+m$) and $(i_0, j_0)$. Hence in the half plane $i\leq i_0$, $\Gamma(g)$ consists of a path of line segments between integral vertices, starting at $(0, N^\prime)$ and ending at $(i_0, j_0)$, all with slope less than~$-m$. Thus, in the half plane $i<i_0$, $\Gamma(g)$ lies strictly above~$\ell$. Since $(1,\delta)\in \ell$, this means $(1,\delta)\not\in\Gamma_+(g)$. 

Note that $\Gamma^\prime$ contains the edges connecting the vertices $(0, N^\prime)$ to $(1,\delta)$ to $(i_0, j_0)$. As $(1, \delta)\not\in\Gamma_+(g)$, $\Gamma(g)$ lies strictly above $\Gamma^\prime$ on the interval $0<i<i_0$ and we have $\Gamma_+(g)\subsetneq \Gamma^\prime_+$. Hence $\Gamma'_{-} \subsetneq \Gamma_{-}(g)$, as needed to complete the proof.
\end{proof}

\begin{example}\label{ex:running3}
Let $k$ be a field of characteristic~$0$, and let $f = (x+y^{2})^{2} + xy^{3}$. In Example~\ref{ex:running}, we found $\mu_0^{\A1}(f)=2\hyp$. This is consistent with Example~\ref{ex:running2}, which shows that $f$ is degenerate on the edge from $(0,4)$ to $(2,0)$. The Newton number $\nuf$ is $2(4)-4-2+1=3$, meaning there is a strict inequality $\mu_0(f)>\nuf$.

We now show how to transform $f$ into a polynomial~$g$ that is Newton nondegenerate and has the same $\A1$-Milnor number. Letting $\Phi\colon\A2_k\to\A2_k$ be defined by $\Phi(x,y)=(x-y^2,y)$, we have $g=f\circ\Phi = x^{2} + xy^{3} - y^5$. The Newton diagram $\Npoly(g)$ consists of the edge from $(2,0)$ to $(0,5)$, which is convenient and Newton nondegenerate by Lemma~\ref{lemma:onboundary}(\ref{lemma:onboundary-a}). We have $\nug = 10 - 2 - 5 + 1 = 4=\mu_0( g)$, consistent with Theorem~\ref{thm:kushnirenko}. Furthermore, $\mu_0^{\A1}(f)=\mu_0^{\A1}(g)$ by Theorem~\ref{thm:rightequivalence} since $\Phi$ is invertible with $\Phi^{-1}(x,y)=(x+y^2, y)$.
\end{example}

\subsection{Restriction to Newton nondegenerate case for rank at most \texorpdfstring{$8$}{8}}

In order to prove Theorem~\ref{thm:increaseNewton}, which shows that it suffices to restrict attention to Newton nondegenerate polynomials in finding all Grothendieck--Witt classes of rank at most~$8$ that arise as $\A1$-Milnor numbers of two-variable polynomials, we need some preliminary results about polynomials. We begin with the following lemma.

\begin{lemma}\label{lemma:g_irred}
Let $k$ be a field, let $a\in k^\times$, let $\ell,m\in \Z_{>0}$ with $\gcd(\ell,m)=1$, and let $g=x^\ell-ay^m$.
\begin{enumerate}[label=\textup{(\alph*)},ref=\textup{\alph*}]
\item The polynomial~$g$ is irreducible in $k[x]$. \label{part:irreducible}
\item If $g(x_0, y_0)=0$ for some $(x_0, y_0)\in \bigl(\overline{k}^\times\bigr)^2$, then the zero set of~$g$ in~$\overline{k}^2$ is 
$\{(\lambda^{m} x_0, \lambda^{\ell}y_0)\mid \lambda \in \overline k\}$.\label{part:allzeros}
\end{enumerate}
\end{lemma}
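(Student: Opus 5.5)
For part~(\ref{part:irreducible}) I read the intended ring as $k[x,y]$. I would prove irreducibility with Newton polytopes and Ostrowski's theorem, which says that the Newton polytope of a product is the Minkowski sum of the Newton polytopes of the factors. The Newton polytope of $g=x^\ell-ay^m$ is the segment $S$ from $(\ell,0)$ to $(0,m)$. Since $\gcd(\ell,m)=1$, the only lattice points of $S$ are its two endpoints.

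Suppose $g=pq$ with $p,q\in k[x,y]$. Then $\mathrm{Newt}(p)+\mathrm{Newt}(q)=S$. A Minkowski sum of two lattice polytopes is a segment only if both summands are lattice points or lattice segments parallel to $S$. Their lattice lengths add, and the primitive lattice length of $S$ is $1$. Hence one of them, say $\mathrm{Newt}(p)$, is a single lattice point, so $p$ is a monomial $cx^iy^j$.

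A monomial dividing $g$ must divide every term of $g$, and $x^\ell$ and $y^m$ have no common nonconstant monomial divisor. So $i=j=0$ and $p$ is a unit, which gives irreducibility. If one prefers to avoid Ostrowski, the same conclusion should follow from weighted homogeneity: with weights $\deg x=m$ and $\deg y=\ell$, any factorization of $g$ can be taken to be into weighted homogeneous factors, and the gcd condition rules out nontrivial ones. I expect this step, making the factorization argument clean, to be the only real obstacle; the Newton polytope route is the one I would try first.

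For part~(\ref{part:allzeros}), the inclusion $\supseteq$ is immediate:
\[
(\lambda^m x_0)^\ell-a(\lambda^\ell y_0)^m=\lambda^{\ell m}\bigl(x_0^\ell-ay_0^m\bigr)=0 .
\]
For $\subseteq$, let $(x_1,y_1)$ be a zero of $g$ in $\overline{k}^2$. If $x_1=0$, then $ay_1^m=0$, so $y_1=0$, and $\lambda=0$ works. Otherwise $x_1,y_1\neq0$. Put $s=x_1/x_0$ and $t=y_1/y_0$. Comparing $x_1^\ell=ay_1^m$ with $x_0^\ell=ay_0^m$ gives $s^\ell=t^m$.

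Choose integers $p,q$ with $pm+q\ell=1$, which is possible since $\gcd(\ell,m)=1$, and set $\lambda=s^pt^q$ (allowed since $s,t\neq0$). Using $s^\ell=t^m$,
\[
\lambda^m=s^{pm}t^{qm}=s^{pm}s^{q\ell}=s,\qquad
\lambda^\ell=s^{p\ell}t^{q\ell}=t^{pm}t^{q\ell}=t .
\]
Therefore $(x_1,y_1)=(\lambda^mx_0,\lambda^\ell y_0)$, completing part~(\ref{part:allzeros}).
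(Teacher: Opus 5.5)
Your proof is correct, and you are right to read $k[x]$ as $k[x,y]$.

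\textbf{Part (a).} Your main route differs from the paper's. The paper uses exactly the weighted-homogeneity argument you sketch as a fallback. With weights $m$ for $x$ and $\ell$ for $y$, any factor $h_1$ of $g$ of positive degree is again quasi-homogeneous. It contains pure powers $x^{\ell_1}$ and $y^{m_1}$ with $1\le\ell_1\le\ell$ and $1\le m_1\le m$. The weight equation $m\ell_1=\ell m_1$ together with $\gcd(\ell,m)=1$ forces $\ell_1=\ell$ and $m_1=m$, so the cofactor has weighted degree $0$ and is constant.

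Your Ostrowski argument replaces the standard fact that factors of a quasi-homogeneous polynomial are quasi-homogeneous (which the paper uses without comment) by the Minkowski-sum property of Newton polytopes. The lattice-length count then finishes in one line. It is also more general: it shows irreducibility of any polynomial whose Newton polytope is a primitive lattice segment. The paper's route is more elementary and stays inside the quasi-homogeneous framework used in Proposition~\ref{prop:resultantfactor}. Both arguments work over any field, in particular over $\overline{k}$, which is the case Proposition~\ref{prop:resultantfactor} actually needs, since there $a\in\overline{k}^\times$.

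\textbf{Part (b).} You follow the same idea as the paper: compare $s=x_1/x_0$ and $t=y_1/y_0$ via $s^\ell=t^m$. The paper simply writes $\lambda=(x_1/x_0)^{1/m}=(y_1/y_0)^{1/\ell}$ without explaining why an $m$-th root of $s$ and an $\ell$-th root of $t$ can be chosen to coincide. Your B\'ezout choice $\lambda=s^pt^q$ with $pm+q\ell=1$ supplies exactly that missing justification. Your separate treatment of the zero $(0,0)$ is a further small gain in rigor.
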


\begin{proof}
We first show~(\ref{part:irreducible}). Suppose $g$ factors as the product of polynomials $h_1$ and~$h_2$. Without loss of generality, assume $h_1$ has positive degree. Since $g$ is quasi-homogeneous with weights $m$ and~$\ell$, the polynomial~$h_1$ must also be quasi-homogeneous with the same weights, Then $h_1$ must contain monomials $x^{\ell_1}$ and $y^{m_1}$ with nonzero coefficients, where $1\leq \ell_1\leq \ell$ and $1\leq m_1\leq m$. As $h_1$ is quasi-homogeneous with weights $m$ and $\ell$, we have $m \ell_1=\ell m_1$. As $\gcd(\ell, m)=1$, this implies $\ell\mid \ell_1$ and $m\mid m_1$, so $\ell=\ell_1$ and $m=m_1$. Therefore the degree of $h_2$ is~$0$, so $g$ is irreducible. 

We next show~(\ref{part:allzeros}). Since $g(x_0,y_0)=0$, we have $g(\lambda^mx_0,\lambda^\ell y_0)=\lambda^{\ell m}g(x_0,y_0)=0$ for all $\lambda\in \overline{k}$. If $g(x_0,y_0)=0$ and $g(x_1,y_1)=0$, then $x_0^\ell=ay_0^m$ and $x_1^\ell=ay_1^m$, so $(x_1/x_0)^{\ell} = (y_1/y_0)^m$. Letting $\lambda =(x_1/x_0)^{1/m }= (y_1/y_0)^{1/\ell}$, we have $(x_1, y_1) = (\lambda^{m} x_0, \lambda^{\ell} y_0)$. 
\end{proof}

We next provide a proof of the following proposition as we found no specific reference in the literature. 

\begin{proposition}\label{prop:resultantfactor}
Let $k$ be a field, and let $f\in k[x,y]$ be a quasi-homogeneous polynomial with positive weights $w_x$ and~$w_y$ for $x$ and~$y$ and weighted degree $d$. Suppose $d\neq0$ in $k$. If $f_x$ and~$f_y$ have a common zero in $\bigl(\overline{k}^{\times}\bigr)^2$, then $f$ has $(x^\ell-ay^m)^2$ as a factor in $\overline k[x,y]$ for some positive integers $\ell$ and~$m$ and some $a\in \overline{k}^\times$.
\end{proposition}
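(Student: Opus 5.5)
The plan is to factor $f$ over $\overline k$ into quasi-homogeneous irreducible pieces and then show that a common zero of $f_x$ and $f_y$ in the torus forces one irreducible torus factor to appear with multiplicity at least two. Write $w_x,w_y$ and, dividing by $\gcd(w_x,w_y)$ (this rescales $d$ by an integer factor, so we keep track that the original $d\neq0$ in $k$), set $\ell=w_y/g$, $m=w_x/g$ with $g=\gcd(w_x,w_y)$, so $\gcd(\ell,m)=1$ and $x^\ell$, $y^m$ have the same weighted degree. A quasi-homogeneous $f$ then has the form $f=x^{\alpha}y^{\beta}P(x^\ell,y^m)$, where $P(X,Y)$ is an ordinary homogeneous polynomial in two variables with $P(X,0)P(0,Y)\neq0$. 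Over $\overline k$, $P$ splits into linear factors, giving
\[f=c\,x^{\alpha}y^{\beta}\prod_{i}(x^\ell-a_iy^m)^{e_i},\qquad a_i\in\overline k^{\times}\ \text{distinct}.\]

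Next, use the Euler identity $w_xxf_x+w_yyf_y=d\,f$. If $(x_0,y_0)\in(\overline k^\times)^2$ is a common zero of $f_x,f_y$, then $d\,f(x_0,y_0)=0$, and since $d\neq0$ in $k$ we get $f(x_0,y_0)=0$. (Here we need the original $d$, which is why the gcd normalization is only notational.) Because $x_0y_0\neq0$, some factor $x^\ell-a_iy^m$ vanishes at $(x_0,y_0)$.

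Now suppose, aiming for a contradiction, that $e_i=1$. Write $f=(x^\ell-a_iy^m)u$ with $u(x_0,y_0)\neq0$; this holds because the other factors $x^\ell-a_jy^m$ for $j\neq i$ cannot vanish there, since $x_0^\ell=a_iy_0^m$ and $a_j\neq a_i$, and the monomial factor is nonzero on the torus. Then
\[f_x(x_0,y_0)=\ell x_0^{\ell-1}u(x_0,y_0),\]
and this is nonzero provided $\ell\neq0$ in $k$. Symmetrically, $f_y(x_0,y_0)=-ma_iy_0^{m-1}u(x_0,y_0)$ is nonzero provided $m\neq0$ in $k$. So we get a contradiction unless $\charac k$ divides both $\ell$ and $m$, which is impossible since $\gcd(\ell,m)=1$. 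Hence $e_i\geq2$, and $(x^\ell-a_iy^m)^2$ divides $f$ in $\overline k[x,y]$, as required.

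The main obstacle I expect is making the factorization step rigorous: justifying the normal form $x^\alpha y^\beta P(x^\ell,y^m)$ from quasi-homogeneity, which means checking that exponents on a weighted-degree line differ by multiples of $(\ell,-m)$, and confirming that $d\neq0$ is used only through the Euler identity. Lemma~\ref{lemma:g_irred} supports the factorization, since it shows each $x^\ell-a_iy^m$ is irreducible and describes its zeros, so the splitting above is the complete torus factorization.
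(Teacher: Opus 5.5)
Your proof is correct, and it reaches the conclusion by a different route from the paper's.

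\textbf{The paper's route.} The paper never writes down a factorization of $f$.
\begin{enumerate}
\item It uses quasi-homogeneity to see that $f$, $f_x$, $f_y$ vanish along the orbit $(\lambda^{w_x}x_0,\lambda^{w_y}y_0)$.
\item It invokes Lemma~\ref{lemma:g_irred}, which gives irreducibility of $g=x^\ell-ay^m$ with $a=x_0^\ell/y_0^m$ and describes its zero set. Combined implicitly with the Nullstellensatz, this gives $g\mid f$, $g\mid f_x$, $g\mid f_y$.
\item It writes $f=g^qh$ with $\gcd(g,h)=1$ and rules out $q=1$ by divisibility: $g\mid g_xh$ would force $g\mid g_x$, which is impossible when $\ell\neq0$ in $k$.
\end{enumerate}

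\textbf{Your route.} You reduce to a binary form. Exponents on the weighted line differ by multiples of $(\ell,-m)$, so $f=x^\alpha y^\beta P(x^\ell,y^m)$. Splitting $P$ over $\overline k$ gives the complete factorization. You then rule out multiplicity one by evaluating $f_x$ or $f_y$ at the point.

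\textbf{Comparison.} Your version is more elementary and self-contained. It needs neither irreducibility of $x^\ell-ay^m$, nor the description of its zero set, nor the Nullstellensatz. It also names the repeated factor explicitly as the one through $(x_0,y_0)$. Your citation of Lemma~\ref{lemma:g_irred} is in fact superfluous: you only use that the $a_i$ are distinct and nonzero, which is what gives $u(x_0,y_0)\neq0$. The paper's version avoids proving the normal form $x^\alpha y^\beta P(x^\ell,y^m)$ and reuses a lemma it has already established.

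Both arguments use $d\neq0$ only through Euler's identity. Both use $\gcd(\ell,m)=1$ only to guarantee that $\ell$ or $m$ is nonzero in $k$.
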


\begin{proof}
Let $(x_0,y_0)\in \bigl(\overline{k}^{\times}\bigr)^2$ be a common zero of $f_x$ and~$f_y$. By Euler's theorem for quasi-homogeneous polynomials \cite[Section~I.5, Exercise~3]{Kunz}, $d \cdot f= w_x x f_{x} + w_y y f_{y}$. Since $d\neq0$ in~$k$, the vanishing of $f_x$ and~$f_y$ at $(x_0, y_0)$ implies that $f$ vanishes at $(x_0,y_0)$ as well. By definition of quasi-homogeneous polynomial, we have
\[f(\lambda^{w_x} x, \lambda^{w_y}y)=\lambda^d f(x, y).\]
Similarly, since $f_x$ (resp.~$f_y$) is a quasi-homogeneous polynomial with the same weights $w_x$ and~$w_y$ and degree $d-w_x$ (resp.\ $d-w_y$), we have
\[\begin{aligned}
f_x(\lambda^{w_x} x, \lambda^{w_y}y)&=\lambda^{d-w_x} f_x(x, y)\\
f_y(\lambda^{w_x} x, \lambda^{w_y}y)&=\lambda^{d-w_y} f_x(x, y).
\end{aligned}\]
Therefore $f$, $f_x$, and~$f_y$ vanish on the parametric curve $(\lambda^{w_x} x_0, \lambda^{w_y}y_0)$ for all $\lambda\in \overline{k}$.

Let $\ell=w_y/\gcd(w_x, w_y)$ and $m=w_x/\gcd(w_x, w_y)$. Note that $w_x$ and~$w_y$ cannot both be~$0$ in~$k$ as otherwise $d$ would be~$0$ in~$k$, so therefore either $\ell$ or~$m$ is nonzero in~$k$. Let $g\coloneqq x^{\ell}-a y^m$, where $a={x_0^\ell}/{y_0^m}$. By construction, $g(x_0,y_0)=0$. By Lemma~\ref{lemma:g_irred}, $g$ is an irreducible polynomial and $f$, $f_x$, and~$f_y$ vanish on the zero set of~$g$ in~$\overline{k}^2$. This implies that $g$ divides $f$, $f_x$, and~$f_y$ as polynomials in $\overline k[x,y]$. Therefore $f=g^q h$ for some $q\geq 1$ and $h\in \overline k[x,y]$ with $\gcd(g,h)=1$. 

It remains to show that $q\geq 2$. Suppose $q=1$. Since $d\neq 0$ in~$k$, one of $\ell$ and~$m$ must be nonzero in~$k$. Suppose $\ell\neq0$ in~$k$. As $f_x=gh_x + g_xh$ and $g\mid f_x$, we have that $g\mid g_x h$. As $\overline k[x,y]$ is a unique factorization domain and $g$ and~$h$ are relatively prime, this means $g\mid g_x$. However, since $\ell\neq 0$, the polynomial $g_x=\ell x^{\ell-1}$ is nonzero with $\deg g_x <\deg g$, so this is impossible. A similar argument with~$f_y$ shows that we cannot have $q=1$ when $m\neq0$. Therefore $q\geq2$.
\end{proof}

In the case of interest to us, the factor $(x - ay^m)^2$ turns out to be in $k[x,y]$.

\begin{corollary}\label{cor:factorization}
Let $k$ be a field of characteristic not $2$ or~$3$, and let $f\in k[x,y]$ be a quasi-homogeneous polynomial with positive weights $w_x$ and~$w_y$ for $x$ and~$y$ and weighted degree~$d$. Let $d_x$ be the degree of $f(x,1)$ considered as a polynomial in~$x$. Assume that $f_x$ and~$f_y$ have a common zero in $\bigl(\overline k^{\times}\bigr)^2$. If $d\neq 0$ in $k$ and $d_x\leq 3$, then $f$ has $(x - ay^m)^2$ as a factor in $k[x,y]$ for some $a\in k^\times$ and some positive integer~$m$.
\end{corollary}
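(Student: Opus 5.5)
Apply Proposition~\ref{prop:resultantfactor} first. Since $d\neq0$ in~$k$ and $f_x,f_y$ have a common zero in $\bigl(\overline k^{\times}\bigr)^2$, it gives $f=g^2h$ in $\overline k[x,y]$ with $g=x^\ell-a y^m$, $a\in\overline k^\times$, and $\ell=w_y/\gcd(w_x,w_y)$, $m=w_x/\gcd(w_x,w_y)$ coprime. The remaining tasks are to show $\ell=1$ and $a\in k$.

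For $\ell=1$, set $y=1$. Then $g(x,1)^2=(x^\ell-a)^2$ divides $f(x,1)$, which has degree $d_x\le 3$. Since $\deg(x^\ell-a)^2=2\ell$ and this is a nonzero polynomial, we get $2\ell\le 3$, so $\ell=1$. We should first check that $f(x,1)\neq0$. This holds because $f\neq 0$ (it has a nontrivial factorization), and a nonzero quasi-homogeneous polynomial cannot vanish at $y=1$: distinct monomials of the same weighted degree have distinct $x$-exponents. So $g=x-ay^m$ and $f(x,1)=(x-a)^2 q(x)$ with $\deg q\le 1$.

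For $a\in k$, I would split on $d_x$, using $f(x,1)\in k[x]$. If $d_x=2$, then $f(x,1)=c(x-a)^2$ with $c\in k^\times$, and the $x$-coefficient $-2ca\in k$ gives $a\in k$ because $\charac k\neq2$. If $d_x=3$, then $f(x,1)=c(x-a)^2(x-b)$. The root $a$ is a repeated root, so it is a common root of $f(x,1)$ and its derivative. Computing $\gcd(f(x,1),f'(x,1))$ over~$k$, where $f'$ is nonzero and has degree~$2$ since $3\neq0$, gives a polynomial in $k[x]$. If $a\neq b$ this gcd is $x-a$, so $a\in k$. If $a=b$, then $f(x,1)=c(x-a)^3$ and the $x^2$-coefficient $-3ca$ lies in $k$, so $a\in k$ because $\charac k\neq3$. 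The case $d_x\le1$ cannot occur, since $(x-a)^2$ divides $f(x,1)$.

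Finally, $x-ay^m\in k[x,y]$ and it divides $f$ in $\overline k[x,y]$. Dividing by the monic-in-$x$ polynomial $(x-ay^m)^2\in k[x,y]$ is compatible with field extension, so the quotient lies in $k[x,y]$ and the factorization holds over $k$. The only delicate point is the Galois-descent step for $a$ in degree~$3$, and that is exactly where $\charac k\neq 2,3$ is used.
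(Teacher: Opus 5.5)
Your proof is correct and follows the paper's outline: apply Proposition~\ref{prop:resultantfactor}, set $y=1$, and use the bound $2\ell\le d_x\le 3$ to get $\ell=1$. Where you differ is in showing $a\in k$.

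The paper argues by contradiction. If $a\notin k$, then $(x-a)^2\notin k[x]$ because $\charac k\neq 2$. Since $\deg f(x,1)\le 3$, this forces $f(x,1)$ to be an irreducible cubic over $k$, hence a multiple of the minimal polynomial of $a$. The repeated root $a$ then makes $f(x,1)$ divide its own derivative, so the derivative is $0$, which is impossible since $\charac k\neq 3$. In short, it uses that irreducible cubics are separable outside characteristic $3$.

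You instead do a direct case analysis:
\begin{itemize}
\item $d_x=2$: read $a$ off the $x$-coefficient.
\item $d_x=3$ with $a\neq b$: $\gcd(f(x,1),f'(x,1))=x-a$. This holds because $f'(x,1)=c(x-a)(3x-a-2b)$ with $3a-a-2b=2(a-b)\neq0$ and $f'(b)=c(b-a)^2\neq 0$.
\item $d_x=3$ with $a=b$: read $a$ off the $x^2$-coefficient.
\end{itemize}

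The paper's route is shorter and more conceptual. Yours is more explicit, and it also spells out two points the paper leaves tacit. First, $f(x,1)\neq 0$, which the degree bound needs. Second, divisibility by the monic-in-$x$ polynomial $(x-ay^m)^2$ descends from $\overline{k}[x,y]$ to $k[x,y]$.
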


\begin{proof}
By Proposition~\ref{prop:resultantfactor}, $f$ has $(x^{\ell} - ay^m)^2$ as a factor in $\overline k[x,y]$ for some $a\in \overline{k}^\times$ and some $\ell,m\in\mathbb{Z}_{>0}$. Let $g=f(x,1)\in k[x]$, which then has $(x^{\ell} - a)^2$ as a factor in $\overline{k}[x]$. The hypothesis $d_x\leq 3$ shows $\ell =1$, so $g$ has $(x-a)^2$ as a factor in~$\overline{k}[x]$. 

We now show that $a\in k$. If $a\notin k$, then, since $\charac k\neq 2$, we would have $(x-a)^2 \notin k[x]$. As $d_x\leq 3$, this would mean $g$ is an irreducible cubic in $k[x]$ and hence a scalar multiple of the minimal polynomial of~$a$. Since $a$ is a repeated root of~$g$, it is also a root of~$g_x$, so $g_x$ would be divisible by~$g$. Since $\deg g_x<\deg g$, we would then have $g_x=0$. But this is impossible as~$g_x$ is a nonzero quadratic polynomial since $\charac k\neq 3$. Hence $a\in k$, as needed. 
\end{proof}

The final lemma of this section shows that, once we have made a polynomial Newton degenerate on each edge of its Newton diagram, we can also make it Newton nondegenerate on the vertices of the Newton diagram.

\begin{lemma}\label{lemma:pointdegeneracy}
Let $k$ be a field of characteristic not $2$, $3$, $5$, or~$7$. If $f\in k[x,y]$ with $\mu_0(f)\leq8$ that is convenient and Newton nondegenerate on each edge of $\Gamma(f)$, then there is a convenient polynomial $g\in k[x,y]$ that is Newton nondegenerate and for which $\mu_0^{\A1}(g)=\mu_0^{\A1}(f)$. 
\end{lemma}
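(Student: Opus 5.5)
Since $f$ is Newton nondegenerate on every edge, the only remaining failures of nondegeneracy are at vertices of $\Npoly(f)$. By Lemma~\ref{lemma:atvertex}, a vertex $(i,j)$ is degenerate exactly when $i\equiv j\equiv 0$ in~$k$. The plan is to show that, under the characteristic hypothesis and the bound $\mu_0(f)\le 8$, no vertex can be degenerate except possibly vertices on the axes. We then remove those by adding a pure power of~$x$ or~$y$ that is allowed by Lemma~\ref{lemma:ne}.

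First, Theorem~\ref{thm:kushnirenko} gives $\nu(f)\le\mu_0(f)\le 8$, so $\Npoly(f)$ is one of the diagrams catalogued in Propositions \ref{prop:newton-one-edges}, \ref{prop:newton-two-edges}, and~\ref{prop:newton-three-edges}. For every vertex $(i,j)$ of these diagrams that is not on an axis, we have $\min(i,j)\le 4$ and $\min(i,j)\ge1$. The interior vertices listed there are $(1,1)$, $(2,1)$, $(3,1)$, $(4,1)$, $(1,2)$, $(1,3)$, $(1,4)$, and their symmetric counterparts. Each has a coordinate equal to~$1$, which is nonzero in~$k$, so Lemma~\ref{lemma:atvertex} shows that $f$ is nondegenerate at every interior vertex.

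Next, consider the vertices on the axes, $(M,0)$ and $(0,N)$. The vertex $(M,0)$ is degenerate only if $\charac k=p>7$ divides~$M$, and similarly for~$N$. The table entries allow some axis coordinates to be large ($M\ge 3$ unbounded, for instance). However, $\nu(f)$ does not depend on $M$ in those cases, and $\mu_0(f)\le 8$. So I will apply Lemma~\ref{lemma:ne} with $r=\mu_0(f)\le 8$: every term of total degree at least $r+2$ can be altered freely without changing $\mu_0^{\A1}$ or $Q_0(\nabla f)$.

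If $p\mid M$, then $M\ge p\ge 11>r+1$, so $x^M$ has degree at least $r+2$. I would define $g$ by deleting from $f$ all monomials of total degree at least $r+2$ and adding $x^{M'}+y^{N'}$ with $M',N'\ge r+2$ chosen not divisible by~$p$ (for example, $M'=N'=r+2\le 10<p$ when $p>0$). Then $\mu_0^{\A1}(g)=\mu_0^{\A1}(f)$ and $g$ is convenient.

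The main obstacle is to check that this truncation does not create new degenerate edges or vertices. The removed monomials may have been vertices of $\Npoly(f)$, so $\Npoly(g)$ can differ from $\Npoly(f)$, and nondegeneracy of $f$ on edges does not transfer directly. To handle this, I would use that $\mu_0(g)=\mu_0(f)\le 8$, which gives $\nu(g)\le 8$ again. So $\Npoly(g)$ is still in the catalogue, and its interior vertices are nondegenerate by the argument above. Its axis vertices now have exponents less than~$p$, or at least not divisible by~$p$.

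For the edges of $\Npoly(g)$, the ones touching an axis are handled by Lemma~\ref{lemma:onboundary}(\ref{lemma:onboundary-a}) when they have no interior lattice point. The remaining candidates are exactly those listed in Proposition~\ref{prop:degenerateedge}, and all of their endpoints have degree at most $9$. I expect to argue that such an edge of $\Npoly(g)$ already occurred as an edge of $\Npoly(f)$ with the same terms, because all of its lattice points have total degree at most $r+1$ and so were untouched by the truncation. On that edge, nondegeneracy of $f$ transfers to~$g$. Making this last compatibility precise, that the short edges of $\Npoly(g)$ coincide with edges of $\Npoly(f)$, is the step I would check case by case against the catalogue.
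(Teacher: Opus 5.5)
Your treatment of the interior vertices (each has a coordinate equal to $1$, so Lemma~\ref{lemma:atvertex} applies) matches the paper. Your repair of the axis vertices is genuinely different. The paper changes only the offending monomial. If $p\mid M$, it replaces $x^M$ by $x^{M'}$ with $M'\ge \mu_0(f)+2$ and $p\nmid M'$, which Lemma~\ref{lemma:ne} allows. It then reads off from the catalogue that the edge ending at $(M,0)$ starts at some $(\gamma,1)$, and checks only the new edge from $(\gamma,1)$ to $(M',0)$, which has no interior lattice point, via Lemma~\ref{lemma:onboundary}(\ref{lemma:onboundary-a}). All other edges of $\Gamma(f)$ are untouched, so nothing else needs rechecking. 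Your global truncation (delete every monomial of degree $\ge r+2$, add $x^{r+2}+y^{r+2}$) is a cleaner, uniform construction. However, it is exactly what creates the obstacle you flag: you must re-verify every edge of a possibly different diagram.

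That step is the gap, and the reason you give does not close it. Suppose the lattice points of a candidate edge $e$ of $\Gamma(g)$ have degree $\le r+1$, so the coefficients there agree with those of $f$. This still does not show that $e$ is an edge of $\Gamma(f)$. A deleted monomial of $f$ of degree $\ge r+2$ could a priori lie on or below the line $L=\{\ell=d\}$ through $e$. Then $\Gamma(f)$ would look different there, and the edge hypothesis on $f$ would say nothing about $g_e$.

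What you need is that every lattice point of $\mathbb{Z}_{\ge 0}^2$ with $\ell\le d$ has total degree $\le r+1$. This does hold for each edge in Proposition~\ref{prop:degenerateedge}(\ref{part:degenerateedge}), because total degree on the region under $L$ is maximized at the axis intercepts of $L$:
\begin{enumerate}
\item For the one-edge diagrams, the intercepts are $(M,0)$ and $(0,N)$, with $\max(M,N)\le \nu(g)+1$.
\item For the edge from $(2,1)$ to $(0,N)$ with $N\in\{3,5,7\}$, the intercepts are $(0,N)$ and $(2+2/(N-1),0)$, and $\nu(g)=N+1$.
\end{enumerate}
In both cases the intercepts have degree at most $\nu(g)+1$. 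Since $\nu(g)\le\mu_0(g)=r$, that bound is $\le r+1$. So $f$ and $g$ have identical coefficients on $\{\ell\le d\}$, while $x^{r+2}$, $y^{r+2}$, and all deleted monomials lie in $\{\ell>d\}$. Hence $L$ cuts out the same face of $\Gamma_+(f)$ and $\Gamma_+(g)$, and $f_e=g_e$, so nondegeneracy transfers from $f$ to $g$. With this observation supplied, your argument is complete. Without it, the final step is only asserted.
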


\begin{proof}
When $\charac k=0$, Lemma~\ref{lemma:atvertex} gives that the vertices of $\Gamma(f)$ are Newton nondegenerate, so we can take $g=f$. In the case of positive characteristic, the vertices of~$f$ might be degenerate and we may need to modify $f$ to obtain~$g$. By hypothesis, assume $\charac k>7$. Since $\mu_0(f)\leq8$, we know from Theorem~\ref{thm:kushnirenko} that $\nuf\leq 8$. Therefore $\Gamma(f)$ must be one of the Newton diagrams found in Section~\ref{sec:classifynewton}. Since $\charac k>7$, Lemma~\ref{lemma:atvertex} shows that $f$ could only be Newton degenerate at vertices of the form $(M,0)$ or $(0,N)$ with $M,N\geq11$. Assume $f$ is Newton degenerate at $(M,0)$. The classification of Newton diagrams in Propositions \ref{prop:newton-one-edges}, \ref{prop:newton-two-edges}, and~\ref{prop:newton-three-edges} shows that $\Gamma(f)$ must have an edge with endpoints $(\gamma,1)$ and $(M,0)$. 

Since $M>\mu_0(f)+2$, Lemma~\ref{lemma:ne} shows that we can replace the monomial term~$x^M$ with~$x^{M'}$ where $M'\geq \mu_0(f)+2$ and $\charac k\nmid M^\prime$ without changing the $\A1$-Milnor number. As $M'\neq0$ in~$k$, the resulting polynomial is Newton nondegenerate at $(M',0)$. Moreover, Lemma~\ref{lemma:onboundary}(\ref{lemma:onboundary-a}) shows that $f$ is Newton nondegenerate on the new edge from $(\gamma,1)$ to $(M',0)$. If $f$ is degenerate at $(0,N)$, we modify~$N$ similarly to obtain a convenient, Newton nondegenerate polynomial~$g$ with $\mu_0^{\A1}(g)=\mu_0^{\A1}(f)$. 
\end{proof}

We are now ready to prove the main result of this section.

\begin{theorem}\label{thm:increaseNewton}
Let $k$ be a field of characteristic not $2$, $3$, $5$, or~$7$. Let $f$ be a convenient polynomial in two variables with $f(0)=0$ and $1\leq\mu_0(f)<\infty$ that is Newton degenerate on an edge~$e$ of $\Gamma(f)$.
\begin{enumerate}[label=\textup{(\alph*)},ref=\textup{\alph*}]
\item If $\nuf\leq8$, then there is a convenient polynomial~$g$ in two variables with $\mu^{\mathbb{A}^1}_0(g)=\mu^{\mathbb{A}^1}_0(f)$ and $\nug>\nuf$. \label{part:increaseNewtona}
\item If $\mu_0(f)\leq8$, then there is a Newton nondegenerate and convenient polynomial~$g$ in two variables with $\mu_0^{\A1}(g)=\mu_0^{\A1}(f)$. \label{part:increaseNewtonb}
\end{enumerate}
\end{theorem}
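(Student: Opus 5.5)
For part~(\ref{part:increaseNewtona}), the plan is to reduce to Proposition~\ref{prop:increaseNewton}. Since $\nuf\leq8$ and $f$ is Newton degenerate on~$e$, Proposition~\ref{prop:degenerateedge} lists the possible edges up to interchanging $x$ and~$y$. These are the one-edge diagrams from $(2,0)$ to $(0,2j)$ for $j\in\{1,2,3,4\}$, the one-edge diagram from $(3,0)$ to $(0,3)$, and the two-edge diagrams whose edge runs from $(2,1)$ to $(0,3)$, $(0,5)$, or $(0,7)$. After swapping variables if needed, each such edge contains a lattice point with first coordinate~$1$, namely $(1,\delta)$. The edge polynomial $f_e$ is quasi-homogeneous for the weights determined by the slope of~$e$. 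Degeneracy on~$e$ means $xf_{e,x}$ and $yf_{e,y}$ have a common zero in $(\overline k^\times)^2$, and hence so do $f_{e,x}$ and $f_{e,y}$.

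To apply Corollary~\ref{cor:factorization}, I need two checks. First, the $x$-degree of $f_e(x,1)$ is at most~$3$: it is $2$ or~$3$ in every case above. Second, the weighted degree $d$ is nonzero in~$k$. With primitive weights the weighted degree is small, for example $2j$ for the edge $(2,0)$--$(0,2j)$ with weights $(j,1)$, or $3$ or~$6$ in the other cases. Its prime factors lie in $\{2,3,5,7\}$, so the characteristic hypothesis gives $d\neq0$. The corollary then yields a factor $(x-ay^m)^2$ of $f_e$ in $k[x,y]$. Proposition~\ref{prop:increaseNewton} applies, since $e$ contains $(1,\delta)$, and gives a convenient $g$ with the same $\A1$-Milnor number and $\nug>\nuf$.

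For the symmetric cases where the degenerate edge is written with the roles of $x$ and~$y$ reversed, I would apply the same argument to $f(y,x)$. Swapping variables is a right equivalence by a linear invertible map, so Theorem~\ref{thm:rightequivalence} shows it does not change the $\A1$-Milnor number.

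For part~(\ref{part:increaseNewtonb}), I would iterate. By Theorem~\ref{thm:kushnirenko}, $\nuf\leq\mu_0(f)\leq8$. All polynomials produced along the way have the same $\A1$-Milnor number, hence the same Milnor number $\mu_0$ as its rank, so their Newton numbers stay bounded by~$8$. While the current polynomial is Newton degenerate on some edge, apply part~(\ref{part:increaseNewtona}) to strictly increase the Newton number. This can happen only finitely many times, so we end at a convenient polynomial that is Newton nondegenerate on every edge. Lemma~\ref{lemma:pointdegeneracy} then handles possible degeneracy at vertices and produces the desired~$g$.

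The main obstacle is the bookkeeping in part~(\ref{part:increaseNewtona}). For each edge listed in Proposition~\ref{prop:degenerateedge}, I must confirm three things in the oriented form actually used: $e$ passes through a point $(1,\delta)$, the $x$-degree bound $d_x\leq3$ holds, and the weighted degree avoids $2,3,5,7$. The $(3,0)$--$(0,3)$ edge needs particular care, because $d_x=3$ and the corollary's cubic argument is used in full there.
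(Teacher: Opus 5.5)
Your proposal is correct and follows the paper's proof essentially step for step. You reduce via Proposition~\ref{prop:degenerateedge} to the listed edges through a point $(1,\delta)$, obtain the factor $(x-ay^m)^2$ from Corollary~\ref{cor:factorization}, and invoke Proposition~\ref{prop:increaseNewton}; for part~(b) you iterate with Newton numbers bounded by $\mu_0(f)$ and finish with Lemma~\ref{lemma:pointdegeneracy}. One small slip: with primitive weights, the weighted degrees of the edges from $(3,0)$ to $(0,3)$ and from $(2,1)$ to $(0,3)$, $(0,5)$, $(0,7)$ are $3$, $3$, $5$, and $7$, not ``$3$ or $6$''. Your conclusion that every prime factor lies in $\{2,3,5,7\}$ still holds, so the argument is unaffected.
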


\begin{proof}
We first prove~(\ref{part:increaseNewtona}). Since $\nuf\leq8$ and $f$ is Newton degenerate on an edge~$e$ of $\Gamma(f)$, after interchanging variables if necessary $e$ must be one of the edges found in Proposition~\ref{prop:degenerateedge}(\ref{part:degenerateedge}). Choose relatively prime weights $w_x$ and~$w_y$ for the quasi-homogeneous polynomial~$f_e$. Examining the list of edges in Proposition~\ref{prop:degenerateedge}(\ref{part:degenerateedge}), we see that the only possible prime divisors of the quasi-homogeneous degree of $f_e$ are $2$, $3$, $5$, and~$7$. Moreover, by Proposition~\ref{prop:degenerateedge}(\ref{part:differences}), the change in the horizontal coordinate along~$e$ is at most~$3$. Corollary~\ref{cor:factorization} then gives that $(x-a y^m)^2$ is a factor of~$f_e$ for some $a\in k^\times$. Together with the fact that each edge in Proposition~\ref{prop:degenerateedge}(\ref{part:degenerateedge}) passes through a point $(1,\delta)$, this allows us to apply Proposition~\ref{prop:increaseNewton} to obtain a convenient polynomial~$g$ in two variables with $\mu^{\mathbb{A}^1}_0(g)=\mu^{\mathbb{A}^1}_0(f)$ and $\nug>\nuf$. 

We next prove~(\ref{part:increaseNewtonb}). Since $\mu_0(f)\leq 8$, by Theorem~\ref{thm:kushnirenko} we have $\nuf\leq 8$. By~(\ref{part:increaseNewtona}), there exists a convenient polynomial~$f_1$ in two variables with $\mu_0^{\A1}(f_1)=\mu_0^{\A1}(f)$ and $\nu(f_1)> \nuf$. If $f_1$ is Newton degenerate on an edge of $\Gamma(f_1)$, then we can repeat this process by applying part~(\ref{part:increaseNewtona}) to find a convenient polynomial~$f_2$ in two variables with $\mu_0^{\A1}(f_2)=\mu_0^{\A1}(f_1)$ and $\nu(f_2)> \nu(f_1)$. Since each step increases the Newton number by at least~$1$ and the Newton numbers are bounded above by $\mu_0(f)$ according to Theorem~\ref{thm:kushnirenko}, this process must terminate, meaning there is a sequence of convenient polynomials $f_1,f_2,\dots,f_q$ in two variables with $f_q$ Newton nondegenerate on each edge of $\Gamma(f_q)$ and $\mu_0^{\A1}(f_q)=\dots=\mu_0^{\A1}(f_1)=\mu_0^{\A1}(f)$. Applying Lemma~\ref{lemma:pointdegeneracy} to~$f_q$ gives a $g$ that is Newton nondegenerate on all edges and vertices of $\Gamma(g)$, showing~(\ref{part:increaseNewtonb}).
\end{proof}

\section{\texorpdfstring{$\A1$}{A\textonesuperior}-Milnor numbers of rank at most \texorpdfstring{$8$}{8} for two-variable polynomials}\label{sec:classifyA1Milnor} 

In this section, we determine the elements of the Grothendieck--Witt ring of rank at most~$8$ that are $\A1$-Milnor numbers of polynomials in two variables. Here we work over a field of characteristic not $2$, $3$, $5$, or~$7$. By Theorem~\ref{thm:allarise}(\ref{part:allariseeasyranks}), all local $\A1$-Brouwer degrees of ranks $1$, $2$, $3$, $5$, and~$7$ arise from a polynomial in two variables, so we only need to consider the cases when $\mu_0(f)$ is $4$, $6$, and~$8$. 

By Lemma~\ref{lemma:ne}, it suffices to consider polynomials that are convenient. By Theorem~\ref{thm:kushnirenko}, it suffices to consider polynomials whose Newton number is at most~$8$, i.e.\ polynomials whose Newton diagram is one of those found in Section~\ref{sec:classifynewton}. By Theorem~\ref{thm:increaseNewton}, it suffices to consider polynomials that are convenient and Newton nondegenerate. We then know from Theorem~\ref{thm:kushnirenko} that $\mu_0(f)=\nuf $, so it suffices to consider the Newton diagrams from Section~\ref{sec:classifynewton} where $\nuf$ is $4$, $6$, or~$8$. We divide these into four cases, to be addressed in the following subsections: 
\begin{enumerate}[label=\textup{(\roman*)},ref=\textup{\roman*}]
\item $\Npoly(f)$ has one edge connecting $(2,0)$ and $(0,r+1)$;
\item $\Npoly(f)$ has one edge connecting $(3,0)$ and $(0,r/2+1)$, and no intermediate points on this edge are in $\supp f$;
\item $\Npoly(f)$ has one edge connecting $(3,0)$ and $(0,r/2+1)$, and one or more intermediate points on this edge are in $\supp f$ (which only occurs for $r=4$); and
\item $\Npoly(f)$ has more than one edge.
\end{enumerate}

We compute the $\A1$-Milnor number of all convenient and Newton nondegenerate polynomials whose Newton diagram is of any of these forms. We first describe our general approach, then treat each of these cases separately.

\subsection{General approach}
Given a polynomial~$f$ in two variables, we compute the $\A1$-Milnor number $\mu_0^{\mathbb{A}^1}(f)$ as follows. We first recall a result from Quick, Strand and Wilson~\cite{QuickStrandWilson}, restated in our context, that we will use to find a finite spanning set for $Q_0(\nabla f)$. 

\begin{lemma}[{\cite[Lemma~2.8]{QuickStrandWilson}}]\label{lemma:monomialszero}
Let $k$ be a field of characteristic not~$2$, and let $r$ be a positive integer. If $f\colon\mathbb{A}_k^n\to\mathbb{A}_k^1$ is a polynomial map with $f(0)=0$ and $\mu_0(f)=r$, then $(x_1,\dots,x_n)^{r}\subseteq (f_{x_1},\dots,f_{x_n})$.
\end{lemma}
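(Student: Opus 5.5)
The plan is to work in the Artinian local ring $R\coloneqq k[x_1,\dots,x_n]_{\mathfrak m}$ with maximal ideal $\overline{\mathfrak m}=\mathfrak m R$, and to set $I=(f_{x_1},\dots,f_{x_n})R$. By hypothesis $\dim_k R/I=\mu_0(f)=r$. First I show $\overline{\mathfrak m}^{\,r}\subseteq I$ in $R$. The last step pulls this back to the polynomial ring, in the localized sense in which the statement is used (equivalently, containment of ideals in $k[x_1,\dots,x_n]_{\mathfrak m}$). Since every statement in the paper about $Q_0(\nabla f)$ is made in the localization, I read the containment as one of ideals in $R$.

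The key step is a descending-chain argument in the finite-dimensional local algebra $A\coloneqq R/I=Q_0(\nabla f)$. Since $r\geq1$, we have $f_{x_i}(0)=0$ for all $i$, so $I\subseteq\overline{\mathfrak m}$ and $A$ is a local $k$-algebra of dimension $r$ with maximal ideal $\mathfrak n=\overline{\mathfrak m}/I$. Consider the chain
\[A=\mathfrak n^0\supseteq\mathfrak n^1\supseteq\mathfrak n^2\supseteq\cdots.\]
If $\mathfrak n^j=\mathfrak n^{j+1}$ for some $j$, then Nakayama's lemma (with $\mathfrak n^j$ finitely generated, since $A$ is Noetherian) gives $\mathfrak n^j=0$. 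So the chain strictly decreases until it reaches $0$. Each strict step lowers the $k$-dimension by at least $1$, starting from $\dim_k A=r$, so at most $r$ strict steps occur. Hence $\mathfrak n^r=0$. Here $A/\mathfrak n=k$ because the origin is a $k$-rational point, so $\dim_k A/\mathfrak n=1$ and in fact $\mathfrak n^{r}=0$ already. Translating back, $\overline{\mathfrak m}^{\,r}\subseteq I$ in $R$, i.e.\ $(x_1,\dots,x_n)^r\subseteq(f_{x_1},\dots,f_{x_n})$ in the local ring.

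I expect the only real subtlety to be the interpretation of the containment: in the global polynomial ring it can fail, since $\nabla f$ may have other zeros away from the origin. So I will state explicitly that the inclusion holds in $k[x_1,\dots,x_n]_{(x_1,\dots,x_n)}$, matching how Lemma~\ref{lemma:ne} and the computations of $Q_0(\nabla f)$ use it. Note that $\charac k\neq2$ plays no role in the argument; it is inherited from the ambient setting. Alternatively, one could cite \cite[Lemma~2.8]{QuickStrandWilson} applied to the map $\nabla f$, whose local algebra has dimension $r$.
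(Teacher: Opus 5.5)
Your proof is correct. The paper does not prove this lemma itself; it simply invokes \cite[Lemma~2.8]{QuickStrandWilson} for the map $F=\nabla f$, which is the alternative you mention at the end. Your Nakayama argument on the chain $\mathfrak n^0\supseteq\mathfrak n^1\supseteq\cdots$ in $Q_0(\nabla f)$ is a self-contained proof of that general fact about isolated zeros. As you say, it uses neither that $F$ is a gradient nor that $\charac k\neq 2$.

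Your reading of the containment as one of ideals in $k[x_1,\dots,x_n]_{\mathfrak m}$ is the right one. It can fail in the polynomial ring: for $f=xy-xy^2$ one has $\mu_0(f)=1$, but $\nabla f$ also vanishes at $(0,1)$, so $(x,y)\not\subseteq(f_x,f_y)$ in $k[x,y]$. The local reading is also all the paper needs, since the lemma is only used to conclude that the monomials of degree less than $r$ span $Q_0(\nabla f)$.

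One minor remark: the observation that $A/\mathfrak n=k$ is redundant. Strict decrease of $\dim_k\mathfrak n^j$ starting from $\dim_k A=r$ already forces $\mathfrak n^r=0$.
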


If $\mu_0(f)=r$, Lemma~\ref{lemma:monomialszero} implies that $Q_0(\nabla f)$ is spanned by the monomials in the set
\[S\coloneqq\{x^iy^j\mid 0\leq i+j< r\}.\] 
By definition, $f_x$ and~$f_y$ are zero in $Q_0(\nabla f)$, so for all $\alpha\in S$ we have
\begin{equation}\label{eq:system} 
\alpha f_x =0\quad\text{and}\quad \alpha f_y=0.
\end{equation}
We regard these equations as a system of linear equations in $Q_0(\nabla f)$ in the monomials in~$S$ and with coefficients in~$k$. All other relations in $Q_0(\nabla f)$ are $k$-linear combinations of those in this system. Row reduction of the system~\eqref{eq:system} gives a basis of $Q_0(\nabla f)$ as a vector space over~$k$ and expressions of all $\alpha\in S$ in terms of this basis. We then use this information to compute $\mu^{\A1}_0(f)$, as illustrated in the following example. 

\begin{example}
This is a continuation of Example~\ref{ex:running3}. Let $g = x^{2} + xy^{3} - y^5$, which is clearly convenient. The Newton diagram $\Npoly(g)$ consists of the edge from $(2,0)$ to $(0,5)$, so Lemma~\ref{lemma:onboundary}(\ref{lemma:onboundary-a}) gives that $g$ is Newton nondegenerate. Since $\mu_0(g)=4$, we know that $Q_{0}(\nabla g)$ is spanned by monomials with degree at most~$3$, namely the $10$~monomials in
\[ S \coloneqq \{ 1, x, y, x^{2}, xy, y^{2}, x^{3}, x^{2}y, xy^{2}, y^{3} \}. \]
We know that $g_{x} = 2x+y^3$ and $g_y= 3xy^2-5y^4$ are both~$0$ in $Q_{0}(\nabla g)$. Similarly, $\alpha g_{x}=0$ and $\alpha g_{y} = 0$ for all $\alpha \in S$. The equation $xg_x=0$ gives $2x^2+xy^3=0$. Since $xy^3$ has degree~$4$, it is $0$ in $Q_{0}(\nabla g)$, so $x^2=0$. There is a system of $20$~equations in $10$~unknowns (the elements of~$S$). Solving this system by row reduction shows that $\{1, y, y^2, y^3\}$ is a basis of $Q_{0}(\nabla g)$. It also gives $x=-y^3/2$ and $x^2=xy=x^3=x^2y=xy^2=0$ in $Q_{0}(\nabla g)$. 

The Hessian determinant is $\det H_g= 12xy - 40y^{3} - 9y^{4}= -40y^3$ in $Q_{0}(\nabla g)$. We can define $\eta\colon Q_{0}(\nabla g)\to k$ by $\eta(y^{3}) = -1/10$ and $\eta(v)=0$ for the other basis elements~$v$ so that $\eta(\det H_g)=4$, which is the Milnor number as required. The Gram matrix is thus an antidiagonal matrix with $\eta(y^3)= -1/10$ along the antidiagonal. Proposition~\ref{prop:antidiagonal} then gives that $\mu_0^{\A1}(g)=2\hyp$, which agrees with Example~\ref{ex:running}.
\end{example}

\subsection{Case (i)}\label{subsec:casei}
Let $r\in\{4,6,8\}$, and suppose $\Npoly(f)$ consists of a single edge connecting $(2,0)$ and $(0,r+1)$. As $f$ is Newton nondegenerate, Theorem~\ref{thm:kushnirenko} gives 
\[\mu_0(f)=\nuf=2\biggl(\frac{2(r+1)}{2}\biggr)-2-(r+1)+1=r.\]
Using Lemma~\ref{lemma:ne}, we can therefore assume $f$ has the form
\[f=\sum_{\substack{i,j\\2\leq i+j\leq r+1}}a_{i,j}x^iy^j,\]
where 
\begin{itemize}
\item $a_{2,0}\neq0$, 
\item $a_{0,r+1}\neq 0$,
\item $a_{1,j}=0$ for $j< (r+1)/2$, and
\item $a_{0,j}=0$ for $j<r+1$.
\end{itemize}
Row reduction of the equations in~\eqref{eq:system} shows that
\begin{itemize}
\item $\{ 1, y, y^2, \dots, y^{r-1} \}$ is a basis of $Q_0(\nabla f)$; 
\item for all nonnegative integers~$j$, we have that $xy^j$ is a $k$-linear combination of $y^{r/2+j+1},\dots,y^{r-1}$; 
and
\item $\alpha=0$ in $Q_0(\nabla f)$ for all other $\alpha\in S$.
\end{itemize}
A direct calculation gives $\det H_f=2r(r+1)a_{2,0}a_{0,r+1}y^{r-1}$ in $Q_0(\nabla f)$. We can define $\eta\colon Q_0(\nabla f)\to k$ by $\eta(y^{r-1})=b$, where $b=1/(2(r+1)a_{2,0}a_{0,r+1})$, and $\eta(v)=0$ for all other basis elements~$v$. Then $\eta(\det H_f)=r$, the Milnor number. The Gram matrix is thus an antidiagonal matrix with $b$ in each antidiagonal position, so Proposition~\ref{prop:antidiagonal} gives $\mu_0^{\A1}(f)=\frac{r}{2}\hyp$.

\subsection{Case (ii)}\label{subsec:caseii}
Let $r\in\{4,6,8\}$, and suppose $\Npoly(f)$ consists of one edge connecting $(3,0)$ and $(0,r/2+1)$. As $f$ is Newton nondegenerate, Theorem~\ref{thm:kushnirenko} then gives 
\[\mu_0(f)=\nuf=2\biggl(\frac{3(r/2+1)}{2}\biggr)-3-(r/2+1)+1=r.\]
Using Lemma~\ref{lemma:ne}, we can then assume $f$ has the form 
\[f=\sum_{\substack{i,j\\3\leq i+j\leq r+1}}a_{i,j}x^iy^j,\]
where 
\begin{itemize}
\item $a_{3,0}\neq0$, 
\item $a_{0,r/2+1}\neq0$,
\item $a_{2,j}=0$ for $j\leq(r+2)/6$,
\item $a_{1,j}=0$ for $j\leq(r+2)/3$, and
\item $a_{0,j}=0$ for $j<(r+2)/2$.
\end{itemize}
Row reduction of the equations in~\eqref{eq:system} shows that
\begin{itemize}
\item $\{ 1, y, x, \dots, y^{r/2-1}, xy^{r/2-2}, xy^{r/2-1} \}$ is a basis of $Q_0(\nabla f)$, and
\item $x^iy^j=0$ in $Q_0(\nabla f)$ for all $x^iy^j\in S$ with $(r/2+1)i+3j\geq 2r-2$
and $(i,j)\neq(1,r/2-1)$.
\end{itemize}
A direct calculation gives $\det H_f=3r(r/2+1)a_{3,0}a_{0,r/2+1}xy^{r/2-1}$ in $Q_0(\nabla f)$. We can define $\eta\colon Q_0(\nabla f)\to k$ by $\eta(xy^{r/2-1})=b$, where $b=1/(3(r/2+1)a_{3,0}a_{0,r/2-1})$, and $\eta(v)=0$ for all other basis elements~$v$. Then $\eta(\det H_f)=r$, the Milnor number. The Gram matrix is thus an upper anti-triangular matrix
\[\begin{bmatrix}
* & \cdots & * & b\\
\vdots & \iddots & \iddots & 0\\
* & \iddots & \iddots & \vdots\\
b & 0 & \cdots & 0
\end{bmatrix},\]
with $b$ in each antidiagonal position, so Proposition~\ref{prop:antidiagonal} gives $\mu_0^{\A1}(f)=\frac{r}{2}\hyp$.

\subsection{Case (iii)}\label{subsec:caseiii}
Here we only need to consider the case when $r=4$, $\Npoly(f)$ consists of one edge connecting $(3,0)$ and $(0,3)$, and there is an intermediate point on this edge that is in $\supp f$. Since we are assuming that $f$ is Newton nondegenerate, Theorem~\ref{thm:kushnirenko} gives 
\[\mu_0(f)=\nuf=2\biggl(\frac{9}{2}\biggr)-3-3+1=4.\]
Using Lemma~\ref{lemma:ne}, we can therefore assume $f$ has the form 
\begin{equation}\label{eq:caseiiiassumptions}
f=\sum_{\substack{i,j\\3\leq i+j\leq 5}}a_{i,j}x^iy^j,
\end{equation}
where 
\begin{itemize}
\item $a_{3,0}\neq0$,
\item $a_{0,3}\neq0$, and
\item either $a_{2,1}\neq0$ or $a_{1,2}\neq0$.
\end{itemize}
Let $e$ be the edge of $\Gamma(f)$ from $(0,3)$ to $(3,0)$. Then
\[f_e= a_{3,0}x^3+ a_{2,1} x^2 y + a_{1,2} x y^2 + a_{0,3}y^3.\]
As we assume $f$ is Newton nondegenerate, $\frac{\partial f_e}{\partial x}$ and $\frac{\partial f_e}{\partial y}$ do not have a common zero in $\bigl(\overline{k}^{\times}\bigr)^2$. Moreover, they cannot have a nontrivial common zero with $x=0$ or $y=0$ since $a_{3,0}\neq0$ and $a_{0,3}\neq 0$. Therefore the resultant of the homogeneous polynomials $\frac{\partial f_e}{\partial x}$ and $\frac{\partial f_e}{\partial y}$ is nonzero (see \cite[Section~3.1]{CoxLittleOShea}). Let
\[\begin{split}
R&=\Res\biggl(\frac{\partial f_e}{\partial x}, \frac{\partial f_e}{\partial y}; x, y\biggr)\\
&=3(27a_{3,0}^2a_{0,3}^2 - 18a_{3,0}a_{2,1}a_{1,2}a_{0,3} + 4a_{2,1}^3a_{0,3} + 4a_{3,0}a_{1,2}^3 - a_{2,1}^2a_{1,2}^2).
\end{split}\]
Let $A=2a_{2,1}^2 - 6a_{3,0}a_{1,2}$, $B=2a_{1,2}^2 - 6a_{2,1}a_{0,3}$, and $C= 9a_{3,0}a_{0,3} - a_{2,1}a_{1,2}$. Algebraic manipulation gives
\[R=\begin{cases}
\bigl( a_{1,2}^2 A^2 - A^2 B - 2 a_{2,1}^2 AB +9 a_{3,0}^2B^2 \bigr)/\bigl(4 a_{2,1}^2\bigr) & \text{ if }a_{2,1}\neq 0,\\
\bigl( a_{2,1}^2 B^2 - B^2 A - 2 a_{1,2}^2 BA +9 a_{0,3}^2A^2 \bigr)/\bigl(4 a_{2,1}^2\bigr) & \text{ if }a_{1,2}\neq 0.
\end{cases}\]
Since either $a_{2,1}$ or $a_{1,2}$ is nonzero, we have that $A\neq0$ or $B\neq0$. By symmetry, we can assume $A\neq0$. 

Row reduction of the equations in~\eqref{eq:system} (using only the assumptions $A\neq0$, $R\neq 0$, and the conditions following~\eqref{eq:caseiiiassumptions} above) shows
\begin{itemize}
\item $\{ 1, x, y, y^2 \}$ is a basis of $Q_0(\nabla f)$, 
\item $x^2=\frac{B}{A}y^2$,
\item $xy=\frac{C}{A}y^2$, and
\item $\alpha=0$ in $Q_0(\nabla f)$ for all other $\alpha\in S$.
\end{itemize}
A direct calculation gives $\det H_f=\frac{4R}{A}y^2$ in $Q_0(\nabla f)$. We can define $\eta\colon Q_{0}(\nabla f)\to k$ by $\eta(y^2)=A/R$, and $\eta(v)=0$ for all other basis elements~$v$. Then $\eta(\det H_f)=4$, the Milnor number. Using the ordering $\{ 1, y, x, y^2\}$ of the basis, the corresponding Gram matrix is
\[\begin{bmatrix}
0 & 0 & 0 & A/R\\
0 & A/R & C/R & 0\\
0 & C/R & B/R & 0\\
A/R & 0 & 0 & 0
\end{bmatrix}.\]
Using Lemma~\ref{lemma:antidiagonal}, we have $\mu_0^{\A1}(f)=\hyp+ \beta_P$, where
\[P=\begin{bmatrix}
A/R & C/R\\
C/R & B/R
\end{bmatrix}.\]
As $\beta_P$ represents $A/R$ and $\det P=-1/R$, we have 
\[\beta_P\cong\biggl\langle\frac{\det P}{A/R}\biggr\rangle\cong\biggl\langle -\frac{1}{A}, \frac{A}{R} \biggr\rangle \cong \langle -A,AR \rangle.\] 
Therefore 
\[\mu_0^{\A1}(f)=\hyp+ \langle -A,AR \rangle.\]

\subsection{Case (iv)}\label{subsec:caseiv}
Let $r\in\{4,6,8\}$. Suppose $f$ is a Newton nondegenerate polynomial whose Newton diagram $\Npoly(f)$ has an edge~$e$ passing through $(2,1)$ and $(1,r/2)$ and has two or three edges. Since $f$ is Newton nondegenerate, Theorem~\ref{thm:kushnirenko} gives 
\[\mu_0(f)=\nuf=2\biggl(\frac{N}{2}+\frac{r/2-1}{2}+\frac{r/2-1}{2}+\frac{1}{2}+\frac{M}{2}\biggr)-M-N+1=r.\]
Using Lemma~\ref{lemma:ne}, we can then assume $f$ has the form 
\begin{equation}\label{eq:caseivassumptions}
f=\sum_{\substack{i,j\\3\leq i+j\leq r+1}}a_{i,j}x^iy^j,
\end{equation}
where 
\begin{itemize}
\item $a_{3,0}=0$ if $r=4$,
\item $a_{2,1}\neq0 $,
\item $a_{1,r/2}\neq0$ or $a_{0,r-1}\neq0$,
\item $a_{1,j}=0$ for $j<r/2$, and
\item $a_{0,j}=0$ for $j<r-1$.
\end{itemize}
If $a_{0,r-1}\neq0$, let $e$ be the edge from $(2,1)$ to $(0, r-1)$, and, if $a_{0,r-1}=0$, let $e$ be the edge from $(2,1)$ to $(1, r/2)$. In both cases, $f_e= a_{2,1} x^2 y + a_{1, r/2} x y^{r/2}+a_{0, r-1}y^{r-1}$. As we assume $f$ is Newton nondegenerate, $\frac{\partial f_e}{\partial x}$ and $\frac{\partial f_e}{\partial y}$ do not have a common zero in $\bigl(\overline{k}^{\times}\bigr)^2$. In particular, $\frac{\partial f_e}{\partial x} (x,1)$ and $\frac{\partial f_e}{\partial y}(x,1)$ do not have a common zero in~$\overline{k}^{\times}$. Moreover, since $a_{1,r/2}\neq0$ or $a_{0,r-1}\neq0$, we know $(0,1)$ is not a common zero of $\frac{\partial f_e}{\partial x}$ and $\frac{\partial f_e}{\partial y}$, so therefore $\frac{\partial f_e}{\partial x} (x,1)$ and $\frac{\partial f_e}{\partial y}(x,1)$ do not have a common zero in~$\overline{k}$. This means that the resultant (see \cite[Section~3.1]{CoxLittleOShea})
\[\Res\biggl(\frac{\partial f_e}{\partial x}(x,1), \frac{\partial f_e}{\partial y}(x,1); x\biggr)=(r-1)a_{2,1}(4a_{2,1}a_{0,r-1}-a_{1,r/2}^2)\]
is nonzero. In particular, this means that $R\coloneqq (r-1)(4a_{2,1}a_{0,r-1}-a_{1,r/2}^2)/2$ is nonzero. 

Row reduction of the equations in~\eqref{eq:system} (using only the assumption $R\neq 0$ and the conditions following~\eqref{eq:caseivassumptions} above) shows
\begin{itemize}
\item $\{ 1, x, y, y^2,\dots,y^{r-2} \}$ is a $k$-basis of $Q_0(\nabla f)$,
\item $x^2=\frac{1}{2a_{2,1}^2}\bigl(-2(r-1)a_{2,1}a_{0,r-1}+\frac{r}{2}a_{1,r/2}^2\bigr) y^{r-2}$,
\item $xy^j$ is a linear combination of $1, y, y^2,\dots,y^{r-2}$ for all $j$ satisfying $1\leq j\leq r/2-1$,
\item $xy^{r/2-1} =-\frac{a_{1,r/2}}{2a_{2,1}}y^{r-2}$
\item $\alpha=0$ in $Q_0(\nabla f)$ for all other $\alpha\in S$.
\end{itemize}
A direct calculation gives $\det H_f=rR y^{r-2}$ in $Q_0(\nabla f)$. We can define $\eta$ by $\eta(y^{r-2})= 1/R$ and $\eta(v)=0$ on all other basis elements~$v$. Then $\eta(\det H_f)=r$, the Milnor number. Using the ordering $\{ 1, y,\dots,y^{r/2-1}, x,y^{r/2},\dots,y^{r-2}\}$, the corresponding Gram matrix is the $r \times r$ block upper anti-triangular matrix
\[ \begin{bNiceArray}{ccc|c|ccc}
 & & & & & & 1/R\\
 & * & & * & & \iddots & \\
 & & & & 1/R & & \\
\hline
 & * & & P & & & \\
\hline
 & & 1/R & & & & \\
 & \iddots & & & & & \\
1/R & & & & & & \\
\end{bNiceArray},\]
with 
\[P=\begin{bmatrix}
1/R & -a_{1,r/2}/(2a_{2,1}R)\\
-a_{1,r/2}/(2a_{2,1}R) & a_{1,r/2}^2/(4a_{2,1}^2R)-1/(2a_{2,1}^2)
\end{bmatrix}.\]
Using Lemma~\ref{lemma:antidiagonal} $r/2-1$ times, we have $\mu_0^{\A1}(f)=\frac{r-2}{2}\hyp+ \beta_P$, where $\beta_P$ is the form corresponding to~$P$. Since $\beta_P$ represents $1/R$ and $\det P=-1/(2a_{2,1}^2R)$, we have 
\[\beta_P\cong \biggl\langle\frac{\det P}{1/R},\frac{1}{R}\biggr\rangle\cong\biggl\langle -\frac{1}{2a_{2,1}^2},\frac{1}{R}\biggr\rangle\cong\langle -2,R\rangle.\]
Therefore 
\[ \mu_0^{\A1}(f)= \frac{r-2}{2}\hyp + \langle -2,R\rangle.\] 
Symmetric bilinear forms of the form $\langle -2,R\rangle$ are no more restricted than those of the form $\langle -2,b\rangle$. To see this, let $a_{2,1}=1$, $a_{1,r/2}=0$, and $a_{0,r-1}=b/(4(r-1))$, in which case $R=b$. Therefore the elements of $\GW(k)$ found in this case are exactly those of the form $\frac{r-2}{2}\hyp+\langle -2,b\rangle$ for any $b\in k^{\times}$. 

\subsection{Classification}
As a result of the work above, we have the following classification theorem.

\begin{theorem}\label{thm:characterization}
Let $k$ be a field of characteristic not $2$, $3$, $5$, or~$7$. The elements of $\GW(k)$ of rank at most~$8$ that are $\A1$-Milnor numbers of polynomial maps $\A2_k\to\A1_k$ have the following description. 
\begin{center}
\begin{tabular}{cc}
\toprule
Rank & $\A1$-Milnor numbers\\
\midrule
1 & $\langle b\rangle$\\
2 & $\hyp$\\
3 & $\hyp+\langle b\rangle$\\
4 & $\hyp+\langle-A, AR \rangle$\\
5 & $2\hyp+\langle b\rangle$\\
6 & $2\hyp+\langle-2,b\rangle$\\
7 & $3\hyp+\langle b\rangle$\\
8 & $3\hyp+\langle -2,b\rangle$\\
\bottomrule
\end{tabular}
\end{center}
Here $\hyp$ is the hyperbolic form, $b\in k^{\times}$, and in the rank~$4$ case the constants $A$ and~$R$ are defined by $A=2a_{2,1}^2 - 6a_{3,0}a_{1,2}$ and
\[R=3(27a_{3,0}^2a_{0,3}^2 - 18a_{3,0}a_{2,1}a_{1,2}a_{0,3} + 4a_{2,1}^3a_{0,3} + 4a_{3,0}a_{1,2}^3 - a_{2,1}^2a_{1,2}^2),\]
where $a_{0,3}\neq0$ and either $a_{2,1}$ or~$a_{1,2}$ is nonzero, with the requirement that $A$ and~$R$ are both nonzero.
\end{theorem}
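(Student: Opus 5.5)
The proof assembles results already established, splitting by rank. There are two directions to check. First, every listed form actually occurs as the $\A1$-Milnor number of a two-variable polynomial. Second, no other form of rank at most~$8$ occurs.

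For ranks $1$, $2$, $3$, $5$, and~$7$ we argue as follows.
\begin{itemize}
\item Necessity. Every $\A1$-Milnor number is a local $\A1$-Brouwer degree, so by Table~\ref{tab:A1Brouwer} it must have the stated shape: $\langle b\rangle$, $\hyp$, $\hyp+\langle b\rangle$, $2\hyp+\langle b\rangle$, or $3\hyp+\langle b\rangle$.
\item Sufficiency. Theorem~\ref{thm:allarise}(\ref{part:allariseeasyranks}) with $n=2$ shows each of these forms arises. This uses Example~\ref{ex:onevariable} together with Proposition~\ref{prop:increasen}, and here $\charac k\neq 2,3$ suffices.
\end{itemize}

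For ranks $r\in\{4,6,8\}$, necessity goes through a sequence of reductions. Let $f\in k[x,y]$ with $f(0)=0$ and $\mu_0(f)=r$.
\begin{enumerate}
\item By Lemma~\ref{lemma:ne}, adding high powers $x^{r+2}$ and $y^{r+2}$ does not change $\mu_0^{\A1}(f)$, so we may assume $f$ is convenient.
\item If $f$ is Newton degenerate on some edge, Theorem~\ref{thm:increaseNewton}(\ref{part:increaseNewtonb}) replaces it by a convenient, Newton nondegenerate $g$ with the same $\A1$-Milnor number. This step needs $\charac k\neq 2,3,5,7$.
\item If $f$ is nondegenerate on all edges, Lemma~\ref{lemma:pointdegeneracy} handles possible degeneracy at the vertices.
\item For the resulting convenient, Newton nondegenerate polynomial, Theorem~\ref{thm:kushnirenko} gives $\nu(g)=\mu_0(g)=r$. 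So $\Gamma(g)$ is one of the diagrams with Newton number $r$ listed in Propositions~\ref{prop:newton-one-edges}, \ref{prop:newton-two-edges}, and~\ref{prop:newton-three-edges}.
\end{enumerate}

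The step I expect to be the main obstacle is checking that the four cases (i)--(iv) of this section exhaust these diagrams, after interchanging variables, which does not affect the $\A1$-Milnor number by Theorem~\ref{thm:rightequivalence}. The catalogue is as follows.
\begin{itemize}
\item One edge with Newton number $r$: the only possibilities are $(2,r+1)$ and $(3,r/2+1)$, which are cases (i)--(iii).
\item Two edges: Newton number $4$ forces $(\gamma,\delta)=(2,1)$ and $N=3$; Newton number $6$ forces $(2,1)$ and $N=5$; Newton number $8$ forces $(2,1)$ and $N=7$. Each of these is an edge from $(2,1)$ to $(0,r-1)$, which is case (iv).
\item Three edges: the vertices are $(1,r/2)$ and $(2,1)$, which is also case (iv).
\end{itemize}
In each case, the computations in Subsections~\ref{subsec:casei}--\ref{subsec:caseiv} give the following values.
\begin{itemize}
\item Cases (i) and (ii) give $\tfrac r2\hyp$.
\item Case (iii), which occurs only for $r=4$, gives $\hyp+\langle -A,AR\rangle$ with $A,R\neq0$; the case $B\neq 0$ reduces to $A\neq0$ by symmetry.
\item Case (iv) gives $\tfrac{r-2}{2}\hyp+\langle-2,R\rangle$.
\end{itemize}
It then remains to check that the values $\tfrac r2\hyp$ are already contained in the list. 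For $r=6,8$, take $b=2$: then $\langle -2,2\rangle=\hyp$, so $\tfrac r2\hyp$ has the form $\tfrac{r-2}{2}\hyp+\langle-2,b\rangle$. For $r=4$, it suffices to produce parameters with $\langle -A,AR\rangle=\hyp$, that is, with $-R$ a square. Alternatively, one can show $2\hyp$ is attained directly, since $\hyp+\langle -2,2\rangle$ is not literally of the $\langle -A,AR\rangle$ form.

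Sufficiency for ranks $4$, $6$, $8$ is realized explicitly.
\begin{itemize}
\item For ranks $6$ and $8$, case (iv) with $a_{2,1}=1$, $a_{1,r/2}=0$, and $a_{0,r-1}=b/(4(r-1))$ yields $R=b$, so every $\tfrac{r-2}{2}\hyp+\langle-2,b\rangle$ occurs.
\item For rank $4$, every admissible choice of $a_{i,j}$ in case (iii) (with $a_{3,0}\neq0$, $a_{0,3}\neq0$, one of $a_{2,1},a_{1,2}$ nonzero, and $A,R\neq0$) makes $f$ Newton nondegenerate, since the resultant $R$ is nonzero. So $\hyp+\langle-A,AR\rangle$ is realized by the cubic $f_e$ itself.
\end{itemize}
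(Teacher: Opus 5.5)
Your reductions (convenience, Theorem~\ref{thm:increaseNewton} plus Lemma~\ref{lemma:pointdegeneracy}, Kushnirenko, and the catalogue of diagrams) follow the paper, and so do your treatments of ranks $1,2,3,5,7$ and $6,8$. The rank-$4$ case, however, has a real gap.

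Your necessity argument only places a rank-$4$ two-variable $\A1$-Milnor number in the union of three sets:
\begin{itemize}
\item $2\hyp$, from cases (i) and (ii);
\item the case (iii) forms $\hyp+\langle -A,AR\rangle$;
\item the case (iv) forms $\hyp+\langle -2,R\rangle$ with $R=3(4a_{2,1}a_{0,3}-a_{1,2}^2)/2$, i.e.\ forms $\hyp+\langle-2,b\rangle$.
\end{itemize}
You never show that the case (iv) forms lie in the stated family $\hyp+\langle -A,AR\rangle$. That inclusion is exactly the nontrivial content of the paper's rank-$4$ argument. It works because the statement does not impose $a_{3,0}\neq0$. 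Take $a_{3,0}=0$, $a_{2,1}=1$, $a_{1,2}=0$, $a_{0,3}=b/24$. Then $A=2$, $R=b/2$, $AR=b$, so $\hyp+\langle -A,AR\rangle=\hyp+\langle -2,b\rangle$.

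Taking $b=2$ also settles $2\hyp$, which you left open. Your criterion there is wrong: $\langle -A,AR\rangle\cong\hyp$ iff $R$, not $-R$, is a square. Your alternative of showing $2\hyp$ is attained directly misses the point. It is already attained by case (i); what must be shown is that it belongs to the listed family. Since elements of $\GW(k)$ are isometry classes, it does not matter that $\hyp+\langle-2,2\rangle$ is not literally of the form $\hyp+\langle -A,AR\rangle$.

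There is a matching gap in your rank-$4$ sufficiency argument. You only realize parameter choices with $a_{3,0}\neq0$, where the cubic $f_e$ is convenient and Newton nondegenerate. The family in the statement also allows $a_{3,0}=0$, so those values must be realized too. When $a_{3,0}=0$, $A\neq0$ forces $a_{2,1}\neq0$, and $\langle -A,AR\rangle\cong\langle -2,\,6(4a_{2,1}a_{0,3}-a_{1,2}^2)\rangle$. This is the case (iv) value for, e.g., $a_{2,1}x^2y+a_{1,2}xy^2+a_{0,3}y^3+x^4$, which is convenient and Newton nondegenerate with $\mu_0=4$ when $R\neq0$.

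One harmless slip: in ranks $6,8$, your choice $a_{0,r-1}=b/(4(r-1))$ gives $R=b/2$, not $R=b$. Since $b$ ranges over all of $k^\times$, this does not affect the conclusion.
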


\begin{proof}
Theorem \ref{thm:allarise}(\ref{part:allariseeasyranks}) gives the results for ranks $1$, $2$, $3$, $5$, and~$7$, while ranks $4$, $6$, and~$8$ were considered in this section. Since $\langle-2,b\rangle\cong \hyp$ when $b=2$, the form $\frac{r}{2} \hyp$ found for Case~(i) in Section~\ref{subsec:casei} and Case~(ii) in Section~\ref{subsec:caseii} are among those found for Case~(iv) in Section~\ref{subsec:caseiv}. This completes the proof for ranks $6$ and~$8$.

For rank~$4$, the forms found in Section~\ref{subsec:caseiv} are among those found in Section~\ref{subsec:caseiii}. This can be seen by letting $a_{3,0}=0$, $a_{2,1}=1$, $a_{1,2}=0$, and $a_{0,3}=b/24$, in which case $A=2$ and $R=1$. The form $\hyp+\langle-A, AR \rangle$ thus becomes $\hyp+\langle -2,b\rangle$.
\end{proof}

\section{Over specific fields}\label{sec:specificfields}

The results of Section~\ref{sec:classifyA1Milnor} hold over any field of characteristic~$0$. In the simplest case, over~$\C$, the Grothendieck--Witt ring has only one element of each rank, and Example~\ref{ex:onevariable} shows that this is the $\A1$-Milnor number of a single-variable polynomial. Proposition~\ref{prop:increasen} then implies that, for all~$n$, each element of the Grothendieck--Witt ring arises as the $\A1$-Milnor number of a polynomial map $f\colon\A{n}_k \to \A1_k$. This section explores what happens for some other specific fields. 

\subsection{Over subfields of \texorpdfstring{$\R$}{R}}
Over subfields of~$\R$, the signature (see Section~\ref{subsec:GWring}) of a symmetric bilinear form places restrictions on the $\A1$-Milnor numbers.

\begin{theorem}\label{thm:possiblesignatures}
Let $k$ be a subfield of~$\R$, and let $r\in\{4,6,8\}$. If $\beta$ is an element of $\GW(k)$ that is represented by a symmetric bilinear form of rank~$r$ and is the $\A1$-Milnor number of a polynomial map $\A2_k\to\A1_k$, then $s_{\beta}\in\{0, -2\}$. 
\end{theorem}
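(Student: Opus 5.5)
The plan is to read the signatures off the classification in Theorem~\ref{thm:characterization}. This applies because a subfield $k\subseteq\R$ has characteristic~$0$. The ranks $6$ and~$8$ are immediate, and rank~$4$ comes down to a sign relation between the two constants $A$ and~$R$.

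\emph{Ranks $6$ and $8$.} By Theorem~\ref{thm:characterization}, $\beta=\frac{r-2}{2}\hyp+\langle -2,b\rangle$ for some $b\in k^\times$. Since $s_{\hyp}=0$, we get $s_\beta=-1+\operatorname{sign}(b)$, which is $0$ or~$-2$.

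\emph{Rank $4$: reduction.} Here $\beta=\hyp+\langle -A,AR\rangle$, so
\[s_\beta=\operatorname{sign}(-A)+\operatorname{sign}(AR)=\operatorname{sign}(A)\bigl(\operatorname{sign}(R)-1\bigr).\]
If $R>0$, this is $0$. If $R<0$, it is $-2\operatorname{sign}(A)$. So it suffices to show that $R<0$ forces $A>0$.

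\emph{Rank $4$: the key step.} I would use the classical discriminant of a cubic. Write $a=a_{3,0}$, $b=a_{2,1}$, $c=a_{1,2}$, $d=a_{0,3}$, so that $f_e(x,1)=ax^3+bx^2+cx+d$. Its discriminant is
\[\operatorname{Disc}=b^2c^2-4ac^3-4b^3d-27a^2d^2+18abcd,\]
and comparing with the formula for~$R$ gives $R=-3\operatorname{Disc}$. Also $A=2(b^2-3ac)$, and $4(b^2-3ac)$ is the discriminant of the derivative $3ax^2+2bx+c$.
\begin{itemize}
\item Case $a\neq0$. If $R<0$, then $\operatorname{Disc}>0$, so the real cubic has three distinct real roots. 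By Rolle's theorem its derivative then has two distinct real roots, so $b^2-3ac>0$, i.e.\ $A>0$.
\item Case $a=0$. Then $A=2b^2\ge0$, and since $A\neq0$ by hypothesis, $A>0$ regardless of the sign of~$R$.
\end{itemize}
In both cases $s_\beta\in\{0,-2\}$.

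The main obstacle is only bookkeeping: I must check that the expression for $R$ in Theorem~\ref{thm:characterization} is exactly $-3$ times the standard cubic discriminant, with the signs matched term by term. Note that the symmetric normalization $A\neq0$ used in Case~(iii) is already built into the statement, so no further case on $B$ is needed.
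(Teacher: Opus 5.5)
Your proof is correct. Its overall structure matches the paper's: read the signature off Theorem~\ref{thm:characterization}, dispose of ranks $6$ and $8$ directly, and reduce rank $4$ to ruling out $A<0$ together with $R<0$.

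The difference is in the key step. The paper treats $R$ as a quadratic polynomial in $a_{0,3}$ with leading coefficient $81a_{3,0}^2$. It computes that the discriminant of this quadratic is a positive multiple of $A^3$, so $A<0$ forces $R>0$.

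You instead identify $R=-3\,\mathrm{Disc}\bigl(f_e(x,1)\bigr)$ and $A$ as half the discriminant of the derivative, then argue through real roots and Rolle's theorem. Your term-by-term identification is right. These are two faces of one fact: as a function of $d=a_{0,3}$, the cubic discriminant vanishes exactly when $-d$ is a critical value of $ax^3+bx^2+cx$, and those critical values are real precisely when $b^2-3ac\ge 0$.

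Each route has something to offer:
\begin{enumerate}
\item The paper's route is a self-contained polynomial identity and needs no facts about roots of real cubics. It does tacitly require $a_{3,0}\neq0$ so that the leading coefficient is positive; this holds automatically when $A<0$, but the paper does not say so.
\item Your route explains why the sign relation holds: $f_e$ splitting into three distinct real linear factors forces its derivative to split. It also treats the case $a_{3,0}=0$, which arises from the Case~(iv) forms, explicitly.
\end{enumerate}
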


\begin{proof}
We first prove the case $r=4$. In Theorem~\ref{thm:characterization}, we show that the elements in $\GW(k)$ of rank~$4$ that are $\A1$-Milnor numbers of polynomials in two variables are those represented by a form $\beta=\hyp+\langle -A,AR \rangle $, where $A=2a_{2,1}^2 - 6a_{3,0}a_{1,2}$ and 
\[R=3(27a_{3,0}^2a_{0,3}^2 - 18a_{3,0}a_{2,1}a_{1,2}a_{0,3} + 4a_{2,1}^3a_{0,3} + 4a_{3,0}a_{1,2}^3 - a_{2,1}^2a_{1,2}^2).\]
If $A$ is negative, then the discriminant of~$R$, considered as a quadratic polynomial in~$a_{0,3}$, is 
\[432(a_{2,1}^2 - 3a_{3,0}a_{1,2})^3= 18A^3<0.\]
Since the leading coefficient of $a_{0,3}^2$ in~$R$ is $81 a_{3,0}^2$, this would mean $R$ is positive for all choices of $a_{0,3}$. Hence $-A$ and~$R$ cannot both be positive. Thus $s_{\beta}=2-2=0$ or $1-3=-2$. 

For the case $r=6$, it was shown in Theorem~\ref{thm:characterization} that the elements in $\GW(k)$ of rank~$6$ that are $\A1$-Milnor numbers of polynomials in two variables are those represented by a form $\beta=2\hyp + \langle -2, b \rangle $, where $b\in k^{\times}$. If $b>0$, then $s_{\beta}=3-3=0$. If $b<0$, then $s_{\beta}=2-4=-2$.

For the case $r=8$, it was shown in Theorem~\ref{thm:characterization} that the elements in $\GW(k)$ of rank~$8$ that are $\A1$-Milnor numbers of polynomials in two variables are those represented by a form $\beta=2\hyp + \langle -2, b \rangle $, where $b\in k^{\times}$. If $b>0$, then $s_{\beta}=4-4=0$. If $b<0$, then $s_{\beta}=3-5=-2$.
\end{proof}

We can use Theorem~\ref{thm:possiblesignatures} to show that $\A1$-Milnor numbers of singularities determined by polynomials $\A2_k\to\A1_k$ are more restricted than local $\A1$-Brouwer degrees of polynomial maps $\A2_k\to\A2_k$ and $\A1$-Milnor numbers of polynomials $\A3_k\to\A1_k$. 

\begin{theorem}\label{thm:notallarise}
Let $k$ be a subfield of~$\R$. 
\begin{enumerate}[label=\textup{(\alph*)},ref=\textup{\alph*}]
\item There exist elements of $\GW(k)$ of ranks $4$, $6$, and~$8$ that are local $\A1$-Brouwer degrees of polynomial maps $\A2_k\to\A2_k$ but are not $\A1$-Milnor numbers of polynomial maps $\A2_k\to\mathbb{A}_k^1$.
\item There exist elements of $\GW(k)$ of ranks $4$, $6$, and~$8$ that are $\A1$-Milnor numbers of polynomial maps $\A{3}_k\to\mathbb{A}_k^1$ but are not $\A1$-Milnor numbers of polynomial maps $\A2_k\to\mathbb{A}_k^1$.
\end{enumerate}
\end{theorem}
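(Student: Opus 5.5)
The plan is to exhibit, for each $r\in\{4,6,8\}$, an element of $\GW(k)$ of rank~$r$ and signature~$2$, and to check that it is realized both as a local $\A1$-Brouwer degree of a map $\A2_k\to\A2_k$ and as an $\A1$-Milnor number of a map $\A3_k\to\A1_k$. Theorem~\ref{thm:possiblesignatures} then shows that no such element is the $\A1$-Milnor number of a map $\A2_k\to\A1_k$, since those elements have signature $0$ or $-2$. The natural candidate is
\[\beta_r=\frac{r-2}{2}\hyp+\langle 1,1\rangle,\]
whose signature is $0+2=2$, because $\hyp$ has signature~$0$ and $1>0$.

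For part~(b), I will use Example~\ref{ex:evenrankthreevariables} with $b=c=1$ and $r\in\{4,6,8\}$. This requires $\charac k\nmid r-1$, which holds because $\charac k=0$ for a subfield of~$\R$. The example gives a polynomial
\[f=x^2y-\frac{1}{r-1}y^{r-1}-\frac14 z^2\]
in three variables with $\mu_0^{\A1}(f)=\frac{r-2}{2}\hyp+\langle 1,1\rangle=\beta_r$. Combined with Theorem~\ref{thm:possiblesignatures}, this proves~(b).

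For part~(a), I need $\beta_r$ to be a local $\A1$-Brouwer degree of a map $\A2_k\to\A2_k$. This is supplied by \cite[Lemma~2.6]{QuickStrandWilson}, which the paper cites as showing that every element $m\hyp+\langle b,c\rangle$ with $m\geq1$ is a local $\A1$-Brouwer degree. Their construction comes from maps $\A2_k\to\A2_k$, consistent with the statement that all their examples of rank at most~$7$ arise this way. For $r=4,6,8$ we take $m=(r-2)/2\in\{1,2,3\}$ and $b=c=1$.

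A direct alternative also works: take $F=\nabla g$ for the two-variable polynomial $g$ of Example~\ref{ex:evenranktwovariables}, and compose with a linear automorphism of the target. This scales the EKL form by the determinant, which suffices to produce $\frac{r-2}{2}\hyp+\langle b,c\rangle$ for suitable $b,c$. For instance, replacing $F$ by $(F_1,-F_2)$ multiplies the form by $\langle-1\rangle$, and then choosing $c/b>0$ in the example gives signature~$2$.

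The main obstacle is pinning down that the cited lemma genuinely gives maps $\A2_k\to\A2_k$ in rank~$8$, since Table~\ref{tab:A1Brouwer} only covers rank at most~$7$. If there is any doubt, I will fall back on the direct construction $(g_x,-g_y)$. Multiplying $F_2$ by $-1$ multiplies the distinguished socle element by $-1$, so the form for $\nabla g$ gets multiplied by $\langle -1\rangle$. This turns $\frac{r-2}{2}\hyp+\langle-\tfrac12,-\tfrac{c}{2b}\rangle$ into $\frac{r-2}{2}\hyp+\langle \tfrac12,\tfrac{c}{2b}\rangle$, which has signature~$2$ whenever $c/b>0$.
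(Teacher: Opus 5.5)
Your proposal is correct and follows essentially the same route as the paper: you exhibit the signature-$2$ class $\frac{r-2}{2}\hyp+\langle 1,1\rangle$ as a local $\A1$-Brouwer degree via \cite[Lemma~2.6]{QuickStrandWilson} for (a) and as a three-variable $\A1$-Milnor number via Example~\ref{ex:evenrankthreevariables} for (b), then rule it out for two variables by Theorem~\ref{thm:possiblesignatures}. Your fallback for (a) is also valid: $(g_x,-g_y)$ negates the distinguished socle element, so its EKL form is $\langle -1\rangle\cdot\mu_0^{\A1}(g)$, and this argument does not depend on the external citation in rank~$8$.
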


\begin{proof}
By \cite[Lemma~2.6]{QuickStrandWilson}, every element of $\GW(k)$ of the form $m\hyp+\langle b,c\rangle$ is a local $\A1$-Brouwer degree of a polynomial map $\A2_k\to\A2_k$. By Example~\ref{ex:evenrankthreevariables}, every element of $\GW(k)$ of the form $m \hyp+\langle b,c\rangle$, where $m$ is a positive integer, is an $\A1$-Milnor number of a singularity determined by a polynomial $\A3_k\to \A1_k$. Letting $b,c>0$, this shows that there are elements $\beta\in \GW(k)$ with signature~$2$. However, by Theorem~\ref{thm:possiblesignatures}, forms $\beta\in \GW(k)$ with signature~$2$ cannot arise as $\A1$-Milnor numbers of polynomials~$f$ in two variables with $f(0)=0$ and $1\leq\mu_0(f)<\infty$. 
\end{proof}

In the converse direction, over~$\R$ itself, every form with the requisite signature arises as an $\A1$-Milnor number of a polynomial in two variables.

\begin{proposition}
Let $r\geq4$ be an even integer, and let $n\geq2$ be an integer. If an element of $\GW(\R)$ is represented by a symmetric bilinear form that has signature $0$ or~$-2$, then it is the $\A1$-Milnor number of a polynomial map $f\colon\A{n}_\R\to\A1_\R$ with $f(0)=0$ and $1\leq\mu_0(f)<\infty$.
\end{proposition}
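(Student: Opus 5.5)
The plan is to reduce to explicit realizations and lean on Section~\ref{sec:classifyA1Milnor}, adding variables at the end. The statement presumably concerns forms of rank~$r$ (the even integer in the hypothesis). Over~$\R$, nondegenerate forms are classified by rank and signature: every class is $p\langle1\rangle+q\langle-1\rangle$. So a form $\beta$ of even rank $r\geq 4$ and signature $0$ is $\frac{r}{2}\hyp$. A form of rank~$r$ and signature $-2$ is $\frac{r-2}{2}\hyp+\langle-1,-1\rangle$. It therefore suffices to realize these two classes by two-variable polynomials and then apply Proposition~\ref{prop:increasen} to pass to any $n\geq2$.

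For signature~$0$, I would use Example~\ref{ex:onevariable} with $b=1$. Since $\charac\R=0$, the polynomial $\frac{1}{r+1}x^{r+1}$ has $\A1$-Milnor number $\frac{r}{2}\hyp$. This is a one-variable polynomial, and Proposition~\ref{prop:increasen} promotes it to any $n\geq2$ variables.

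For signature $-2$, I would use Example~\ref{ex:evenranktwovariables}. For even $r\geq4$, the polynomial $g=x^2y-\frac{b}{c(r-1)}y^{r-1}$ has
\[\mu_0^{\A1}(g)=\frac{r-2}{2}\hyp+\Bigl\langle-\frac12,-\frac{c}{2b}\Bigr\rangle.\]
Taking $b=c=1$ gives $\frac{r-2}{2}\hyp+\langle-\frac12,-\frac12\rangle$. Over~$\R$ we have $\langle-\frac12\rangle=\langle-2\rangle=\langle-1\rangle$, since $2$ is a square. So this class equals $\frac{r-2}{2}\hyp+\langle-1,-1\rangle$, as required. Alternatively, Case~(iv) of Section~\ref{subsec:caseiv} gives $\frac{r-2}{2}\hyp+\langle-2,b\rangle$ with $b<0$, but only for $r\le 8$. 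Example~\ref{ex:evenranktwovariables} is preferable because it is valid for all even $r\geq4$. Then Proposition~\ref{prop:increasen} gives all $n\geq2$.

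Two points need a quick check. Both polynomials satisfy $f(0)=0$ and have $\mu_0=r$ finite, as computed in those examples. Also, Proposition~\ref{prop:increasen} preserves these properties: Corollary~\ref{cor:addterm} adds $\frac12 z^2$, and this multiplies the Milnor number by~$1$ and the class by $\langle1\rangle$. The only mildly delicate step is the signature bookkeeping, namely confirming that signature $\pm$ rank pins down the class in $\GW(\R)$ (Sylvester's law of inertia). There is no real obstacle beyond that.
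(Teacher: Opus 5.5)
Your proof is correct and follows the paper's argument. You identify the two classes as $\frac{r}{2}\hyp$ and $\frac{r-2}{2}\hyp+\langle-1,-1\rangle$, realize the signature $-2$ class by Example~\ref{ex:evenranktwovariables} with $b=c=1$ exactly as the paper does, and then apply Proposition~\ref{prop:increasen}. The only difference is that for signature $0$ you use the one-variable polynomial of Example~\ref{ex:onevariable}, whereas the paper takes Example~\ref{ex:evenranktwovariables} with $b=1$, $c=-1$ to get $\frac{r-2}{2}\hyp+\langle-\frac12,\frac12\rangle=\frac{r}{2}\hyp$; both choices work equally well.
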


\begin{proof}
If an element of $\GW(\R)$ has signature~$0$, then it is isometric to $\frac{r-2}{2}\hyp+\langle-1,1\rangle$. This is the $\A1$-Milnor number of the polynomial in two variables in Example~\ref{ex:evenranktwovariables} with $b=1$ and $c=-1$, and hence, by Proposition~\ref{prop:increasen}, as the $\A1$-Milnor number of a polynomial in $n$~variables. If an element of $\GW(\R)$ has signature~$-2$, then it is isometric to $\frac{r-2}{2}\hyp+\langle-1,-1\rangle$. This is the $\A1$-Milnor number of the polynomial in two variables in Example~\ref{ex:evenranktwovariables} with $b=c=1$, and hence, by Proposition~\ref{prop:increasen}, as the $\A1$-Milnor number of a polynomial in $n$~variables.
\end{proof}

\subsection{Over \texorpdfstring{$\Q$}{Q}}
We know from Section~\ref{sec:classifyA1Milnor} that the elements of the Grothendieck--Witt ring of rank~$6$ that are $\A1$-Milnor numbers of polynomials~$f$ in two variables with $f(0)=0$ and $1\leq\mu_0(f)<\infty$ are those of the form $2h+ \langle-2,b\rangle$ and the elements of the Grothendieck--Witt ring of rank~$8$ that are $\A1$-Milnor numbers of polynomials~$f$ in two variables with $f(0)=0$ and $1\leq\mu_0(f)<\infty$ are those of the form $3\hyp+ \langle-2,b\rangle $, where in both cases any $b\in k^{\times}$ is possible. We now work over~$\Q$ specifically and characterize these elements of $\GW(\Q)$ in terms of their invariants. 

We refer to Serre~\cite{Serre} for background on symmetric bilinear forms over~$\Q$ and the $p$-adic fields~$\Q_p$. (Serre's treatment is in terms of quadratic forms, which are equivalent to symmetric bilinear forms over fields of characteristic not~$2$.) The isometry class of a nondegenerate symmetric bilinear form~$\beta$ over~$\Q$ of rank~$r$ is determined by its rank~$r_{\beta}$, its signature~$s_{\beta}$, its discriminant $d_{\beta}\in \Q^{\times}/(\Q^{\times})^2$, and the Hasse--Witt invariants $\varepsilon_{p, \beta}=\pm 1$ for each prime~$p$, including $p=\infty$. These satisfy the following relations: 
\begin{enumerate}[label=\textup{(\alph*)},ref=\textup{\alph*}]
\item $-r\leq s_{\beta} \leq r$ and $s_{\beta}\equiv r\pmod{2}$;
\item $d_{\beta} = (-1)^{(r_{\beta}-s_{\beta})/2}$ in $\R^{\times}/(\R^{\times})^2$; 
\item $\varepsilon_{\infty,\beta} = (-1)^{(r_{\beta}-s_{\beta})(r_{\beta}-s_{\beta}-2)/8}$; 
\item if $d_{\beta}= -1$ in $\Q^{\times}_p/(\Q^{\times}_p)^2$, then $\varepsilon_{p, \beta}=1$; 
\item $\varepsilon_{p, \beta}=-1$ for only a finite number of $p$; and
\item $\prod_p \varepsilon_{p, \beta}=1$ (where the product includes $p=\infty$).
\end{enumerate}
Moreover, if these conditions are satisfied for numbers $r$, $s$, $d$, and~$\varepsilon_p$, then there is a nondegenerate symmetric bilinear form~$\beta$ over~$\Q$ with $r_{\beta}=r$, $s_{\beta}=s$, $d_{\beta}=d$, and~$\varepsilon_{p,\beta}=\varepsilon_p$. 

In the following, for a prime~$p$ and a nonzero integer~$n$, let $v_p(n)$ be the \emph{$p$-adic valuation} of~$n$, i.e.\ the largest integer~$j$ such that $p^j$ divides~$n$. We extend the domain of~$v_p$ to nonzero rational numbers by defining $v_p(b/c)= v_p(b)-v_p(c)$. We also let $(b,c)_p$ be the \emph{Hilbert symbol} as in \cite[Chapter~3]{Serre}.

\begin{theorem}\label{thm:invariants}
Let $m$ be a fixed nonnegative integer. The numbers $r$, $s$, $d$, and~$\varepsilon_p$ that are invariants of nondegenerate symmetric bilinear forms over~$\Q$ that are isometric to $m\hyp+\langle -2,b\rangle$ for some $b\in \Q^{\times}$ are precisely those that satisfy the following conditions: 
\begin{enumerate}[label=\textup{(\alph*)},ref=\textup{\alph*}]
\item $r=2m+2$;\label{item:rank}
\item $s\in\{0,-2\}$;\label{item:signature}
\item $d= (-1)^{m+1-s/2}$ in 
$\R^{\times}/(\R^{\times})^2$;\label{item:discriminant}
\item if $p$ is an odd prime, then \label{item:Hilbertp}
\[\vareps_p=\biggl(\frac{(-1)^{m+1}2}{p}\biggr)^{v_p(d)};\]
\item $\eps_{\infty}=(-1)^{(2m+2-s)(2m-s)/8}$; and \label{item:Hilbertinfinity}
\item $\prod_p \vareps_p=1$ \textup{(}where the product includes $p=\infty$\textup{)}. \label{item:Hilbertproduct} 
\end{enumerate}
\end{theorem}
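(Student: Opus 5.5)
The plan is to compute the invariants of $\beta=m\hyp+\langle -2,b\rangle$ directly, and then show that any tuple satisfying (a)--(f) is realized by a suitable $b$, using the Serre classification recalled above.

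\emph{Necessity.} Write $\beta\cong\langle 1,-1,\dots,1,-1,-2,b\rangle$. The rank is $2m+2$, which gives (a). The signature is $-1+\operatorname{sign}(b)\in\{0,-2\}$, which gives (b). The discriminant is $d=(-1)^m(-2b)=(-1)^{m+1}2b$; its sign gives (c) once we check that $b>0$ exactly when $s=0$. Relations (c) and (f) of the general list give (e) and (f).

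For (d), compute the Hasse--Witt invariant $\prod_{i<j}(a_i,a_j)_p$ for the diagonal entries. Pairs $(1,\cdot)_p$ are trivial, and pairs $(-1,-1)_p$ are trivial for odd $p$. Pairs $(-1,x)_p$ with $x\in\{-2,b\}$ are also trivial for odd $p$ when $v_p(x)$ is even. To handle them cleanly, I would instead use $m\hyp\cong m\langle 1,-1\rangle$ and the standard formula $\varepsilon(\beta_1\oplus\beta_2)=\varepsilon(\beta_1)\varepsilon(\beta_2)(d_1,d_2)_p$. The invariant of $m\hyp$ at odd $p$ is $1$, and its discriminant is $(-1)^m$. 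This gives
\[
\varepsilon_p=(-2,b)_p\,\bigl((-1)^m,-2b\bigr)_p .
\]
Substitute $b=(-1)^{m+1}d/2$ modulo squares. For odd $p$, the symbol $(u,x)_p$ with $u$ a $p$-unit equals $\bigl(\tfrac{u}{p}\bigr)^{v_p(x)}$, and $v_p(b)=v_p(d)$. The product then collapses to $\bigl(\tfrac{-2}{p}\bigr)^{v_p(d)}\bigl(\tfrac{(-1)^m}{p}\bigr)^{v_p(d)}=\bigl(\tfrac{(-1)^{m+1}2}{p}\bigr)^{v_p(d)}$. This is (d).

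\emph{Sufficiency.} Given $r,s,d,\varepsilon_p$ satisfying (a)--(f), set $b=(-1)^{m+1}d/2$ and $\beta=m\hyp+\langle-2,b\rangle$. By the necessity computation, $\beta$ has rank $r$ and discriminant $d$. Its signature is $s$, because $d$'s sign forces the sign of $b$ via (c). Its $\varepsilon_p$ agree for odd $p$ by (d), and for $p=\infty$ by (e). At $p=2$, both the given $\varepsilon_2$ and $\varepsilon_{2,\beta}$ are determined by the product formula (f) from the other invariants, so they agree. By the classification, the given invariants are therefore those of $\beta$.

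The main obstacle is the Hilbert symbol bookkeeping in (d): signs from $(-1)^m$, correctly reducing $b$ modulo squares, and the fact that $d$ lies only in $\Q^\times/(\Q^\times)^2$. The last point requires checking that $v_p(d)$ mod $2$ and the Legendre symbol expression are well defined on square classes, which they are.
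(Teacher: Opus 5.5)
Your proposal is correct and follows the paper's proof essentially step for step. You compute the rank, signature, discriminant and Hasse--Witt invariants of $m\hyp+\langle-2,b\rangle$, then for sufficiency take $b=(-1)^{m+1}d/2$ (the paper uses the square-equivalent $(-1)^{m+1}2d$) and recover $\varepsilon_2$ from the product formula; the only cosmetic difference is that you get $\varepsilon_p$ from $\varepsilon(\beta_1\oplus\beta_2)=\varepsilon(\beta_1)\varepsilon(\beta_2)(d_1,d_2)_p$, which avoids the paper's case split on the parity of $m$ when simplifying $(-1,-2)_p^m(-1,b)_p^m(-2,b)_p$.
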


\begin{proof}
We will show that the invariants of every form $m\hyp+\langle-2,b\rangle$ satisfy conditions (\ref{item:rank}) through~(\ref{item:Hilbertproduct}) and that any numbers satisfying these conditions correspond to a form $m\hyp+\langle-2,b\rangle$.

Define $\beta=m\hyp+\langle-2,b\rangle$, and let $r_{\beta}$, $s_{\beta}$, $d_{\beta}$, and~$\vareps_{p,\beta}$ be the corresponding invariants. It is immediate that $r_{\beta}=2m+2$, so~(\ref{item:rank}) holds. If $b>0$, then $s_{\beta}=0$, and if $b<0$, then $s_{\beta}=-2$. Therefore~(\ref{item:signature}) holds. Using~(\ref{item:rank}), the general relation $d_{\beta}=(-1)^{(r_\beta-s_\beta)/2}$ in $\R^{\times}/(\R^{\times})^2$ becomes $d_{\beta}=(-1)^{m+1-s_\beta/2}$, so~(\ref{item:discriminant}) also holds. Now let $p$ be an odd prime. Using the definition of $\vareps_{p, \beta}$ given in \cite[Chapter~IV, Section~2.1]{Serre}, and noting that $(1,b)_p=1$ for all~$b$, and $(-1,-1)_p=1$, we have 
\[\begin{split}
\vareps_{p, \beta}&=(-1, -2)_p^m\, (-1, b)_p^m\, (-2, b)_p.
\end{split}\]
When $m$ is even, using the fact that Hilbert symbols are always~$\pm1$ simplifies this to $\vareps_{p, \beta}=(-2,b)_p$. Using the formula for Hilbert symbols in \cite[Chapter~III, Theorem~1]{Serre} and the fact that $d_{\beta}=\pm2b$ means $v_p(b)=v_p(d_\beta)$, we then have
\[\vareps_{p, \beta}=(-2,b)_p=\biggl(\frac{-2}{p}\biggr)^{v_p(b)}=\biggl(\frac{-2}{p}\biggr)^{v_p(d_{\beta})}.\]

When $m$ is odd, using multiplicative properties of Hilbert symbols and Legendre symbols, we have
\[\begin{split}
\vareps_{p, \beta}&=(-1, -2)_p\, (-1, b)_p\, (-2, b)_p\\
&= (-1,2b)_p\,(-2,b)_p\\
&=\biggl( \frac{-1}{p}\biggr)^{v_p(2b)}\biggl( \frac{-2}{p}\biggr)^{v_p(b)} \\
&=\biggl( \frac{-1}{p}\biggr)^{v_p(d_{\beta})}\biggl( \frac{-2}{p}\biggr)^{v_p(d_{\beta})} \\
&=\biggl( \frac{2}{p}\biggr)^{v_p(d_{\beta})}.
\end{split}\]
The preceding two cases together give~(\ref{item:Hilbertp}). Using~(\ref{item:rank}), the general relation $\varepsilon_{\infty,\beta} = (-1)^{(r_{\beta}-s_{\beta})(r_{\beta}-s_{\beta}-2)/8}$ in $\R^{\times}/(\R^{\times})^2$ becomes $\eps_{\infty}= (-1)^{(2m+2-s_{\beta})(2m-s_{\beta})/8}$, showing~(\ref{item:Hilbertinfinity}). Finally, (\ref{item:Hilbertproduct}) holds for all forms, completing the proof that conditions (\ref{item:rank}) through~(\ref{item:Hilbertproduct}) hold for~$\beta$. 

Conversely, assume conditions (\ref{item:rank}) through~(\ref{item:Hilbertproduct}) are satisfied for some values $r$, $s$, $d$, and~$\vareps_p$, and define $\beta=\langle -2,(-1)^{m+1}2d\rangle$. We will show that $r_{\beta}=r$, $s_{\beta}=s$, $d_{\beta}= d$ in $\Q^{\times}/(\Q^{\times})^2$, and $\vareps_{p, \beta}=\vareps_p$. Using~(\ref{item:rank}), we have $r_{\beta}=2m+2=r$. If $d>0$, then (\ref{item:discriminant}) implies that $m-s/2$ is odd. If $d<0$, then (\ref{item:discriminant}) implies that $m-s/2$ is even. If $m$ is odd and $d>0$, then $(-1)^{m+1}2d>0$, so $s_\beta=0=s$. If $m$ is even and $d>0$, then $(-1)^{m+1}2d<0$, so $s_\beta=-2=s$. If $m$ is odd and $d<0$, then $(-1)^{m+1}2d<0$, so $s_\beta=-2=s$. If $m$ is even and $d<0$, then $(-1)^{m+1}2d>0$, so $s_\beta=0=s$. Thus we have $s_\beta=s$. Since $d_{\beta}=(-1)^{m+1}(-1)^{m+1}4d=4d$, we have $d_{\beta}= d$ in $\Q^{\times}/(\Q^{\times})^2$. For an odd prime~$p$, again using \cite[Chapter~III, Theorem~1]{Serre} we have 
\[\vareps_{p,\beta}=\begin{cases}
\Bigl(\frac{2}{p}\Bigr)^{v_p(d)}&\text{if $m$ is odd,}\\
\Bigl(\frac{-2}{p}\Bigr)^{v_p(d)}&\text{if $m$ is even.}
\end{cases}\]
Together with condition~(\ref{item:Hilbertp}), this gives $\vareps_{p, \beta}=\vareps_p$ for all odd primes~$p$. Using~(\ref{item:rank}), we have $\vareps_{\infty,\beta}=(-1)^{(2m+2-s)(2m-s)/8}= \vareps_{\infty}$, showing~(\ref{item:Hilbertinfinity}). Finally,
\[\vareps_{2,\beta}=\prod_{p\neq 2} \vareps_{p,\beta}=\prod_{p\neq 2} \vareps_p=\vareps_2,\]
where the last equality follows from part~(\ref{item:Hilbertproduct}). 
\end{proof}

\begin{corollary}\label{cor:rank6invariants}
The numbers $s$, $d$, and~$\vareps_p$ that are invariants of symmetric bilinear forms that represent elements of $\GW(\Q)$ of rank~$6$ that are $\A1$-Milnor numbers of polynomial maps $\A2_\Q\to\A1_\Q$ with $f(0)=0$ and $1\leq\mu_0(f)<\infty$ are precisely those that satisfy the following conditions: 
\begin{enumerate}[label=\textup{(\alph*)},ref=\textup{\alph*}]\item $s\in\{0,-2\}$; \label{item:signature6}
\item $d= (-1)^{1-s/2}$ in 
$\R^{\times}/(\R^{\times})^2$; \label{item:discriminant6}
\item if $p$ is an odd prime, then
\[\eps_p=\begin{cases}
1&\text{if $p\equiv1,3\pmod{8}$,}\\
(-1)^{v_p(d)}&\text{if $p\equiv5,7\pmod{8}$};
\end{cases}\]\label{item:Hilbertp6}
\item $\vareps_{\infty} =(-1)^{1-s/2}$; and \label{item:Hilbertinfinity6}
\item $\prod_p \vareps_p=1$ \textup{(}where the product includes $p=\infty$\textup{)}. \label{item:Hilbertproduct6}
\end{enumerate}
\end{corollary}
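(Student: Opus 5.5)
By Theorem~\ref{thm:characterization}, the rank-$6$ elements of $\GW(\Q)$ that are $\A1$-Milnor numbers of two-variable polynomials are exactly the classes $2\hyp+\langle -2,b\rangle$ with $b\in\Q^{\times}$. Conversely, Example~\ref{ex:evenranktwovariables} shows that every such class occurs, so no separate realization argument is needed here. The corollary is therefore Theorem~\ref{thm:invariants} specialized to $m=2$. The plan is to substitute $m=2$ into each condition (\ref{item:rank})--(\ref{item:Hilbertproduct}) and simplify.

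Conditions (\ref{item:rank}), (\ref{item:signature}), (\ref{item:discriminant}), and (\ref{item:Hilbertproduct}) specialize directly to $r=6$ and to (\ref{item:signature6}), (\ref{item:discriminant6}), and (\ref{item:Hilbertproduct6}), since $(-1)^{m+1-s/2}=(-1)^{3-s/2}=(-1)^{1-s/2}$.

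For (\ref{item:Hilbertinfinity}), the exponent becomes $(6-s)(4-s)/8$. For $s=0$ this is $3$, giving $-1=(-1)^{1}$. For $s=-2$ it is $8\cdot 6/8=6$, giving $1=(-1)^{2}$. In both cases this agrees with $(-1)^{1-s/2}$, which is (\ref{item:Hilbertinfinity6}).

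For (\ref{item:Hilbertp}) with $m=2$, we get $\eps_p=\bigl(\frac{-2}{p}\bigr)^{v_p(d)}$. The one step needing care is the supplementary law: $\bigl(\frac{-2}{p}\bigr)=1$ exactly when $p\equiv 1,3\pmod 8$, and $-1$ when $p\equiv 5,7\pmod 8$. This follows from $\bigl(\frac{-1}{p}\bigr)=(-1)^{(p-1)/2}$ and $\bigl(\frac{2}{p}\bigr)=(-1)^{(p^2-1)/8}$. Substituting gives (\ref{item:Hilbertp6}).

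Here $v_p(d)$ is only defined modulo~$2$ when $d$ is a square class, but that ambiguity is harmless because the symbol is raised to that power. No step is a genuine obstacle; the main risk is arithmetic slips in the signature and sign cases.
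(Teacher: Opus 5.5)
Your proposal is correct and is essentially the paper's proof: you use Theorem~\ref{thm:characterization} to identify the rank-$6$ classes as $2\hyp+\langle -2,b\rangle$, then specialize Theorem~\ref{thm:invariants} to $m=2$ and evaluate $\bigl(\frac{-2}{p}\bigr)$ by the supplementary laws. Your exponent $(6-s)(4-s)/8$ for $\varepsilon_\infty$ is the correct specialization of $(2m+2-s)(2m-s)/8$; the paper's written proof has $(6-s)(8-s)/8$, a slip that would give $\varepsilon_\infty=1$ at $s=0$ instead of the required $-1$, so your case check is the right one.
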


\begin{proof}
We showed in Theorem~\ref{thm:characterization} that the elements of $\GW(\Q)$ of rank~$6$ that arise as $\A1$-Milnor numbers of two-variable polynomials are precisely those of the form $2\hyp+\langle-2,b\rangle$. Therefore we just need to apply Theorem~\ref{thm:invariants} with $m=2$. This gives $d=(-1)^{3-s/2}$ in $\R^{\times}/(\R^{\times})^2$, which is equivalent to~(\ref{item:discriminant6}). It also gives 
\[\eps_p=\biggl(\frac{-2}{p}\biggr)^{v_p(d)}.\]
Since
\[\biggl(\frac{-2}{p}\biggr)=\begin{cases}
1&\text{if } p\equiv 1, 3 \pmod{8},\\ 
-1 &\text{if } p\equiv 5, 7\pmod{8},
\end{cases}\]
this is equivalent to~(\ref{item:Hilbertp6}). Finally, it yields $\eps_\infty=(-1)^{(6-s)(8-s)/8}$, which is equivalent to~(\ref{item:Hilbertinfinity6}) in the cases $s=0$ and $s=-2$. 
\end{proof}

\begin{corollary}\label{cor:rank8invariants}
The numbers $d$, $s$, and~$\vareps_p$ that are invariants of symmetric bilinear forms that represent elements of $\GW(\Q)$ of rank~$8$ that are $\A1$-Milnor numbers of polynomial maps $\A2_\Q\to\A1_\Q$ with $f(0)=0$ and $1\leq\mu_0(f)<\infty$ are precisely those that satisfy the following conditions: 
\begin{enumerate}[label=\textup{(\alph*)},ref=\textup{\alph*}]
\item $s\in\{0,-2\}$; \label{item:signature8}
\item $d= (-1)^{-s/2}$ in $\R^{\times}/(\R^{\times})^2$; \label{item:discriminant8}
\item if $p$ is an odd prime, then
\[\eps_p=\begin{cases}
1&\text{if $p\equiv1,7\pmod{8}$,}\\
(-1)^{v_p(d)}&\text{if $p\equiv3,5\pmod{8}$};
\end{cases}\]\label{item:Hilbertp8}
\item $\vareps_{\infty} =1$; and \label{item:Hilbertinfinity8}
\item $\prod_p \vareps_p=1$ \textup{(}where the product includes $p=\infty$\textup{)}. \label{item:Hilbertproduct8}
\end{enumerate}
\end{corollary}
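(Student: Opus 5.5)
The plan mirrors the proof of Corollary~\ref{cor:rank6invariants}. By Theorem~\ref{thm:characterization}, the elements of $\GW(\Q)$ of rank~$8$ that arise as $\A1$-Milnor numbers of two-variable polynomials with $f(0)=0$ and $1\leq\mu_0(f)<\infty$ are exactly those of the form $3\hyp+\langle-2,b\rangle$ with $b\in\Q^{\times}$. (Note that the proof of Theorem~\ref{thm:possiblesignatures} writes $2\hyp$ for rank~$8$, but Theorem~\ref{thm:characterization} gives $3\hyp$.) So it suffices to apply Theorem~\ref{thm:invariants} with $m=3$ and simplify each condition. Condition~(\ref{item:rank}) gives $r=8$, which is built into the statement. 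Condition~(\ref{item:signature}) is copied directly as~(\ref{item:signature8}).

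For the discriminant, Theorem~\ref{thm:invariants}(\ref{item:discriminant}) gives
\[d=(-1)^{4-s/2}=(-1)^{-s/2}\]
in $\R^{\times}/(\R^{\times})^2$, which is~(\ref{item:discriminant8}).

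For odd primes, since $(-1)^{m+1}=(-1)^4=1$, condition~(\ref{item:Hilbertp}) becomes
\[\eps_p=\Bigl(\tfrac{2}{p}\Bigr)^{v_p(d)}.\]
The second supplementary law gives $\bigl(\frac{2}{p}\bigr)=1$ when $p\equiv1,7\pmod 8$ and $\bigl(\frac{2}{p}\bigr)=-1$ when $p\equiv3,5\pmod8$. This yields~(\ref{item:Hilbertp8}).

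For the infinite place, condition~(\ref{item:Hilbertinfinity}) gives
\[\eps_\infty=(-1)^{(8-s)(6-s)/8}.\]
Check the two allowed signatures. For $s=0$ the exponent is $48/8=6$. For $s=-2$ it is $80/8=10$. Both are even, so $\eps_\infty=1$ in either case, which is~(\ref{item:Hilbertinfinity8}). The product formula~(\ref{item:Hilbertproduct8}) carries over unchanged from~(\ref{item:Hilbertproduct}).

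Because Theorem~\ref{thm:invariants} is an if-and-only-if characterization, and each translation above is an equivalence under condition~(\ref{item:signature8}), the listed conditions are precisely the invariants of these forms. There is no real obstacle. The only points needing care are the parity computation for $\eps_\infty$, which must be checked separately for $s=0$ and $s=-2$, and using the correct residue classes of $\bigl(\frac{2}{p}\bigr)$ rather than those of $\bigl(\frac{-2}{p}\bigr)$.
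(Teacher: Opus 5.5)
Your proposal is correct and follows the paper's proof essentially verbatim: you reduce via Theorem~\ref{thm:characterization} to forms $3\hyp+\langle-2,b\rangle$, specialize Theorem~\ref{thm:invariants} to $m=3$ (so $(-1)^{m+1}=1$ gives $\bigl(\frac{2}{p}\bigr)^{v_p(d)}$), and check that $(8-s)(6-s)/8$ is even for $s\in\{0,-2\}$. Your side remark is also right: the rank-$8$ case in the proof of Theorem~\ref{thm:possiblesignatures} should read $3\hyp$ rather than $2\hyp$.
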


\begin{proof}
We showed in Theorem~\ref{thm:characterization} that the elements of $\GW(\Q)$ of rank~$8$ that arise as $\A1$-Milnor numbers of two-variable polynomials are precisely those of the form $3\hyp+\langle-2,b\rangle$. Therefore we just need to apply Theorem~\ref{thm:invariants} with $m=3$. This gives $d=(-1)^{4-s/2}$ in $\R^{\times}/(\R^{\times})^2$, which is equivalent to~(\ref{item:discriminant8}). It also gives 
\[\eps_p=\biggl(\frac{2}{p}\biggr)^{v_p(d)}.\]
Since
\[\biggl(\frac{2}{p}\biggr)=\begin{cases}
1&\text{if } p\equiv 1, 7 \pmod{8},\\ 
-1 &\text{if } p\equiv 3, 5\pmod{8},
\end{cases}\]
this is equivalent to~(\ref{item:Hilbertp8}). Finally, it yields $\eps_\infty=(-1)^{(8-s)(6-s)/8}$, which equals~$1$ when $s=0$ or $s=-2$, thus giving~(\ref{item:Hilbertinfinity8}). 
\end{proof}

\section{A dimension reduction technique}\label{sec:variablereduction}

In Corollary~\ref{cor:addterm}, we saw how to add a new variable to a polynomial while only changing the $\A1$-Milnor number by multiplication by a unit. In this section, we show that, under certain hypotheses, one can find a new polynomial with one fewer variable while only changing the $\A1$-Milnor number by multiplication by a unit. A result of this type for local $\A1$-Brouwer degrees was previously shown by Quick, Strand and Wilson~\cite{QuickStrandWilson}, and we first recall their result.

\begin{theorem}[{\cite[Theorem~2.12]{QuickStrandWilson}}]\label{thm:QSWvariablereduction}
Let $k$ be a field of characteristic not~$2$. Suppose $F\colon\mathbb{A}_k^n\to\mathbb{A}_k^n$ is a map with an isolated zero at the origin and that the rank of $Q_0(F)$ is at least~$1$. If $(F_1,\dots,F_n)\nsubseteq(x_1,\dots,x_n)$, then there is a map $G\colon\mathbb{A}_k^{n-1}\to\mathbb{A}_k^{n-1}$ with isolated zero at the origin and $Q_0(G)\cong Q_0(F)$ and where the EKL forms of $F$ and~$G$ differ only by multiplication by a unit.
\end{theorem}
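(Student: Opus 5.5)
The plan is to eliminate one variable by solving one coordinate equation for $x_n$, identify the two local algebras, and compare distinguished socle elements, so that the EKL forms differ by a single unit.

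\emph{Reading of the hypothesis.} As literally written, $(F_1,\dots,F_n)\nsubseteq(x_1,\dots,x_n)$ would make some $F_i$ a unit in the local ring, forcing $Q_0(F)=0$. That contradicts the rank assumption, so I read the hypothesis as $(F_1,\dots,F_n)\nsubseteq\mathfrak m^2$, with $\mathfrak m=(x_1,\dots,x_n)$. This is also the form used in Theorem~\ref{thm:variablereduction}. So some $F_i$ has a nonzero linear term.

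\emph{Normalization.} First I normalize this component.
\begin{enumerate}[label=\textup{(\roman*)},ref=\textup{\roman*}]
\item Permuting the components of $F$ multiplies the distinguished socle element by $\pm1$. Precomposing with an invertible linear change of coordinates $L$ multiplies it by $\det L$, by the argument of Theorem~\ref{thm:rightequivalence} applied to maps $\A{n}_k\to\A{n}_k$. Both change the EKL form only by a unit of $k$.
\item After these moves I may assume $F_n=x_n+(\text{terms in }\mathfrak m^2)$.
\item Since $Q_0(F)$ is Artinian, there is $N$ with $\mathfrak m^N\subseteq(F_1,\dots,F_n)$ in the local ring. By \cite[Lemma~2.9]{QuickStrandWilson}, as used in Lemma~\ref{lemma:ne}, all computations can be done modulo $\mathfrak m^{N+1}$, i.e.\ in $k[[x_1,\dots,x_n]]$ or in $k[x]/\mathfrak m^{N+1}$.
\end{enumerate}

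\emph{Eliminating $x_n$.} By the formal implicit function theorem (or Weierstrass preparation), $F_n=u\cdot(x_n-\varphi(x'))$, where $x'=(x_1,\dots,x_{n-1})$, $u$ is a unit with $u(0)=1$, and $\varphi\in\mathfrak m'^2$. I truncate $\varphi$ to a polynomial of degree $\le N$. Then I define $G_i(x')$ to be the truncation of $F_i(x',\varphi(x'))$ for $i<n$, and $G=(G_1,\dots,G_{n-1})$. The substitution $x_n\mapsto\varphi(x')$ induces $k[[x]]/(F)\cong k[[x']]/(G)$, so $Q_0(G)\cong Q_0(F)$. In particular, $G$ has an isolated zero at the origin.

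\emph{Comparing socle elements.} This is the main obstacle, especially in characteristics dividing $\dim Q_0(F)$. I would first compare Jacobians. Modulo $F_n$, the last row of $J_F$ equals $u\cdot(-\nabla\varphi,1)$. Adding $\partial\varphi/\partial x_j$ times column $n$ to column $j$ for each $j<n$ turns the first $n-1$ rows into the chain-rule Jacobian of $F_i(x',\varphi(x'))$. This gives $\det J_F=u\cdot\det J_G$ under the isomorphism. When $\charac k\nmid\dim Q_0(F)$ this already suffices. In general I would run the same elimination directly on the decompositions $F_i=\sum_j x_jF_{i,j}$. Writing $x_n=(x_n-\varphi)+\varphi$ and expanding $\varphi=\sum_{j<n}x_j\varphi_j$ gives compatible decompositions for $G$. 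The same determinant manipulation should then give $E_0(F)=u\,E_0(G)$, with a possible extra constant sign. If this bookkeeping proves delicate, I would fall back on the base-change-to-a-larger-field argument of \cite{KassWickelgren2019}, or on \cite[(4.7)~Korollar]{SchejaStorch}.

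\emph{Conclusion.} Take $\eta_G(E_0(G))=1$ and $\eta_F=\eta_G(u^{-1}\,\cdot\,)$. The EKL form of $F$ is then $(a,b)\mapsto\eta_G(u^{-1}ab)$. Since $\charac k\neq2$ and $u^{-1}=c(1+\mathfrak n)$ with $c\in k^\times$ and $\mathfrak n$ nilpotent, the binomial series gives $1+\mathfrak n=w^2$ for some unit $w$. The substitution $a\mapsto wa$ is then an isometry showing that $\omega_F\cong\langle c\rangle\cdot\omega_G$. Together with the units from the normalization step, the EKL forms of $F$ and $G$ differ by multiplication by a unit.
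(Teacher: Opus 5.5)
Your proposal is correct. Reading the hypothesis as $(F_1,\dots,F_n)\nsubseteq\mathfrak m^2$ is also right: as printed it would force $Q_0(F)=0$, and the paper's analogue, Theorem~\ref{thm:variablereduction}, uses $\mathfrak m^2$.

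The paper gives no proof of this statement; it imports it from \cite{QuickStrandWilson}. The nearest argument in the paper is its proof of Theorem~\ref{thm:variablereduction} for gradients. There the $x_n$-linear term is removed by iterated completion of the square (Lemma~\ref{lemma:increaseorder}), followed by truncation via Lemma~\ref{lemma:ne}. That slower route is forced because each step must be a right equivalence $f\mapsto f\circ\Phi$, so that the output is still a gradient. You only need some map $G$, so you can solve $F_n=0$ for $x_n$ at once by the formal implicit function theorem and substitute, which is shorter. Your final comparison of distinguished socle elements is the same block-triangular determinant computation the paper uses to get $E_0(\nabla f)=2aE_0(\nabla g)$.

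Two remarks tighten your write-up.

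First, the step you flag as the main obstacle needs neither a sign nor a fallback. Write $\varphi=\sum_{j<n}x_j\varphi_j$, and take $F_{n,j}=-u\varphi_j$ for $j<n$ and $F_{n,n}=u$. Adding $\varphi_j$ times column $n$ to column $j$ gives $\det(F_{i,j})=u\det(F_{i,j}+\varphi_jF_{i,n})_{i,j<n}$. After $x_n\mapsto\varphi$, the matrix $(F_{i,j}+\varphi_jF_{i,n})_{i,j<n}$ is a decomposition matrix for $G_i=F_i(x',\varphi)$. Since $E_0(G)$ lies in the socle, $u\,E_0(G)=u(0)E_0(G)=E_0(G)$. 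So your square-root argument is correct but superfluous, and the only unit factors come from the normalization.

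Second, for the linear change of coordinates, do not cite the argument of Theorem~\ref{thm:rightequivalence}. That argument normalizes $\eta$ by $\eta(\det H)=\mu_0$, which is only valid when $\charac k\nmid\mu_0$. Instead, the identity $F_i(Lx)=\sum_l x_l\sum_j L_{jl}F_{i,j}(Lx)$ shows directly that $E_0(F\circ L)$ corresponds to $\det L\cdot E_0(F)$ in every characteristic $\neq 2$. Alternatively, avoid $L$ altogether: eliminate a variable whose coefficient in the linear part of $F_n$ is nonzero. This costs only a column permutation, hence a sign.
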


As Quick, Strand, and Wilson point out in \cite[Remark~2.13]{QuickStrandWilson}, this type of reduction was introduced by McKean \cite[Lemma~5.7]{McKean} when calculating the local $\A1$-Brouwer degree of the intersection of two plane curves. In both \cite{McKean} and~\cite{QuickStrandWilson}, the reduction finds a function with fewer variables, an isomorphic local ring, and where the form in question differs only by multiplication by a unit. 

For $\A1$-Milnor numbers, one finds the local $\A1$-Brouwer degree of $F=\nabla f$, but applying Theorem~\ref{thm:QSWvariablereduction} to~$F$ yields a~$G$ that is not necessarily the gradient of some~$g$. Despite this complication, we are able to prove a reduction theorem for $\A1$-Milnor numbers in Theorem~\ref{thm:variablereduction} as an analogue to Theorem~\ref{thm:QSWvariablereduction}.

\begin{definition}
Let $R$ be a local ring with maximal ideal~$\m$. If $f\in \m$, the \emph{order} of~$f$, denoted $\order(f)$, is the largest integer~$\ell$ such that $f\in \m^{\ell}$. By convention, the zero polynomial has order~$\infty$.
\end{definition}

On the way to proving Theorem~\ref{thm:variablereduction}, we first show that, if a polynomial has a $y^2$ term, then we can modify the polynomial to increase the order of the coefficient of~$y$ while preserving the $\A1$-Milnor number. The following example illustrates this approach. 

\begin{example}\label{ex:running4}
Let $k$ be a field of characteristic not $2$ or $5$. Let $f=x^4 + 2 x^2y + y^2 + x^3y+x^3y^{2}$. Considering $f$ as a polynomial in~$y$, we have $f = x^4 + (2 x^2 + x^3) y + y^2 + x^3y^{2}$, where the coefficient of $y$ has order~$2$. Completing the square, we have
\[\begin{split}
f & = \biggl(y + \frac{2x^2+x^3}{2}\biggr)^2 - \biggl(\frac{2x^2+x^3}{2}\biggr)^2 + x^4 + x^3y^{2}.
\end{split}\]
We then perform a change of variables to obtain a polynomial $g\in k[x,Y]$. Precisely, let
\[\begin{split}
g(x,Y) &= f\biggl(x, Y- \frac{2x^2+x^3}{2}\biggr)\\
&= Y^2 - \biggl(\frac{2x^2+x^3}{2}\biggr)^2 + x^4 + x^3 \biggl( Y- \frac{2x^2+x^3}{2} \biggr)^{2}\\
&= x^4-\biggl(\frac{2x^2+x^3}{2}\biggr)^2 + x^3 \biggl(\frac{2x^2+x^3}{2}\biggr)^{2} -x^3(2x^2+x^3) Y + Y^2 + x^3Y^2. 
\end{split}\]
In this case, $g$ is Newton nondegenerate as $\Gamma(g)$ consists of the edge from $(0,2)$ to $(5,0)$ and $\charac k\notin\{ 2, 5\}$. Then, by Theorem~\ref{thm:kushnirenko}, $\mu_0(g)=\nu(g)=4$. Moreover, $\mu_0^{\A1}(f)=\mu_0^{\A1}(g)$ by Theorem~\ref{thm:rightequivalence}. The change of variables increased the order of the coefficient of~$y$ (now~$Y$) to~$5$. Since the order of $x^3(2x^2+x^3) Y$ is $6$, which is at least $\mu_0(g)+2$, it can be removed from~$g$ to obtain a new polynomial $h$ with the same 
$\A1$-Milnor number as~$g$ by Lemma~\ref{lemma:ne}. One then has
\begin{equation} 
\label{eqn:hY}
h(x,Y) = x^4-\biggl(\frac{2x^2+x^3}{2}\biggr)^2 + x^3 \biggl(\frac{2x^2+x^3}{2}\biggr)^2 + Y^2 + x^3Y^2. 
\end{equation}
Since $h$ lacks a $Y$~term and $h_Y=Y(2+2x^3)$, the variable~$Y$ will be zero in the local ring $Q_0(\nabla{h})$. This allows us to eliminate the variable~$Y$ from~$h$ as demonstrated in Example~\ref{ex:running5}. 
\end{example}

The following lemma shows that the approach in the above example works more generally to transform $f$, considered as a polynomial in~$x_n$, into another polynomial for which the coefficient of~$x_n$ has larger order. Repeatedly applying this lemma will then make the order of the coefficient of~$x_n$ sufficiently large that it can be ignored according to Lemma~\ref{lemma:ne}.

\begin{lemma}\label{lemma:increaseorder}
Let $k$ be a field of characteristic not~$2$, let $n,q\geq2$ be integers, and let $a\in k^{\times}$. Let $f\in k[x_1,\dots, x_{n}]$ be a polynomial of the form 
\[ f = \alpha_0 + \alpha_1 x_n + ax_n^2+\sum_{i=2}^q \alpha_i \cdot x_n^i,\]
where $\alpha_i \in k[x_1,\dots, x_{n-1}]$ are polynomials for $i\in\{0,1,\dots,q\}$ with $\alpha_1\neq 0$ and $\alpha_0(0)=\alpha_1(0)=\alpha_2(0)=0$. If $1\leq\mu_0(f)<\infty$, then there exists a polynomial $g\in k[x_1,\dots, x_{n}]$ of the form
\[ g= \beta_0 + \beta_1 x_n + ax_n^2+\sum_{i=2}^q \beta_i \cdot x_n^i, \]
where $\beta_i \in k[x_1,\dots, x_{n-1}]$ for $i\in\{0,1,\dots,q\}$ with $\beta_0(0)=\beta_1(0)=\beta_2(0)=0$ and $\order(\beta_1) > \order(\alpha_1)$ such that $Q_0(\nabla f) \cong Q_0(\nabla g)$ and $\mu^{\mathbb{A}^1}_0 ( f ) = \mu^{\mathbb{A}^1}_0 (g)$. 
\end{lemma}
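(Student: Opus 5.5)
The plan is to complete the square in $x_n$, exactly as in Example~\ref{ex:running4}, and then invoke invariance under right equivalence. Set $c=\alpha_1/(2a)\in k[x_1,\dots,x_{n-1}]$, which is possible since $\charac k\neq 2$ and $a\neq 0$. Define
\[\Phi(x_1,\dots,x_n)=(x_1,\dots,x_{n-1},\,x_n-c(x_1,\dots,x_{n-1})).\]
This is a polynomial map with polynomial inverse $(x_1,\dots,x_{n-1},x_n+c)$. Moreover $\Phi(0)=0$ because $\alpha_1(0)=0$. Put $g=f\circ\Phi$. Lemma~\ref{lemma:localringisom} then gives $Q_0(\nabla f)\cong Q_0(\nabla g)$, and Theorem~\ref{thm:rightequivalence} gives $\mu_0^{\A1}(f)=\mu_0^{\A1}(g)$. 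So the only real content is to check that $g$ has the stated shape and that the order of the $x_n$-coefficient increases.

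Next I would expand $g=\alpha_0+\alpha_1(x_n-c)+a(x_n-c)^2+\sum_{i=2}^q\alpha_i(x_n-c)^i$ and collect powers of $x_n$. The degree in $x_n$ is still at most $q$, and each coefficient lies in $k[x_1,\dots,x_{n-1}]$.

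\begin{itemize}
\item Constant term: $\beta_0=\alpha_0-\alpha_1c+ac^2+\sum_{i\ge2}\alpha_i(-c)^i$. It vanishes at $0$ because $\alpha_0(0)=c(0)=0$.
\item Linear term: the piece $-2ac\,x_n=-\alpha_1x_n$ exactly cancels $\alpha_1x_n$. This leaves
\[\beta_1=\sum_{i=2}^q i\,\alpha_i(-c)^{i-1}.\]
\item Quadratic term: writing the coefficient as $a+\beta_2$, we get $\beta_2=\alpha_2+\sum_{i\ge3}\binom{i}{2}\alpha_i(-c)^{i-2}$. This vanishes at $0$ since $\alpha_2(0)=0$ and $c(0)=0$.
\item Higher terms: the coefficients $\beta_i$ for $i\ge3$ come out similarly.
\end{itemize}

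The key step, and the one requiring care, is the order estimate $\order(\beta_1)>\order(\alpha_1)$. Let $t=\order(\alpha_1)$, so $1\le t<\infty$ because $\alpha_1\neq0$ and $\alpha_1(0)=0$. Then $\order(c)=t$.

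\begin{itemize}
\item For $i=2$, the term $2\alpha_2(-c)$ has order at least $\order(\alpha_2)+t\ge 1+t$, since $\alpha_2(0)=0$.
\item For $i\ge3$, the term $i\alpha_i(-c)^{i-1}$ has order at least $(i-1)t\ge 2t\ge t+1$. No vanishing of $\alpha_i(0)$ is needed here.
\end{itemize}

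Hence $\order(\beta_1)\ge t+1>t$, where $\beta_1$ may even be $0$, which has order $\infty$ by the convention above. Finally, $\beta_1(0)=0$ follows from this same bound, and $1\le\mu_0(g)<\infty$ follows from $\mu_0(g)=\mu_0(f)$ via the isomorphism of local algebras.
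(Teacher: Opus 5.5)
Your proposal is correct and matches the paper's proof essentially step for step. The paper also completes the square via the substitution $x_n = X_n - \alpha_1/(2a)$ and obtains the same formulas for $\beta_0,\beta_1,\beta_2$. It bounds $\order(\beta_1)$ term by term, using $\alpha_2(0)=0$ for $i=2$ and $\order(\alpha_1^{i-1})>\order(\alpha_1)$ for $i\geq 3$, and concludes via Lemma~\ref{lemma:localringisom} and Theorem~\ref{thm:rightequivalence}.
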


\begin{proof}
By completing the square, letting $x_n=X_n-\alpha_1/(2a)$, and expanding, we have
\[\begin{split}
f&=\alpha_0 + \alpha_1 x_n + ax_n^2+\sum_{i=2}^q \alpha_i \cdot x_n^i\\ & =a\Bigl(x_n+\frac{\alpha_1}{2a}\Bigr)^2-\frac{\alpha_1^2}{4a}+\alpha_0+\sum_{i=2}^q \alpha_i \cdot x_n^i\\
& =aX_n^2+\alpha_0-\frac{\alpha_1^2}{4a}+\sum_{i=2}^q \alpha_i \cdot \Bigl(X_n-\frac{\alpha_1}{2a}\Bigr)^i\\
& =aX_n^2+\alpha_0-\frac{\alpha_1^2}{4a}+\sum_{i=2}^q\alpha_i\cdot\Bigl(-\frac{\alpha_1}{2a}\Bigr)^i+\sum_{i=2}^qi\alpha_i\cdot\Bigl(-\frac{\alpha_1}{2a}\Bigr)^{i-1}X_n\\
& \qquad+\sum_{j=2}^q\sum_{i=j}^q\binom{i}{j}\alpha_i\cdot\Bigl(-\frac{\alpha_1}{2a}\Bigr)^{i-j}X_n^j.
\end{split}\]
This is of the form
\[\beta_0+\beta_1x+aX_n^2+\sum_{j=2}^q \beta_j \cdot X_n^j\]
with
\[\beta_j=\begin{cases}
\displaystyle \alpha_0-\frac{\alpha_1^2}{4a}+\sum_{i=2}^q\alpha_i\cdot\Bigl(-\frac{\alpha_1}{2a}\Bigr)^i & \text{for $j=0$,}\\
\displaystyle \sum_{i=2}^qi\alpha_i\cdot\Bigl(-\frac{\alpha_1}{2a}\Bigr)^{i-1} & \text{for $j=1$,}\\
\displaystyle \sum_{i=j}^q\binom{i}{j}\alpha_i\cdot\Bigl(-\frac{\alpha_1}{2a}\Bigr)^{i-j} & \text{for $j\geq2$.}
\end{cases}\]
Note that $\alpha_0(0)=\alpha_1(0)=0$ implies $\beta_0(0)=0$, that $\alpha_1(0)=0$ implies $\beta_1(0)=0$, and that $\alpha_1(0)=\alpha_2(0)=0$ implies $\beta_2(0)=0$. 

We now show that $\order(\beta_1) >\order(\alpha_1)$ by inspecting each term in~$\beta_1$. The hypotheses $\alpha_1(0)=0$ and $\alpha_1\neq 0$ give that $1\leq\order(\alpha_1)<\infty$. For $i=2$, the hypothesis $\alpha_2(0)=0$ gives $\order(\alpha_2)>0$, so 
\[\order(\alpha_2\cdot \alpha_1) = \order(\alpha_2) + \order(\alpha_1)>\order(\alpha_1).\]
For $i\geq3$, we have $\order(\alpha_i\cdot \alpha_1^{i-1})\geq \order(\alpha_1^{i-1}) > \order(\alpha_1)$. Since $\order (\alpha_i\cdot \alpha_1^{i-1})>\order(\alpha_1)$ for all $i\geq2$ and $\beta_1=\sum_{i=2}^qi(-1/(2a))^{i-1}\alpha_i\cdot\alpha_1^{i-1}$, we have $\order(\beta_1) >\order(\alpha_1)$. 

Defining
\[ g= \beta_0 + \beta_1 x_n + ax_n^2+\sum_{i=2}^q \beta_i \cdot x_n^i, \]
Lemma~\ref{lemma:localringisom} and Theorem~\ref{thm:rightequivalence} give that $Q_0(\nabla f) \cong Q_0(\nabla g)$ and $\mu^{\mathbb{A}^1}_0 ( f ) = \mu^{\mathbb{A}^1}_0 (g)$.
\end{proof}

In general by repeatedly applying Lemma~\ref{lemma:increaseorder} we can make the order of~$\alpha_1$ sufficiently large that the term $\alpha_1\cdot x_n$ can be deleted, as the next lemma makes precise. 

\begin{lemma}\label{lemma:removelinearterm}
Let $k$ be a field of characteristic not~$2$, and let $f\colon\mathbb A^n_k \to \mathbb A_k^1$ be a polynomial map with $f(0)=0$ and $1\leq\mu_0(f)<\infty$. If $(f_{x_1}, \dots, f_{x_n})\not\subseteq (x_1, \dots, x_{n})^2$, then there exists a map $g\colon \mathbb A^{n}_k \to \mathbb A_k^1$ with $g(0)=0$ and $1\leq\mu_0(g)<\infty$ of the form 
\[g=\alpha_0+ax_n^2+\sum_{i=2}^q \alpha_i\cdot x_n^i,\]
for $a\in k^{\times}$ and $\alpha_i\in k[x_1, \dots, x_{n-1}]$, where $\order(\alpha_2)\geq1$, $Q_0(\nabla f)\cong Q_0(\nabla g)$, and $\mu^{\mathbb{A}^1}_0(f)=\mu^{\mathbb{A}^1}_0(g)$.
\end{lemma}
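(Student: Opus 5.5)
Since $f(0)=0$ and $\mu_0(f)\geq1$, all $f_{x_i}$ vanish at the origin, so $f$ has no linear terms. The hypothesis $(f_{x_1},\dots,f_{x_n})\not\subseteq\m^2$ then says that some $f_{x_i}$ has a nonzero linear part. Equivalently, the quadratic part $f_2$ of $f$ is a nonzero quadratic form. Because $\charac k\neq2$, $f_2$ can be diagonalized by an invertible linear change of coordinates $\Phi$ with $\Phi(0)=0$; its inverse is again linear. Choose $\Phi$ so that the coefficient of $x_n^2$ in $f\circ\Phi$ is some $a\in k^\times$. By Lemma~\ref{lemma:localringisom} and Theorem~\ref{thm:rightequivalence}, replacing $f$ by $f\circ\Phi$ changes neither $Q_0(\nabla f)$ nor $\mu_0^{\A1}(f)$.

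Now expand $f$ in powers of $x_n$:
\[f=\alpha_0+\alpha_1x_n+ax_n^2+\sum_{i=2}^q\alpha_ix_n^i,\]
with $\alpha_i\in k[x_1,\dots,x_{n-1}]$. Here $ax_n^2$ denotes the constant part of the $x_n^2$-coefficient, so the remaining $\alpha_2$ satisfies $\alpha_2(0)=0$. We also have $\alpha_0(0)=0$ since $f(0)=0$, and $\alpha_1(0)=0$ since $f$ has no linear terms. If $\alpha_1=0$ we are done. Otherwise, Lemma~\ref{lemma:increaseorder} produces a polynomial of the same shape with the same local algebra and $\A1$-Milnor number, with $\order(\beta_1)>\order(\alpha_1)$ and $\beta_0(0)=\beta_1(0)=\beta_2(0)=0$. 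Iterating, after finitely many steps (at most $r+1$, where $r=\mu_0(f)$) we reach a polynomial $h$ of this form whose linear coefficient $\gamma_1$ in $x_n$ is either zero or satisfies $\order(\gamma_1)\geq r+1$. At each step the Milnor number equals $r$, since the local algebras are isomorphic.

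In the second case $\gamma_1x_n\in\m^{r+2}$. Set $g=h-\gamma_1x_n$. Then $h-g\in\m^{r+2}$, and Lemma~\ref{lemma:ne} gives $Q_0(\nabla g)=Q_0(\nabla h)$ and $\mu_0^{\A1}(g)=\mu_0^{\A1}(h)$. In particular $1\leq\mu_0(g)=r<\infty$ and $g(0)=0$. The polynomial $g$ has the required form, with $\order(\alpha_2)\geq1$ coming from $\beta_2(0)=0$ being preserved at each step.

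The step needing the most care is the initial normalization. One must check that the linear change of variables really yields a nonzero constant coefficient of $x_n^2$. This holds because a nonzero quadratic form over a field of characteristic not $2$ is diagonalizable with some nonzero diagonal entry, and that entry can be placed at $x_n$. One must also check that Lemma~\ref{lemma:increaseorder}'s hypotheses ($\alpha_1\ne0$ and the vanishing at $0$) persist through the iteration. The lemma's conclusion supplies exactly these, except that $\beta_1$ might become $0$, which simply ends the process early.
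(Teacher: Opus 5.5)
Your proof is correct and is essentially the paper's argument: a linear change of coordinates produces a term $ax_n^2$, then Lemma~\ref{lemma:increaseorder} is applied repeatedly until the $x_n$-coefficient is zero or has order at least $\mu_0(f)+1$, and that term is deleted by Lemma~\ref{lemma:ne}. The only differences are cosmetic: you diagonalize the quadratic part, whereas the paper uses the shear $x_j\mapsto x_i+x_j$ when no square term is present, and your case $\alpha_1=0$ silently covers $n=1$ (there $\alpha_1\in k$ vanishes at $0$), which the paper treats separately.
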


\begin{proof}
Since $\mu_0(f)>0$, the partial derivatives $f_{x_1}, \dots, f_{x_n}$ have no constant terms. Since $(f_{x_1}, \dots, f_{x_n})\not\subseteq (x_1, \dots, x_{n})^2$, one of $f_{x_1}, \dots, f_{x_n}$ has a nonzero term of degree~$1$, so $f$ has a nonzero term of degree~$2$. Such a term must be of the form $ax_i^2$ or $ax_ix_j$. If $f$ does not contain a nonzero term $ax_i^2$ for some~$i$, then it must have a nonzero term $ax_ix_j$ for some $i\neq j$. In this case, by using a change of variables that replaces $x_j$ by $x_i+x_j$, we obtain a polynomial~$h_1$ that has a nonzero term $ax_i^2$, $Q_0(\nabla f)\cong Q_0(\nabla h_1)$, and $\mu^{\mathbb{A}^1}_0(f)=\mu^{\mathbb{A}^1}_0(h_1)$ by Theorem~\ref{thm:rightequivalence}. Replacing $f$ by~$h_1$, we can assume $f$ contains a term $ax_i^2$ for some~$i$ and $a\in k^{\times}$. Reindexing the variables, we can assume $f$ contains the term $a x^2_n$ for some $a\in k^{\times}$. Thus, we may assume 
\[f=\alpha_0+\alpha_1\cdot x_n+a x_n^2+\sum_{i=2}^q \alpha_i\cdot x_n^i\]
for some polynomials $\alpha_i\in k[x_1,\dots, x_{n-1}]$ with $\alpha_1(0)=\alpha_2(0)=0$ and $a\in k^{\times}$. If $n=1$, then $\alpha_1=0$, and we can let $g=f$ to obtain the result. The same holds if $n>1$ and $\alpha_1=0$. If $n>1$ and $\alpha_1\neq 0$, then the polynomial~$f$ satisfies the hypotheses of Lemma~\ref{lemma:increaseorder}. By repeatedly applying Lemma~\ref{lemma:increaseorder}, we can find a polynomial $h_2\in k[x_1, \dots, x_n]$ such that
\[h_2=\beta_0+\beta_1\cdot x_n+a x_n^2+\sum_{i=2}^q \beta_i\cdot x_n^i,\]
with $\beta_i\in k[x_1, \dots, x_{n-1}]$, $\order(\beta_1)\geq \mu_0(f) + 1$, $\beta_1(0)=\beta_2(0)=0$, $a\in k^{\times}$, $Q_0(\nabla f)\cong Q_0(\nabla h_2)$, and $\mu_0^{\A1}(f) = \mu_0^{\A1}(h_2)$. Then, as $\order( \beta_1\cdot x_n)\geq \mu_0(h_2)+2$, Lemma~\ref{lemma:ne} allows us to delete the term $\beta_1\cdot x_n$ from~$h_2$ without changing its local algebra or $\A1$-Milnor number. Since $\order(\beta_2)\geq 1$ as $\beta_2(0)=0$, the polynomial $g=\beta_0+ax_n^2+\sum_{i=2}^q \beta_i\cdot x_n^i$ fulfills the conditions of the theorem. 
\end{proof}

We are now ready to prove the main result of this section.

\begin{theorem}\label{thm:variablereduction}
Let $k$ be a field of characteristic not~$2$. Let $f\colon\mathbb A_k^n \to \mathbb A_k^1$ be a polynomial map with $f(0)=0$ and $1\leq\mu_0(f)<\infty$, and let $\mathfrak{m}=(x_1, \dots, x_{n})$. If $(f_{x_1}, \dots, f_{x_n})\nsubseteq \mathfrak{m}^2$, then there exists a map $g\colon \mathbb A_k^{n-1} \to \mathbb A_k^1$ with $g(0)=0$ and $1\leq\mu_0(g)<\infty$ such that 
\begin{enumerate}[label=\textup{(\alph*)},ref=\textup{\alph*}]\item $Q_0(\nabla f)\cong Q_0(\nabla g)$,\label{item:variablereductionlocalalgebra}
\item $\mu_0(f)=\mu_0(g)$, and \label{item:variablereductionMilnor}
\item $\mu^{\mathbb{A}^1}_0(f)$ and $\mu^{\mathbb{A}^1}_0(g)$ differ only by multiplication by a unit.\label{item:variablereductionA1Milnor} 
\end{enumerate}
\end{theorem}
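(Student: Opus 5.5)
First apply Lemma~\ref{lemma:removelinearterm} to replace $f$ by a polynomial of the form
\[h=\alpha_0+ax_n^2+\sum_{i=2}^q \alpha_i\cdot x_n^i,\qquad a\in k^{\times},\ \alpha_i\in k[x_1,\dots,x_{n-1}],\ \order(\alpha_2)\geq1,\]
with $Q_0(\nabla f)\cong Q_0(\nabla h)$ and $\mu_0^{\A1}(f)=\mu_0^{\A1}(h)$. Write $\mathbf{x}'=(x_1,\dots,x_{n-1})$. The key feature of $h$ is that every term involving $x_n$ is divisible by $x_n^2$. Hence $h_{x_n}=x_n\,u$ with $u=2a+\sum_{i\geq2}i\alpha_i x_n^{i-2}$. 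Here $u(0)=2a\neq0$ because $\alpha_2(0)=0$, so $u$ is a unit in the local ring. Consequently $x_n=0$ in $Q_0(\nabla h)$, and the ideal $(h_{x_1},\dots,h_{x_n})$ in the localization equals $(h_{x_1},\dots,h_{x_{n-1}},x_n)$.

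Take $g(\mathbf{x}')=h(\mathbf{x}',0)=\alpha_0$. Then $g(0)=0$. For $j<n$ we have $h_{x_j}=\partial_j\alpha_0+x_n^2(\cdots)$, so modulo $x_n$ the relations $h_{x_j}$ become $g_{x_j}$. This gives
\[Q_0(\nabla h)\cong k[\mathbf{x}',x_n]_{\m}/(g_{x_1},\dots,g_{x_{n-1}},x_n)\cong Q_0(\nabla g),\]
where the last isomorphism holds because localizing at $\m$ and then quotienting by $x_n$ gives the localization of $k[\mathbf{x}']$ at $(\mathbf{x}')$. This proves (a), and (b) follows by taking dimensions, so $1\leq\mu_0(g)<\infty$.

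For (c), compare distinguished socle elements. Choose an expansion $h_{x_j}=\sum_\ell x_\ell H_{j,\ell}$ as follows. For $j<n$, take $H_{j,\ell}=G_{j,\ell}$ for $\ell<n$, where $g_{x_j}=\sum_{\ell<n}x_\ell G_{j,\ell}$, and put all the $x_n^2(\cdots)$ terms into $H_{j,n}$. For the last row, take $H_{n,\ell}=0$ for $\ell<n$ and $H_{n,n}=u$. The matrix is then block upper triangular, so $E_0(\nabla h)=u\cdot\det(G_{j,\ell})$. Under the isomorphism above, $\det(G_{j,\ell})$ maps to $E_0(\nabla g)$, and $u$ maps to its image $\bar u=u(\mathbf{x}',0)$, a unit of $Q_0(\nabla g)$ with $\bar u(0)=2a$.

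Now transport $\eta_g$ to $\eta_h(v)=\eta_g(\bar u^{-1}\bar v)$. This satisfies $\eta_h(E_0(\nabla h))=1$, so the EKL form of $h$ is $(v,w)\mapsto\eta_g(\bar u^{-1}vw)$. This is the EKL form of $g$ twisted by the unit $\bar u^{-1}$, which is exactly the meaning of ``differs by multiplication by a unit.''

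I expect the main obstacle to be whether one wants the cleaner conclusion $\mu_0^{\A1}(f)=\langle 2a\rangle\mu_0^{\A1}(g)$. To get it, I would show that $\bar u$ can be taken to be the constant $2a$. One route is to use Lemma~\ref{lemma:ne} to remove the terms $\alpha_i x_n^i$ for $i\geq 2$ beyond $ax_n^2$, but that requires high order of $\alpha_i$ and is not available in general. The other route is to note that a unit $\bar u$ with $\bar u(0)=2a$ is a square times $2a$ in the Artinian local ring; this works because $\charac k\neq2$, by Hensel's lemma. Twisting by a square $c^2$ gives an isometric form via $v\mapsto cv$, which yields the sharper statement. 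Since the theorem only asks for (c) as stated, I will present the unit-twist argument and remark on the square-root refinement.
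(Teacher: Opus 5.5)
Your proposal is correct and follows the paper's proof step for step: reduce via Lemma~\ref{lemma:removelinearterm}, take $g=\alpha_0$, use $h_{x_n}=x_n u$ with $u$ a unit to identify the local algebras, and compare distinguished socle elements through the block upper triangular matrix with last row $(0,\dots,0,u)$. The one divergence is at that last entry. The paper asserts it equals $2a$ in $Q_0(\nabla f)$, whereas you correctly keep $\bar u=2a+2\alpha_2$ and reach $\langle 2a\rangle\cdot\mu_0^{\A1}(g)$ via a square root of $\bar u/(2a)$; since this is the form of (c) the paper intends, make that step part of the proof rather than a remark. A more direct route: $E_0(\nabla g)$ is annihilated by $x_1,\dots,x_{n-1}$ and $\alpha_2\in(x_1,\dots,x_{n-1})$, so $\bar u\,E_0(\nabla g)=2a\,E_0(\nabla g)$ outright.
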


\begin{proof}
By Lemma~\ref{lemma:removelinearterm}, we can assume $f$ has the form 
\[\beta_0+ax_n^2+\sum_{i=2}^q \beta_i\cdot x_n^i,\]
with $\beta_i\in k[x_1, \dots, x_{n-1}]$, $\order(\beta_2)\geq1$, and $a\in k^{\times}$. Let $g=\beta_0$. Differentiating, we have 
\[\begin{aligned}
f_{x_j} & =g_{x_j}+\sum_{i=2}^q \frac{\partial \beta_{i}}{\partial x_j} x_n^i\text{ for } j\in\{1,\dots,n-1\},\\
f_{x_n} & =2a x_n+\sum_{i=2}^q i\beta_i\cdot x_n^{i-1}=x_n\biggl(2a+\sum_{i=2}^{q} i\beta_i\cdot x_n^{i-2}\biggr).
\end{aligned}\]
As ideals in the local ring ring $k[x_1, \dots, x_n]_{\mathfrak{m}}$, we have
\[\begin{split}
(f_{x_1},\dots,f_{x_n}) & =\biggl(f_{x_1},\dots,f_{x_{n-1}},x_n\biggl(2a+\sum_{i=2}^{q} i\beta_i\cdot x_n^{i-2}\biggr)\biggr)\\
& =(f_{x_1},\dots,f_{x_{n-1}},x_n)\\
& = (g_{x_1},\dots,g_{x_{n-1}},x_n),\\
\end{split}\]
where the second equality holds because $2a+\sum_{i=2}^{q} i\beta_i\cdot x_n^{i-2}$ is a unit in $k[x_1, \dots, x_n]_{\mathfrak{m}}$ and the last equality holds because $f_{x_i}$ and $g_{x_i}$ are equal up to a multiple of~$x_n$ for all~$i$. We then have 
\[\begin{split}
Q_0(\nabla f) & =k[x_1,\dots,x_n]_{\mathfrak{m}}/(f_{x_1},\dots,f_{x_n})\\
& = k[x_1,\dots,x_n]_{\mathfrak{m}}/(g_{x_1},\dots,g_{x_{n-1}},x_n)\\
& \cong k[x_1,\dots,x_{n-1}]_{\mathfrak{m}'}/(g_{x_1},\dots,g_{x_{n-1}})\\
& = Q_0(\nabla g),
\end{split}\]
where $\mathfrak{m}'$ is the ideal $(x_1,\dots,x_{n-1})$ in $k[x_1,\dots,x_{n-1}]$, and we identify elements that are in both $Q_0(\nabla f)$ and $Q_0(\nabla g)$ as equal. This proves that $g$ satisfies~(\ref{item:variablereductionlocalalgebra}), which implies that it also satisfies~(\ref{item:variablereductionMilnor}).

For $j\in[n-1]$, if we have $g_{x_j}=\sum_{\ell=1}^{n-1} g_{j,\ell}\cdot x_\ell$, then we can write $f_{x_j}=\sum_{\ell=1}^{n} f_{j,\ell}\cdot x_\ell$ with $f_{j,\ell}=g_{j,\ell}$ for all $\ell\in[n-1]$ and $f_{j,n}=\sum_{i=2}^r \frac{\partial \beta_{i}}{\partial x_j}x_n^{i-1}$. We can also write $f_{x_n}=\sum_{\ell=1}^n f_{n,\ell}\cdot x_\ell$ with $f_{n,\ell}=0$ for all $\ell\in[n-1]$ and $f_{n,n}=2a+\sum_{i=2}^{q} i\beta_i\cdot x_n^{i-2}$. Since $\beta_2(0)=0$, we have that $f_{n,n}=2a$ in $Q_0(\nabla f)$. This means that 
\[\begin{split}
E_0(\nabla f)&=\begin{vmatrix}
g_{1,1}&\cdots &g_{1,n-1} & \sum_{i=2}^r \frac{\partial \beta_{i}}{\partial x_1}x_n^{i-1}\\
\vdots&\ddots&\vdots&\vdots\\
g_{n-1,1}&\cdots&g_{n-1,n-1}&\sum_{i=2}^r \frac{\partial \beta_{i}}{\partial x_{n-1}}x_n^{i-1}\\
0&\cdots&0&2a
\end{vmatrix}\\
&=2a\begin{vmatrix}
g_{1,1}&\cdots &g_{1,n-1}\\
\vdots&\ddots&\vdots\\
g_{n-1,1}&\cdots&g_{n-1,n-1}
\end{vmatrix}\\
&=2aE_0(\nabla g).
\end{split}\]

Let $\eta_g$ satisfy $\eta_g(E_0(\nabla g)) = 1$, and define $\eta_f(v) = \frac{1}{2a} \eta_g(v)$ for all $v \in Q_0(\nabla f)$. Then 
\[\eta_f( E_0(\nabla f) ) = \frac{1}{2a} \eta_g( 2a E_0(\nabla g)) = \eta_g( E_0(\nabla g)) = 1.\]
We can now compute the $\A1$-Milnor numbers of $f$ and~$g$ by using $\eta_f$ and~$\eta_g$, respectively, on the same set of basis elements. Each entry in the Gram matrix of $\mu_0^{\A1}(f)$ is exactly $1/(2a)$ times the corresponding entry of the Gram matrix of $\mu_0^{\A1}(g)$, so the $\A1$-Milnor numbers of $f$ and~$g$ are thus related by 
\[\mu^{\mathbb{A}^{1}}_0(f)=\Bigl\langle \frac{1}{2a} \Bigr\rangle \cdot\mu^{\mathbb{A}^{1}}_0(g).\]
Since $\langle 1/(2a)\rangle$ is a unit, this gives~(\ref{item:variablereductionA1Milnor}).
\end{proof}

\begin{example}\label{ex:running5}
We illustrate Theorem~\ref{thm:variablereduction} with the polynomial $f=(y+x^2)^2+x^3y+x^3y^2$. In Example~\ref{ex:running4}, $f$ was shown to have the same $\A1$-Milnor number as the polynomial $h(x,Y)$ in \eqref{eqn:hY}. The proof of Theorem~\ref{thm:variablereduction} produces the single-variable polynomial 
\[g=x^4-\frac{(2x^2+x^3)^2}{4}+\frac{(2x^2+x^3)^2}{4}x^3\]
with $Q_0(\nabla g) \cong Q_0(\nabla h)\cong Q_0(\nabla f)$. In this example, $g$ is obtained from $h=g + Y^2 + x^3 Y^2$, so $a=1$ in the proof of Theorem~\ref{thm:variablereduction}. The proof then gives $\mu^{\mathbb{A}^1}_0(f)= \langle1/2\rangle\cdot\mu^{\mathbb{A}^1}_0(g)$. Although we excluded $\charac k=5$ in Example~\ref{ex:running4}, a similar calculation when $\charac k=5$ gives the same~$g$.
\end{example}

Theorem~\ref{thm:variablereduction} is related to Corollary~\ref{cor:addterm}. Whereas Corollary~\ref{cor:addterm} finds the $\A1$-Milnor number when a new variable is added, Theorem~\ref{thm:variablereduction} shows that in some cases a variable can be eliminated while only changing the $\A1$-Milnor number by multiplication by a unit. In particular, the proof of Theorem~\ref{thm:variablereduction} gives $\mu_0^{\A1}(g+z^2) = \langle2\rangle\cdot\mu_0^{\A1}(g)$, which agrees with Corollary~\ref{cor:addterm}. However, $\mu_0^{\A1}(g+z^3) = \mu_0^{\A1}(g) \mu_0^{\A1}(z^3)$ by Proposition~\ref{prop:product} and thus is not a unit multiple of $\mu_0^{\A1}(g)$. Hence, a hypothesis such as $(f_{x_1}, \dots, f_{x_n})\nsubseteq \mathfrak{m}^2$ is needed in Theorem~\ref{thm:variablereduction}.

\section*{Acknowledgments}

We would like to thank Kyle Ormsby for introducing us to this topic in the 2021 Park City Mathematics Institute (PCMI) Undergraduate Faculty Program and Marc Levine, Stephen McKean, Jan Stevens, and Kirsten Wickelgren for helpful comments and suggestions. Much of this paper results from work supported by four programs. We would like to express our thanks to the 2023 Collaborate@ICERM program (NSF grant No.\ DMS-1929284) at the Institute for Computational and Experimental Research in Mathematics in Providence, RI, the 2024 PCMI Undergraduate Faculty Program, and the 2025 Summer Research in Mathematics program (NSF Grant No.\ DMS-1928930) and the 2026 Lasting Alliance Through Team Immersion and Collaborative Exploration (LATTICE) program (supported by the American Mathematical Society and the Sloan Foundation Grant G-2025-25245), both at the Simons Laufer Mathematical Sciences Institute in Berkeley, California. Tom Hagedorn received support from a TCNJ research award. Joel Louwsma was partially supported by two Niagara University Summer Research Awards. 

\bibliographystyle{amsplain}
\bibliography{A1MilnorNumbersSmallRank}

\end{document}